\DeclareMathAlphabet{\mathpzc}{OT1}{pzc}{m}{it}
\newtheorem{theorem}{Theorem}[section]
\newtheorem{proposition}[theorem]{Proposition}
\newtheorem{lemma}[theorem]{Lemma}
\newtheorem{definition}[theorem]{Definition}
\newtheorem{example}[theorem]{Example}
\newtheorem{remark}[theorem]{Remark}
\newtheorem{assumption}[theorem]{Assumption}
\numberwithin{equation}{section}
\numberwithin{theorem}{section}
\def\R{{\mathbb R}}
\def\cJ{{\mathcal J}}
\DeclareMathAlphabet\gothic{U}{euf}{m}{n}
\def\cV{{\mathcal V}}
\def\cW{{\mathcal W}}
\def\RR{\mathbb{R}}
\def\NN{\mathbb{N}}
\def\Om{\Omega}
\def\bOm{\overline{\Om}}
\def\pOm{\partial\Omega}
\def\sgn{\operatorname{sgn}}
\newcommand{\Zad}{Z_{\text ad}}
\def\eqnarray{\stepcounter{equation}\let\@currentlabel=\theequation
\global\@eqnswtrue
\tabskip\@centering\let\\=\@eqncr
$$\halign to \displaywidth\bgroup\hfil\global\@eqcnt\z@
  $\displaystyle\tabskip\z@{##}$&\global\@eqcnt\@ne
  \hfil$\displaystyle{{}##{}}$\hfil
  &\global\@eqcnt\tw@ $\displaystyle{##}$\hfil
  \tabskip\@centering&\llap{##}\tabskip\z@\cr}
\def\endeqnarray{\@@eqncr\egroup
      \global\advance\c@equation\m@ne$$\global\@ignoretrue}
\title{Optimal control of fractional semilinear PDEs}
\author{Harbir Antil}
\address{Department of Mathematical Sciences, George Mason University, Fairfax, VA 22030, USA.}
\email{hantil@gmu.edu}
\author{Mahamadi Warma}
\address{University of Puerto Rico  (Rio Piedras Campus), College of Natural Sciences,
Department of Mathematics, PO Box 70377 San Juan PR
00936-8377 (USA). }
\email{mahamadi.warma1@upr.edu, mjwarma@gmail.com}
\thanks{The work of the first author is partially supported by NSF grants DMS-1521590 and DMS-1818772 and Air Force Office of Scientific Research under Award NO: FA9550-19-1-0036.  The work of the second author is partially supported by the Air Force Office of Scientific Research under Award NO: FA9550-18-1-0242.}
\keywords{Integral and spectral fractional
 operators, semilinear PDEs, semilinear optimal control problems, optimal growth condition, regularity of weak solutions.}
\subjclass[2010]{
49J20, 
 49K20, 
 35S15, 
 26A33, 
 65R20, 
 65N30  
}
\begin{document}

\begin{abstract}
In this paper we consider the optimal control of semilinear fractional PDEs with both spectral and 
integral fractional diffusion operators of order $2s$ with $s \in (0,1)$. We first prove the boundedness 
of solutions to both semilinear fractional PDEs under minimal regularity assumptions on domain
and data. We next introduce an 
optimal growth condition on the nonlinearity to show the Lipschitz continuity of the solution map for 
the semilinear elliptic equations with respect to the data. 
We further apply our ideas to show existence of solutions to optimal control problems with semilinear fractional equations as constraints. Under the standard assumptions on the nonlinearity (twice continuously differentiable) we derive the first and second order optimality conditions.
\end{abstract}

\maketitle

\section{Introduction}

Fractional order operators have recently emerged as a modeling alternative in various branches of
science. Their success can be attributed to the fact that fractional order operators can capture sharp transitions across an interface. 
These are nonlocal operators that can model multiscale behavior. One such instance 
occurs in electrical signal propagation in cardiac tissue where the presence of 
fractional diffusion has been experimentally validated \cite{AOBueno_DKay_VGrau_BRodriquez_KBurrage_2014a}. There are several applications 
where the underlying models are semilinear and contain fractional diffusion operators: 
phase field models \cite{HAntil_SBartels_2017a}, fluid dynamics \cite{csm,MR1252829}, 
diffusion of biological species \cite{viswanathan1996levy} (see \cite{cantrell2007advection} for the local case). For completeness we also mention the role of fractional operators in imaging science \cite{HAntil_SBartels_2017a, HAntil_CNRautenberg_2018b}.
Even with the existing applications, the emphasis of growing literature on fractional diffusion equations has been 
largely limited to linear fractional PDEs. 
The purpose of this paper is to consider two prototypical semilinear fractional PDEs and the corresponding
optimal control problems.

Let $\Omega\subset\RR^N$ be an arbitrary bounded open set with boundary $\pOm$. In this paper we  investigate the well-posedness of the semilinear fractional equation
\begin{equation}\label{ellip-pro}
(L_D)^su +f(x,u)=z\quad \mbox{ in }\;\Omega,\\
\end{equation} 
and we also consider an optimal control problem 
\begin{equation}\label{cost}
\min_{z\in \Zad} J(u,z) := J_1(u) + J_2(z),
\end{equation}
subject to the state equation \eqref{ellip-pro} and the control constraints 
\begin{equation}\label{ctrl}
z\in \Zad :=\Big\{z\in L^\infty(\Omega) \;:\; z_a \le z \le z_b , \mbox{ a.e. in } \Omega \Big\} . 
\end{equation}
Here $z_a, z_b \in L^\infty(\Omega)$ with $z_a(x) \le z_b(x)$ for a.e. $x\in\Omega$. The precise conditions on $J_1$ and $J_2$ will be given in Section~\ref{s:red_prob}  and 
Remark~\ref{rem:non_smooth}.

In \eqref{ellip-pro}, $f:\Omega\times\RR\to\RR$ is measurable and satisfies certain conditions (that we shall specify later) and $(L_D)^s$ ($0<s<1$) denotes the fractional $s$ powers of the realization in $L^2(\Om)$ of the operator $L$ formally given by
\begin{align}\label{OPL}
Lu:=-\sum_{i,j=1}^ND_i\Big(a_{ij}(x)D_j u\Big),\;\;D_i:=\frac{\partial}{\partial x_i},
\end{align}
with the Dirichlet boundary condition $u=0$ on $\pOm$.  The coefficients $a_{ij}$ are assumed to be measurable, belong to $L^\infty(\Omega)$, are symmetric, that is,
\begin{align*}
a_{ij}(x)=a_{ji}(x)\;\;\forall\;i,j=1,\cdots,N \mbox{ and for  a.e. }  x\in\Omega,
\end{align*}
and satisfy the ellipticity condition, that is, there exists a constant $\gamma>0$ such that
\begin{align*}
\sum_{i,j=1}^Na_{ij}(x)\xi_i\xi_j\ge \gamma|\xi|^2,\;\;\forall\;\xi\in \RR^N\;\mbox{ and a.e. } x\in\Omega.
\end{align*}

Besides Equation \eqref{ellip-pro}, we also consider the following semilinear elliptic system
\begin{equation}\label{ellip-int-Frac}
\begin{cases}
(-\Delta)^su+f(x,u)=z\quad&\mbox{ in }\;\Omega,\\
u=0&\mbox{ in }\;\RR^N\setminus\Omega,
\end{cases}
\end{equation} 
where $(-\Delta)^s$ denotes the integral fractional Laplace operator (see Section \ref{sec-int-frac}), together with the optimal control problem \eqref{cost} and the control constraints \eqref{ctrl}.

Notice that both $(L_D)^s$ and $(-\Delta)^s$ are nonlocal operators if $0<s<1$ and $f$ is nonlinear with respect to $u$.  This makes it  challenging to identify the minimal assumptions on $\Omega$, $f$ and $z$ in the study of the existence, uniqueness, regularity and the numerical analysis of \eqref{ellip-pro} and \eqref{ellip-int-Frac}. The main contributions of this paper are summarized as follows:
\begin{enumerate}[$(i)$]
 \item We identify minimal conditions on $f$ without any regularity assumption on the domain $\Omega$ that lead to existence,
  uniqueness and boundedness of solutions to \eqref{ellip-pro} and \eqref{ellip-int-Frac}. Our main assumption reads that $f$ is monotone in the second variable and $f(x,t)\to\infty$ as $t\to\infty$.
  
 \item We introduce an optimal growth condition on $f$ (see \eqref{growth}) that allows us to prove the Lipschitz
  continuity of the solution map: $z\mapsto S(z) := u$. Usually, a local Lipschitz continuity on $f$ 
  is assumed in most of the literature. In absence of this Lipschitz continuity 
  we also prove existence of solution to \eqref{cost}. Our growth condition is not a regularity assumption on $f$ and therefore is weaker than a local Lipschitz continuity.

 \item We study the optimality conditions for the optimization problems and under 
  standard assumptions on $f$ we derive the second order sufficient conditions. 
  We notice that if the pair $(N,s)$ fulfills the assumptions of Lemma~\ref{lem:nodisp}, 
  we do not get the 2-norm discrepancy.
\end{enumerate}

We refer to \cite{bonnans2013perturbation,MR2583281,MR2902693,casas2012general}  
  and the references therein for the case $s = 1$ and \cite{HAntil_RKhatri_MWarma_2018a} for the exterior control of linear equation with integral fractional Laplacian. We emphasize that even though the 
  aforementioned list of references for $s=1$ is incomplete but these references provide a 
  comprehensive overview of the field of semilinear optimal control problems. 

To the best of our knowledge most of our results are new not only for the spectral case but
also the integral fractional case. We further notice that the results of 
$(ii)$ can be applied to the classical semilinear problems as well.
When $a_{ij}=\delta_{ij}$ where the latter denotes the Kronecker delta, we developed a complete analysis, including discretization, and error estimates, for \eqref{ellip-pro} 
in \cite{HAntil_JPfefferer_MWarma_2016a}. Such an error analysis can be directly applied to
\eqref{ellip-pro} under the usual assumptions on $\Omega$ and the coefficients $a_{ij}$.
By following the approach of \cite{HAntil_JPfefferer_MWarma_2016a} in conjunction with the
estimates for the linear problem \cite{GA:JP15} a similar error analysis can be developed for \eqref{ellip-int-Frac}.

In order to avoid repetition we will focus on the semilinear problem \eqref{ellip-pro} with the spectral fractional operator $(L_D)^s$. However, to prove our crucial results in (i) and (ii) we rely on an 
 integral representation of $(L_D)^s$ (cf.~\eqref{int-rep}). This integral representation is similar to the representation of the fractional operator $(-\Delta)^s$ (cf. Section~\ref{sec-int-frac})
and all the results discussed for $(L_D)^s$ directly transfer to $(-\Delta)^s$ under minor modifications. We refer to Section~\ref{sec-int-frac}, Remarks~\ref{new-rem}(b) and
\ref{rem:int_oc1} for more details. 

The remainder of the paper is organized as follows. In Section~\ref{preli-resul} we state some
preliminary results and introduce our function spaces. Our main work starts from 
Section~\ref{preli} where we first prove the existence of solutions to \eqref{ellip-pro}
in Sobolev spaces. We next show that the inverse of the solution operator is bounded and
continuous under the newly introduced growth condition in \eqref{growth}, we also study
the compactness of such an operator. We prove $L^\infty$-bound of solutions in Theorem~\ref{theo-bound}. We also derive an $L^\infty$-bound on the difference of two solutions $u_1,u_2$
corresponding to given $z_1,z_2$ in Proposition~\ref{pro-dif-sol} without any additional
assumptions on $f$. In Section \ref{sec-int-frac} we show that all our results also hold for the system \eqref{ellip-int-Frac} with very minor modifications in the proofs.
An example of $f$ is given in Section~\ref{s:ex}. We next prove
the existence of a solution to our optimal control problem in Section~\ref{s:red_prob} by just assuming the above mentioned growth condition
on $f$. Under additional regularity assumptions on $f$ with respect to the second variable, we derive the first order necessary and second order sufficient conditions
in Section~\ref{s:oc}. 

\section{Notation and Preliminary results}\label{preli-resul}

Throughout this section without any mention, $\Omega\subset\RR^N$ ($N\ge 1$) denotes an arbitrary bounded open set with boundary $\pOm$.  For each result, if a regularity of $\Omega$ is needed, then we shall specify and if no specification is given, then we mean that the result holds without any regularity assumption on the open set.

\subsection{Fractional order Sobolev spaces}

Let $H_0^{1}(\Omega)=\overline{\mathcal D(\Omega)}^{H^{1}(\Omega)}$ where
$\mathcal D(\Omega)$ is the space of infinitely continuously differentiable functions with compact support in $\Omega$, and
\begin{align*}
H^{1}(\Omega)=\Big\{u\in L^2(\Omega):\;\int_{\Omega}|\nabla u|^2\;dx<\infty\Big\}
\end{align*}
is the first order Sobolev space endowed with the norm
\begin{align*}
\|u\|_{H^{1}(\Omega)}=\left(\int_{\Omega}|u|^2\;dx+\int_{\Omega}|\nabla u|^2\;dx\right)^{\frac 12}.
\end{align*}

Next, for $0<s<1$,  we define the fractional order Sobolev space
\begin{align*}
H^s(\Omega):=\left\{u\in L^2(\Omega):\; \int_{\Omega}\int_{\Omega}\frac{|u(x)-u(y)|^2}{|x-y|^{N+2s}}\;dxdy<\infty\right\},
\end{align*}
and we endow it with the norm defined by
\begin{align*}
\|u\|_{H^s(\Omega)}=\left(\int_{\Omega}|u|^2\;dx+\int_{\Omega}\int_{\Omega}\frac{|u(x)-u(y)|^2}{|x-y|^{N+2s}}\;dxdy\right)^{\frac 12}.
\end{align*}
We also let 
\begin{align*}
H_0^s(\Omega):=\overline{\mathcal D(\Omega)}^{H^s(\Omega)},
\end{align*}
and
\begin{align*}
H_{00}^{\frac 12}(\Omega):=\left\{u\in H^{\frac 12}(\Omega):\;\int_{\Omega}\frac{u^2(x)}{\mbox{dist}(x,\pOm)}\;dx<\infty\right\}.
\end{align*}
Note that
\begin{align}\label{norm-sob-es}
\|u\|_{H_0^s(\Omega)}=\left(\int_{\Omega}\int_{\Omega}\frac{|u(x)-u(y)|^2}{|x-y|^{N+2s}}\;dxdy\right)^{\frac 12}
\end{align}
defines a norm on $H_0^s(\Omega)$ if $\frac 12<s<1$. 
\begin{remark}\label{rem-sob}
{\em It is well-known (see e.g. \cite[Theorem 1.4.2.4 p.25]{Gris}) that if $\Omega$ has a Lipschitz continuous  boundary, then $H^s(\Omega)=H_0^s(\Omega)$ if and only if $0<s\le 1/2$. If $1/2<s<1$, then $H_0^s(\Omega)$ is a proper closed subspace of $H^s(\Omega)$. In particular, we also have that $H_{00}^{1/2}(\Omega)\subsetneqq H_0^{1/2}(\Omega)=H^{1/2}(\Omega)$. A complete description of this fact for arbitrary bounded open sets is contained in \cite{War}. }
\end{remark}
The fractional order Sobolev spaces can be also defined by using interpolation theory. That is, for every $0<s<1$,
\begin{align*}
H^s(\Omega)=[H^1(\Omega),L^2(\Omega)]_{1-s},
\end{align*}
and for every $s\in (0,1)$ we have that

\begin{align*}
H_0^s(\Omega)=[H_0^1(\Omega),L^2(\Omega)]_{1-s}\;\mbox{ if }\; s\in (0,1)\setminus\{1/2\}\;\mbox{ and }\; H_{00}^{\frac 12}(\Omega)=[H_0^1(\Omega),L^2(\Omega)]_{\frac 12}.
\end{align*}
Here, for $0<\theta<1$, $[\cdot,\cdot]_\theta$ denotes the complex interpolation space.

Since $\Omega$ is assumed to be bounded we have the following continuous embedding:
\begin{equation}\label{inj1}
H_0^s(\Omega)\hookrightarrow
\begin{cases}
L^{\frac{2N}{N-2s}}(\Omega)\;\;&\mbox{ if }\; N>2s,\\
L^p(\Omega),\;\;p\in[1,\infty)\;\;&\mbox{ if }\; N=2s,\\
C^{0,s-\frac{N}{2}}(\bOm)\;\;&\mbox{ if }\; N<2s.
\end{cases}
\end{equation}

We notice that if $N\ge 2$, then $N\ge 2>2s$ for every $0<s<1$, or if $N=1$ and $0<s<\frac 12$, then $N=1>2s$, and thus the first embedding in \eqref{inj1} will be used.  If $N=1$ and $s=\frac 12$, then we will use the second embedding. Finally, if $N=1$ and $\frac 12<s<1$, then $N=1<2s$ and hence, the last embedding will be used. 

For more details on fractional order Sobolev spaces we refer the reader to  \cite{Adam,NPV,Gris,LM,War} and their references.

\subsection{The fractional powers of the elliptic operator}\label{s:spec}

Let $L_D$ be the realization in $L^2(\Omega)$ of  $L$ given in \eqref{OPL} with the boundary condition $u=0$ on $\pOm$. That is, $L_D$ is the positive and self-adjoint operator on $L^2(\Omega)$ associated with the closed and bilinear  form

\begin{align*}
\mathcal E_D(u,v)=\sum_{i,j=1}^N\int_{\Omega}a_{ij}(x)D_i u D_j v\;dx,\;\;u,v\in H_0^{1}(\Omega),
\end{align*}
in the sense that
\begin{equation*}
\begin{cases}
D(L_D)=\left\{u\in H_0^{1}(\Omega):\;\exists\;w\in L^2(\Omega),\; \mathcal E_D(u,v)=(w,v)_{L^2(\Omega)}\;\forall\;v\in H_0^{1}(\Omega)\right\},\\
L_Du=w.
\end{cases}
\end{equation*}
It is well-known that $L_D$ has a compact resolvent and its eigenvalues form a non-decreasing sequence $0<\lambda_1\le\lambda_2\le\cdots\le\lambda_n\le\cdots$ of real numbers satisfying $\lim_{n\to\infty}\lambda_n=\infty$. We denote by $(\varphi_n)_{n\in\NN}$ the  orthonormal basis of eigenfunctions associated with $(\lambda_n)_{n\in\NN}$.

For any $\theta\geq0$, we also introduce the following fractional order Sobolev space:
\begin{align*}
\mathbb H^\theta(\Omega):=\left\{u=\sum_{n=1}^\infty u_n\varphi_n\in L^2(\Omega):\;\;\|u\|_{\mathbb H^\theta(\Omega)}^2:=\sum_{n=1}^\infty \lambda_n^\theta u_n^2<\infty\right\},
\end{align*}
where $u_n=(u,\varphi_n)_{L^2(\Omega)}=\int_{\Omega}u\varphi_n\;dx$.
If $0<s<1$, then it is well-known that
\begin{equation}\label{inf}
\mathbb H^s(\Omega)=
\begin{cases}
H_0^s(\Omega)\;\;\;&\mbox{ if }\; s\ne \frac 12,\\
H_{00}^{\frac 12}(\Omega)\;\;&\mbox{ if }\; s=\frac 12.
\end{cases}
\end{equation}
It follows from \eqref{inf} that the embedding \eqref{inj1} holds with $H_0^s(\Omega)$ replaced by $\mathbb H^s(\Omega)$.

\begin{definition}
Let $0<s<1$.
The spectral fractional $s$ powers of $L_D$  is defined on $\mathbb H^s(\Omega)$ by
\begin{align*}
(L_D)^su:=\sum_{n=1}^\infty\lambda_n^su_n\varphi_n\qquad \text{ with } \quad u_n=\int_{\Omega}u\varphi_n\;dx.
\end{align*}
\end{definition}

We notice that in this case we have
\begin{align}\label{norm-2}
\|u\|_{\mathbb H^s(\Omega)}=\|(L_D)^{\frac s2}u\|_{L^2(\Omega)}.
\end{align}
In addition $\mathcal D(\Omega)\hookrightarrow \mathbb H^s(\Omega)\hookrightarrow L^2(\Omega)$, so, the operator $(L_D)^s$ is unbounded, densely defined and with bounded inverse $(L_D)^{-s}$ in $L^2(\Omega)$. But it can also be viewed as a bounded operator from $\mathbb H^s(\Omega)$ into its dual $\mathbb H^{-s}(\Omega):=(\mathbb H^s(\Omega))^{\star}$. 
The following integral representation of $(L_D)^s$ given in \cite[Theorem 2.3]{CaSt} will be useful. For every $u,v\in\mathbb H^s(\Omega)$, we have that
\begin{align}\label{int-rep}
\langle(L_D)^su,v\rangle_{\mathbb H^{-s}(\Omega),\mathbb H^s(\Omega)}=\frac 12\int_{\Omega}\int_{\Omega}\Big(u(x)-u(y)\Big)\Big(v(x)-v(y)\Big)K_s^L(x,y)\;dxdy
+\int_{\Omega}\kappa_s(x)u(x)v(x)\;dx,
\end{align}
where

\begin{align*}
0\le K_s^L(x,y):=\frac{s}{\Gamma(1-s)}\int_0^\infty\frac{W_\Omega^L(t,x,y)}{t^{1+s}}\;dt,\;\;x,y\in\Omega,
\end{align*}
and 
\begin{align*}
0\le \kappa_s(x)=\frac{s}{\Gamma(1-s)}\int_0^\infty\left(1-e^{-tL_D}1(x)\right)\frac{dt}{t^{1+s}},\;x\in\Omega.
\end{align*}
Here, $\Gamma$ is the usual Gamma function, $(e^{-tL_D})_{t\ge 0}$ denotes the strongly continuous semigroup on $L^2(\Omega)$ generated by $-L_D$ and $W_{\Omega}^L$ is the associated heat kernel, that is,
\begin{align*}
W_\Omega^L(t,x,y)=\sum_{n=1}^\infty e^{-t\lambda_n}\varphi_n(x)\varphi_n(y),\;\;t>0,\; x,y\in\Omega.
\end{align*}

From the representation \eqref{int-rep} we immediately see that $(L_D)^s$ is a nonlocal operator. We also notice that the case of fractional powers of elliptic operators with non-zero boundary conditions has been investigated in \cite{antil2017fractional}.

For more details on fractional powers of more general operators we refer the reader to \cite{NALD,ATW,CaSt,Gru2,ST:10} and the references therein.

\subsection{Some results on Orlicz spaces}\label{s:orl}

Here we give some important properties of Orlicz type spaces that will be used throughout the paper. These results, with the exception of Remark \ref{rem-28}, are the same as the ones stated in \cite[Section 2.2]{HAntil_JPfefferer_MWarma_2016a}. We refer to \cite{HAntil_JPfefferer_MWarma_2016a} and the references therein for more details.

\begin{assumption}\label{assum2}
  For a function
  $f:\Om\times\RR\to\RR$ we consider the following assumption:
  \begin{equation*}
     \begin{cases}
        f(x,\cdot) \text{ is odd, strictly increasing}&\text{ for a.e. } x\in\Omega,\\
        f(x,0)=0 &\text{ for a.e. } x\in \Omega,\\
        f(x,\cdot) \text{ is continuous }\;&\text{ for a.e. } x\in \Omega,\\
        f(\cdot,t)  \text{ is measurable }&\mbox{ for all } t\in\RR,\\
       \displaystyle \lim_{t\to\infty}f(x,t)=\infty &\text{ for a.e. } x\in \Omega.
      \end{cases}
  \end{equation*}
 \end{assumption}
 
  Let $\widetilde{f}(x,\cdot)$ be the inverse of $f(x,\cdot)$. Define
  $F,\widetilde{F}:\;\Omega\times\RR\to[0,\infty)$  by
  \begin{align}\label{func-F}
 F(x,t):=\int_0^{|t|}f(x,\tau)\;d\tau\;\mbox{ and  }\;
  \widetilde{F}(x,t):=\int_0^{|t|}\widetilde{f}(x,\tau)\;d\tau,\;\forall\;t\in\RR\;\mbox{ and for a.e. }x\in\Omega.
  \end{align}
  
\begin{assumption}\label{assum3}
  Under the setting of Assumption \ref{assum2}, and 
  for a.e. $x\in\Omega$, let both $F(x,\cdot)$ and $\widetilde F(x,\cdot)$ satisfy
  the global $(\triangle_2)$-condition,  
  that is, there exist two constants $c_1,c_2\in (0,1]$ independent of $x$, such that for a.e. $x\in\Omega$ and for all $t\in\RR$,
  \begin{align}\label{delta-2}
  c_1tf(x,t)\le F(x,t)\le tf(x,t)\;\mbox{ and }\; c_2t\widetilde f(x,t)\le \widetilde F(x,t)\le t\widetilde f(x,t).
  \end{align}
  \end{assumption}
  
Let
   \begin{align*}
   L_F(\Om):=\Big\{u:\Omega\to\RR\text{ measurable}: F(\cdot,u(\cdot))\in L^1(\Omega)\Big\}
  \end{align*}
  be the Musielak-Orlicz space.
  The space $L_{\widetilde F}(\Om)$ is defined similarly with $F$ replaced by $\widetilde F$.

\begin{remark}
{\em
  If Assumption \ref{assum3} holds, then by
  \cite[Theorems 1 and 2]{Doman} (see also \cite[Theorem 8.19]{Adam}),
  $L_F(\Omega)$ endowed with the Luxemburg norm given by
   \begin{align*}
\norm{u}_{F,\Omega}:=\inf\left\{k>0:\;\int_{\Omega}F \left(x,\frac{u(x)}{k}\right)\;dx\le 1\right\},
 \end{align*}
  is a reflexive Banach space. The same result also holds for $L_{\widetilde F}(\Omega)$.
 }
\end{remark}

We have the following result.

\begin{lemma}\label{lem:hoelder}{\bf\cite[Lemma 1.5]{HAntil_JPfefferer_MWarma_2016a}}
Let Assumption \ref{assum3} hold. Then  $f(\cdot,u(\cdot))\in L_{\widetilde F}(\Om)$ for all $u\in L_F(\Omega)$.
\end{lemma}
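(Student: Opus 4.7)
The plan is to establish the pointwise inequality
\[
\widetilde F(x, f(x, u(x))) \le \frac{1}{c_1} F(x, u(x)) \qquad \text{for a.e. } x \in \Omega,
\]
and then integrate against the assumption $u \in L_F(\Omega)$. The only ingredients I would need are the inverse relation $\widetilde f(x,\cdot) = f(x,\cdot)^{-1}$ coming from the definition in \eqref{func-F} and the two-sided $(\triangle_2)$-bounds in \eqref{delta-2}. Measurability of $f(\cdot, u(\cdot))$ is automatic from Assumption \ref{assum2}, since $f$ is a Carathéodory function and $u$ is measurable.

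The three-step chain I would use for the pointwise estimate is as follows. First, apply the right-hand inequality in \eqref{delta-2} to $\widetilde F$ with the argument $t$ replaced by $f(x, u(x))$:
\[
\widetilde F\bigl(x, f(x, u(x))\bigr) \;\le\; f(x, u(x))\,\widetilde f\bigl(x, f(x, u(x))\bigr).
\]
Second, since $\widetilde f(x, \cdot)$ is by construction the inverse of the odd, strictly increasing map $f(x, \cdot)$, we have $\widetilde f(x, f(x, u(x))) = u(x)$, so the right-hand side simplifies to $u(x)\,f(x, u(x))$. Third, apply the left-hand inequality in \eqref{delta-2}, namely $c_1\, t f(x,t) \le F(x,t)$, at $t = u(x)$ to obtain
\[
u(x)\,f(x, u(x)) \;\le\; \frac{1}{c_1}\,F(x, u(x)).
\]
Chaining these three bounds yields the desired pointwise inequality, and integration then gives $\int_\Omega \widetilde F(x, f(x, u(x)))\,dx \le c_1^{-1}\|F(\cdot, u(\cdot))\|_{L^1(\Omega)} < \infty$, which is exactly the definition of $f(\cdot, u(\cdot)) \in L_{\widetilde F}(\Omega)$.

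The only mild subtlety, which does not rise to the level of a real obstacle, is that the inequalities in \eqref{delta-2} are written without absolute-value symbols while $F(x, \cdot)$ and $\widetilde F(x, \cdot)$ depend only on $|t|$. This is harmless because $f(x,\cdot)$ and $\widetilde f(x,\cdot)$ are both odd (the former by hypothesis, the latter as the inverse of an odd function), so $t f(x,t)$ and $t \widetilde f(x,t)$ are nonnegative even functions of $t$; flipping the sign of $u(x)$ changes nothing in the inequality, and one may as well reduce to the case $u(x) \ge 0$ pointwise. No nontrivial functional-analytic machinery is required beyond what is already collected before the statement.
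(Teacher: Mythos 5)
Your proof is correct: the pointwise chain $\widetilde F\bigl(x,f(x,u(x))\bigr)\le f(x,u(x))\,\widetilde f\bigl(x,f(x,u(x))\bigr)=u(x)\,f(x,u(x))\le c_1^{-1}F(x,u(x))$ is legitimate under Assumptions \ref{assum2} and \ref{assum3} (the inverse $\widetilde f(x,\cdot)$ is globally defined and odd because $f(x,\cdot)$ is continuous, strictly increasing, odd and tends to $\infty$), and integrating against $F(\cdot,u(\cdot))\in L^1(\Omega)$ gives exactly $f(\cdot,u(\cdot))\in L_{\widetilde F}(\Omega)$. The paper does not reprove this lemma but cites \cite[Lemma 1.5]{HAntil_JPfefferer_MWarma_2016a}, and your argument is essentially that standard one, equivalently obtained from Young's equality $F(x,t)+\widetilde F(x,f(x,t))=t\,f(x,t)$ combined with the left-hand bound in \eqref{delta-2}.
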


\begin{definition}
Let $0<s<1$.  Under Assumption \ref{assum3} we can define the Banach space
  $\cV_0$  by
\begin{align*}
\cV_0=\cV_0(\Omega,F):=\Big\{u\in \mathbb H^{s}(\Om): F(\cdot,u(\cdot))\in L^1(\Om)\Big\},
\end{align*}
  and we endow it with the norm defined by
 \begin{align*}
\norm{u}_{\cV_0}:=\norm{u}_{\mathbb H^{s}(\Om)}+\norm{u}_{F,\Om}.
 \end{align*}
\end{definition}

  In this case $\cV_0$ is a reflexive Banach space. 
  It follows from \eqref{inj1} that we have the continuous embedding 
  \begin{align}\label{sobo1}
  \cV_0\hookrightarrow \mathbb H^{s}(\Om)\hookrightarrow L^{2^\star}(\Omega),
  \end{align}
where we have set
\begin{align*}
2^\star=\frac{2N}{N-2s}\;\mbox{ if }\; N\ge 2>2s\;\mbox{ or if }\,N=1\;\mbox{ and }\; 0<s<\frac 12.
\end{align*}
 If $N=1$ and $s=\frac 12$, then $2^{\star}$ is any number in the interval $[1,\infty)$. If $N=1$ and $\frac 12<s<1$, then we have the continuous embedding
\begin{align}\label{sobo2}
\cV_0\hookrightarrow \mathbb H^{s}(\Om)\hookrightarrow C^{0,s-\frac 12}(\bOm).
\end{align}

We refer to \cite{Adam,Bie,RR} and their references for further properties of Orlicz type spaces.

We conclude this section with the following observation.
\begin{remark}\label{rem-28}
{\em In Assumption \ref{assum2},  the assumption that $f$ is odd can be removed. In that case, we let
\begin{align*}
\Lambda(x,t):=\int_0^tf(x,\tau)\;d\tau\;\;\mbox{ and }\;\widetilde{\Lambda}(x,t):=\int_0^t\widetilde f(x,\tau)\;d\tau,\;\;\forall\;t\in\RR\;\mbox{ and for a.e. }x\in\Omega, 
\end{align*}
and one replaces $F$ and $\widetilde F$ in \eqref{func-F} by the following: for $x\in\Omega$ fixed, we define
\begin{align}\label{New-F}
F(x,t):=\max\{\Lambda(x,t),\Lambda(x,-t)\}\;\;\mbox{ and }\; \widetilde F(x,t):=\max\{\widetilde{\Lambda}(x,t),\widetilde{\Lambda}(x,-t)\},\;\;\forall\;t\in\RR.
\end{align}
By definition, we have that $F,\widetilde F:\Omega\times\RR\to[0,\infty)$, $F(x,\cdot),\widetilde F(x,\cdot)$ are even functions (for a.e. $x\in\Omega$) as the ones in \eqref{func-F}.
Assuming that these functions satisfy Assumption \ref{assum3}, then all the results in the paper remain true without any modification in the proofs. Of course if $f(x,\cdot)$ is odd, then $F$ and $\widetilde F$ defined in \eqref{New-F} coincide with the ones given in \eqref{func-F}. We have chosen the representation \eqref{func-F} only for simplicity.}
\end{remark}

\section{Analysis of the semilinear elliptic problem}\label{preli}

In this section we give some existence, uniqueness and regularity results of weak solutions to the problems \eqref{ellip-pro} and \eqref{ellip-int-Frac}. We also introduce an optimal growth condition on the nonlinearity $f$ which leads to the Lipschitz continuity of the solution map.

\subsection{Existence of weak solutions}\label{sec:weaksol}

We shall denote by $(\cV_0)^\star=(\mathbb H^{s}(\Omega)\cap L_F(\Omega))^\star$ the dual of the reflexive Banach spaces $\cV_0$. Throughout the remainder of the paper,  given a reflexive Banach space $\mathbb X$ and its dual $\mathbb X^\star$, we shall denote by $\langle\cdot,\cdot\rangle_{\mathbb X^\star,\mathbb X}$ their duality map. Now we can introduce our notion of weak solution to \eqref{ellip-pro}.

\begin{definition}
 A  $u\in \cV_0$ is said to be a weak solution of  \eqref{ellip-pro} if the identity
\begin{align}\label{form-ws}
\mathcal F_D(u,v):=\int_{\Omega}(L_D)^{\frac s2}u(L_D)^{\frac s2}v\;dx+\int_{\Omega}f(x,u)v\;dx=\langle z,v\rangle_{(\cV_0)^\star,\mathcal V_0},
\end{align}
holds for every $v\in \cV_0$ and the right hand side of \eqref{form-ws} makes sense.
\end{definition}

We have the following result of existence and uniqueness of weak solution.

\begin{proposition}[\bf Existence of weak solution]\label{prop-exis}
Let Assumption \ref{assum3} hold.  Then for every $z\in (\cV_0)^\star$, \eqref{ellip-pro} has a unique weak solution $u$. In addition, if $z\in \mathbb H^{-s}(\Omega)\hookrightarrow (\mathcal V_0)^\star$, then there is a constant $C>0$ such that 
\begin{equation}\label{nor-est}
\|u\|_{\mathbb H^s(\Omega)}\le C\|z\|_{\mathbb H^{-s}(\Omega)}.
\end{equation}
\end{proposition}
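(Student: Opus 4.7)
The natural approach is the direct method of the calculus of variations applied to the energy functional
\begin{equation*}
J(u) := \tfrac12\|(L_D)^{s/2}u\|_{L^2(\Omega)}^2 + \int_\Omega F(x,u(x))\,dx - \langle z,u\rangle_{(\mathcal V_0)^\star,\mathcal V_0},\qquad u\in\mathcal V_0.
\end{equation*}
By \eqref{norm-2} the first term equals $\tfrac12\|u\|_{\mathbb H^s(\Omega)}^2$, so it is finite on $\mathcal V_0$; the second is finite by the very definition of $\mathcal V_0$; and the third is finite since $z\in(\mathcal V_0)^\star$. Since $f(x,\cdot)$ is strictly increasing with $f(x,0)=0$, the map $t\mapsto F(x,t)$ is strictly convex, so $J$ is strictly convex on $\mathcal V_0$, and uniqueness of a minimizer will follow automatically.

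First I would verify weak lower semicontinuity: the quadratic term is convex and continuous on $\mathbb H^s(\Omega)$, hence weakly l.s.c.; the functional $u\mapsto\int_\Omega F(x,u)\,dx$ is convex on $L_F(\Omega)$ and, using the $(\triangle_2)$-condition from Assumption~\ref{assum3}, is strongly continuous there, hence weakly l.s.c.; finally $u\mapsto\langle z,u\rangle$ is weakly continuous. Next I would establish coercivity on $\mathcal V_0$: the quadratic term controls $\|u\|_{\mathbb H^s(\Omega)}^2$, while the left inequality $c_1 t f(x,t)\le F(x,t)$ in \eqref{delta-2} together with Lemma~\ref{lem:hoelder} and a standard argument for Luxemburg norms (comparing $\int F(x,u/\|u\|_{F,\Omega})\,dx$ with the modular of $u$) gives $\int_\Omega F(x,u)\,dx\to\infty$ whenever $\|u\|_{F,\Omega}\to\infty$, in fact with superlinear growth. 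Combining this with the bound $|\langle z,u\rangle|\le\|z\|_{(\mathcal V_0)^\star}(\|u\|_{\mathbb H^s(\Omega)}+\|u\|_{F,\Omega})$ yields $J(u)\to+\infty$ as $\|u\|_{\mathcal V_0}\to\infty$. Since $\mathcal V_0$ is reflexive, a minimizer $u\in\mathcal V_0$ exists and is unique.

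Writing out the Euler--Lagrange equation for $J$ and using Lemma~\ref{lem:hoelder} (so that $f(\cdot,u)v\in L^1(\Omega)$ for every $v\in\mathcal V_0$ via the Hölder-type inequality between $L_F$ and $L_{\widetilde F}$) gives exactly the weak formulation \eqref{form-ws} for all test functions $v\in\mathcal V_0$. For the a priori bound when $z\in\mathbb H^{-s}(\Omega)\hookrightarrow(\mathcal V_0)^\star$, I would test \eqref{form-ws} against $v=u\in\mathcal V_0$; since $f(x,\cdot)$ is odd and strictly increasing with $f(x,0)=0$, we have $f(x,u)u\ge 0$ a.e., so
\begin{equation*}
\|u\|_{\mathbb H^s(\Omega)}^2=\|(L_D)^{s/2}u\|_{L^2(\Omega)}^2\le \mathcal F_D(u,u)=\langle z,u\rangle_{\mathbb H^{-s}(\Omega),\mathbb H^s(\Omega)}\le \|z\|_{\mathbb H^{-s}(\Omega)}\|u\|_{\mathbb H^s(\Omega)},
\end{equation*}
which gives \eqref{nor-est}.

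The main obstacle is the coercivity and weak lower semicontinuity of the Orlicz term $\int_\Omega F(x,u)\,dx$ on $\mathcal V_0$: one must use the two-sided control from Assumption~\ref{assum3} to compare the modular with the Luxemburg norm, and to ensure that $f(\cdot,u)$ pairs properly with $v\in\mathcal V_0\hookrightarrow L_F(\Omega)$ via $L_{\widetilde F}$. An equivalent route (which may be cleaner to write down) is to observe that the operator $A(u):=(L_D)^s u+f(\cdot,u)\colon\mathcal V_0\to(\mathcal V_0)^\star$ is monotone, coercive and hemicontinuous, and then invoke the Browder--Minty theorem to conclude existence and uniqueness in one step.
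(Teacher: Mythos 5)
Your argument is correct, but it takes a different route from the paper. The paper omits the proof and refers to \cite[Proposition~2.8]{HAntil_JPfefferer_MWarma_2016a}, whose argument is the monotone-operator one you only mention at the end: one shows that $A_F:\cV_0\to(\cV_0)^\star$ defined by $\langle A_F(u),v\rangle=\mathcal F_D(u,v)$ is hemicontinuous, strictly monotone, bounded and coercive, and invokes Browder--Minty; this is exactly why Remark~\ref{op-rem} and the coercivity of $\mathcal F_D$ quoted in the proof of Proposition~\ref{prof:growth} are available ``from the proof'' of Proposition~\ref{prop-exis}. Your primary route, the direct method applied to $J(u)=\tfrac12\|u\|_{\mathbb H^s(\Omega)}^2+\int_\Omega F(x,u)\,dx-\langle z,u\rangle$, works and has the advantage that strict convexity gives uniqueness of the minimizer for free, but it carries the extra burden of differentiating the modular (your appeal to Lemma~\ref{lem:hoelder} plus monotone difference quotients is the right fix) and of coercivity: note that the superlinear growth of $\int_\Omega F(x,u)\,dx$ in $\|u\|_{F,\Omega}$, which you genuinely need since $\rho_F(u)\ge\|u\|_{F,\Omega}$ alone cannot beat $\|z\|_{(\cV_0)^\star}\|u\|_{F,\Omega}$, comes from the $\widetilde F$-half of \eqref{delta-2} (the $\nabla_2$-condition on $F$), not from the inequality $c_1tf(x,t)\le F(x,t)$ that you cite. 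Also, strict convexity gives uniqueness of minimizers; to get uniqueness of weak solutions one should either observe that every weak solution of \eqref{form-ws} is a critical point of the convex functional $J$ and hence a global minimizer, or test the difference of two solutions and use the strict monotonicity of $f(x,\cdot)$, as in Proposition~\ref{pro-34}. Your final step for \eqref{nor-est}, testing with $v=u$ and using $f(x,u)u\ge0$, is exactly the standard argument and matches the paper's estimates.
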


\begin{proof}
The proof follows the lines of the case $L=-\Delta$ contained in \cite[Proposition 2.8]{HAntil_JPfefferer_MWarma_2016a}. We omit it for brevity.
\end{proof}

The following result gives further estimates for the difference of two solutions.

\begin{proposition} \label{pro-34}
Let Assumption \ref{assum3} hold. Let $z_1,z_2\in \mathbb H^{-s}(\Omega)\hookrightarrow (\mathcal V_0)^\star$ and $u_1,u_2\in\mathcal V_0$ be the corresponding weak solutions of \eqref{ellip-pro}. Then there is a constant $C=C(N,s,\Omega)>0$ such that
\begin{align}\label{EST1}
C\|u_1-u_2\|_{L^2(\Omega)}\le \|u_1-u_2\|_{\mathbb H^s(\Omega)}\le \|z_1-z_2\|_{\mathbb H^{-s}(\Omega)}.
\end{align} 
\end{proposition}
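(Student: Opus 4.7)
The plan is to subtract the two weak formulations, test against $v = u_1 - u_2$, and exploit the monotonicity of $f(x,\cdot)$ coming from Assumption \ref{assum2}.

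More precisely, since both $u_1$ and $u_2$ lie in $\cV_0$, so does their difference, and hence it is a legitimate test function. Writing \eqref{form-ws} for $(u_1,z_1)$ and $(u_2,z_2)$ and subtracting, I get
\begin{equation*}
\int_{\Omega}(L_D)^{\frac{s}{2}}(u_1-u_2)\,(L_D)^{\frac{s}{2}}v\,dx+\int_{\Omega}\bigl(f(x,u_1)-f(x,u_2)\bigr)v\,dx=\langle z_1-z_2,v\rangle_{\mathbb H^{-s}(\Omega),\mathbb H^{s}(\Omega)},
\end{equation*}
for every $v\in\cV_0$. Choosing $v=u_1-u_2$, the first term on the left equals $\|u_1-u_2\|_{\mathbb H^s(\Omega)}^2$ by \eqref{norm-2}. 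The second term is nonnegative: by Assumption \ref{assum2}, $f(x,\cdot)$ is strictly increasing, so the integrand $\bigl(f(x,u_1(x))-f(x,u_2(x))\bigr)(u_1(x)-u_2(x))\geq 0$ pointwise a.e.

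Applying the duality bound on the right-hand side and then cancelling one factor of $\|u_1-u_2\|_{\mathbb H^s(\Omega)}$ (treating separately the trivial case $u_1=u_2$) yields
\begin{equation*}
\|u_1-u_2\|_{\mathbb H^s(\Omega)}^2\le \|z_1-z_2\|_{\mathbb H^{-s}(\Omega)}\|u_1-u_2\|_{\mathbb H^s(\Omega)},
\end{equation*}
which is the right inequality in \eqref{EST1}. The left inequality is just the continuous embedding $\mathbb H^s(\Omega)\hookrightarrow L^2(\Omega)$ (or, equivalently, the Poincaré-type estimate coming from the positive spectrum of $L_D$), the constant $C=C(N,s,\Omega)$ being the reciprocal of the embedding norm.

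There is no real obstacle here — the only subtlety is making sure that $u_1-u_2\in\cV_0$ so that it can be inserted as a test function (which follows directly since $\cV_0$ is a vector space), and that the monotonicity term drops out with the correct sign. The core of the argument is the monotonicity of $f(x,\cdot)$ in the second variable, exactly as in the classical $s=1$ theory.
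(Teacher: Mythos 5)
Your argument is correct and follows essentially the same route as the paper's proof: test the subtracted weak formulations with $v=u_1-u_2$, discard the nonnegative monotonicity term coming from Assumption \ref{assum2}, apply the duality bound and cancel one factor of $\|u_1-u_2\|_{\mathbb H^s(\Omega)}$, and conclude the left inequality from the embedding $\mathbb H^s(\Omega)\hookrightarrow L^2(\Omega)$. Nothing is missing.
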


\begin{proof} 
Taking $v=u_1-u_2$ as a test function in \eqref{form-ws}, we get that
\begin{align*}
\int_{\Omega}|(L_D)^{\frac s2}(u_1-u_2)|^2\;dx+\int_{\Omega}\left[f(x,u_1)-f(x,u_2)\right](u_1-u_2)\;dx 
=\langle z_1-z_2,u_1-u_2\rangle_{\mathbb H^{-s}(\Omega),\mathbb H^s(\Omega)}.
\end{align*}
Since $f(x,\cdot)$ is monotone for a.e. $x\in\Omega$, we have that $\left[f(x,u_1)-f(x,u_2)\right](u_1-u_2)\ge 0$. Thus,  from the preceding identity  we can deduce that

\begin{align*}
\|u_1-u_2\|_{\mathbb H^s(\Omega)}^2=\int_{\Omega}|(L_D)^{\frac s2}(u_1-u_2)|^2\;dx\le\langle z_1-z_2,u_1-u_2\rangle_{\mathbb H^{-s}(\Omega),\mathbb H^s(\Omega)}
\le \|z_1-z_2\|_{\mathbb H^{-s}(\Omega)}\|u_1-u_2\|_{\mathbb H^{s}(\Omega)}.
\end{align*}
The above estimate together with the embedding $\mathbb H^s(\Omega) \hookrightarrow L^2(\Omega)$ imply \eqref{EST1}.
\end{proof}

\begin{remark}\label{op-rem}
{\em  From Proposition \ref{prop-exis} (its proof), we have  that for every $u\in \cV_0$ there exists a unique $A_F(u)\in (\cV_0)^\star$ such that $\mathcal F_D(u,v)=\langle A_F(u),v\rangle_{(\cV_0)^\star,\mathcal V_0}$ for every $v\in \cV_0$. This defines an operator $A_F:\cV_0\to (\cV_0)^\star$ which is hemi-continuous, strictly monotone, continuous, surjective and bounded. }
\end{remark}

Next we give further qualitative properties of the above mentioned operator.

\begin{proposition}
\label{prof:growth}
Let $A_F:\mathcal V_0\to (\cV_0)^\star$ be the surjective, continuous and bounded operator mentioned in Remark \ref{op-rem}. Then $A_F$ is also injective, hence invertible and its inverse $A_F^{-1}$ is bounded from $ (\cV_0)^\star$ into $\mathcal V_0$. In addition, if $f$ satisfies the following growth condition: there exists a constant $c\in (0,1]$ such that
\begin{align}\label{growth}
c|f(x,\xi-\eta)|\le |f(x,\xi)-f(x,\eta)|
\end{align}
for a.e. $x\in\Omega$ and for all $\xi,\eta\in\RR$, then $A_F^{-1}$ is also continuous from $ (\cV_0)^\star$ into $\mathcal V_0$. Furthermore, if $r>(2^\star)'=\frac{2N}{N+2s}$, then $A_F^{-1}: L^{r}(\Omega)\to \mathcal V_0$ and $A_F^{-1}: L^r(\Omega)\to L^p(\Omega)$ are compact for every $p\in (1,2^{\star})$.
\end{proposition}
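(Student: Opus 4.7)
\textbf{Step 1: Injectivity, bijectivity, and boundedness of $A_F^{-1}$.} Injectivity is immediate: strict monotonicity of $A_F$ (Remark \ref{op-rem}) forces $A_F u_1=A_F u_2$ to imply $u_1=u_2$, and combined with the already-known surjectivity this yields bijectivity. To bound $A_F^{-1}$, I would take $z\in(\cV_0)^\star$ with $u=A_F^{-1}(z)$ and test \eqref{form-ws} with $v=u$:
\begin{align*}
\|u\|_{\mathbb H^s(\Omega)}^2+\int_\Omega f(x,u)u\,dx=\langle z,u\rangle_{(\cV_0)^\star,\cV_0}\le \|z\|_{(\cV_0)^\star}\|u\|_{\cV_0}.
\end{align*}
Both left-hand terms are nonnegative, since $f(x,\cdot)$ is odd and strictly increasing. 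The $\Delta_2$-condition \eqref{delta-2} yields $\int_\Omega F(x,u)\,dx\le\int_\Omega f(x,u)u\,dx$, and standard Orlicz-space theory under $\Delta_2$ converts control of the modular $\int F(x,u)\,dx$ into control of the Luxemburg norm $\|u\|_{F,\Omega}$. The two pieces combine to bound $\|u\|_{\cV_0}=\|u\|_{\mathbb H^s(\Omega)}+\|u\|_{F,\Omega}$ by a function of $\|z\|_{(\cV_0)^\star}$.

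\textbf{Step 2: Continuity of $A_F^{-1}$.} Let $z_n\to z$ in $(\cV_0)^\star$ and set $u_n=A_F^{-1}(z_n)$, $u=A_F^{-1}(z)$. By the boundedness just established, $(u_n)$ is bounded in $\cV_0$, so along a subsequence $u_n\rightharpoonup\bar u$ weakly in $\cV_0$; a standard Minty-type argument using hemi-continuity and strict monotonicity of $A_F$ (Remark \ref{op-rem}) identifies $\bar u=u$. Subtracting the weak formulations for $u_n$ and $u$ and testing with $u_n-u$ gives
\begin{align*}
\|u_n-u\|_{\mathbb H^s(\Omega)}^2+\int_\Omega [f(x,u_n)-f(x,u)](u_n-u)\,dx=\langle z_n-z,u_n-u\rangle_{(\cV_0)^\star,\cV_0}\longrightarrow 0,
\end{align*}
the right-hand side vanishing because $\|z_n-z\|_{(\cV_0)^\star}\to 0$ while $(u_n-u)$ remains bounded in $\cV_0$. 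Monotonicity of $f(x,\cdot)$ makes each left-hand term nonnegative, so each tends to zero, giving $u_n\to u$ in $\mathbb H^s(\Omega)$. For the $L_F$-part I will exploit the growth condition \eqref{growth}: the absolute-value bound $c|f(x,u_n-u)|\le|f(x,u_n)-f(x,u)|$, multiplied by $u_n-u$ and using oddness plus monotonicity of $f$ to track signs, becomes
\begin{align*}
c\,f(x,u_n-u)(u_n-u)\le [f(x,u_n)-f(x,u)](u_n-u)\quad\text{a.e. in }\Omega.
\end{align*}
Hence $\int_\Omega f(x,u_n-u)(u_n-u)\,dx\to 0$, and a further application of \eqref{delta-2} gives $\int_\Omega F(x,u_n-u)\,dx\to 0$, which under $\Delta_2$ is equivalent to $\|u_n-u\|_{F,\Omega}\to 0$. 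Since the argument applied to an arbitrary subsequence of $(z_n)$, a standard subsequence argument removes the extraction and yields $u_n\to u$ in $\cV_0$.

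\textbf{Step 3: Compactness.} For $r>(2^\star)'=\frac{2N}{N+2s}$ the conjugate exponent $r'$ lies in $[1,2^\star)$, so by the Rellich–Kondrachov compactness of the embedding $\cV_0\hookrightarrow L^{r'}(\Omega)$ underlying \eqref{sobo1}, duality delivers a compact embedding $L^r(\Omega)\hookrightarrow (\cV_0)^\star$. Given a bounded sequence $(z_n)$ in $L^r(\Omega)$, reflexivity produces a weakly convergent subsequence $z_n\rightharpoonup z$ in $L^r(\Omega)$, and the compact embedding promotes this to strong convergence in $(\cV_0)^\star$. The continuity established in Step 2 then gives $A_F^{-1}(z_n)\to A_F^{-1}(z)$ strongly in $\cV_0$, proving compactness of $A_F^{-1}:L^r(\Omega)\to\cV_0$. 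Composing with the compact embedding $\cV_0\hookrightarrow L^p(\Omega)$ valid for every $p\in(1,2^\star)$ yields the final claim. (Minor modifications handle the cases $N=1$, $s\ge\tfrac12$, where \eqref{sobo2} replaces \eqref{sobo1}.)

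\textbf{Main obstacle.} The delicate point is the upgrade from $\mathbb H^s$-convergence to $L_F$-convergence in Step 2. Strict monotonicity alone gives only $\int_\Omega [f(x,u_n)-f(x,u)](u_n-u)\,dx\to 0$, an expression that a priori controls no modular of $u_n-u$. The growth condition \eqref{growth} is precisely what converts this ``difference of $f$'' estimate into the ``$f$ of the difference'' estimate $\int f(x,u_n-u)(u_n-u)\to 0$, after which the $\Delta_2$-condition delivers Luxemburg-norm convergence. The sign bookkeeping in passing from the absolute-value inequality \eqref{growth} to the pointwise inequality with the factor $u_n-u$ must be done carefully, using that $f(x,\cdot)$ is odd and strictly increasing.
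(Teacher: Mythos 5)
Your proof is correct, and its overall skeleton (strict monotonicity gives injectivity; coercivity gives boundedness of $A_F^{-1}$; the growth condition plus $(\Delta_2)$ give continuity; compact embeddings plus duality give compactness) matches the paper's, but two steps are argued along genuinely different lines. For continuity, the paper argues by contradiction: it extracts a weak limit $v$ of $u_n$, shows $\langle A_F(u_n)-A_F(v),u_n-v\rangle\to 0$, upgrades to strong convergence via \eqref{F2} and $(\Delta_2)$, and then needs demi-continuity of $A_F$ and injectivity to identify $v$ with $u$. You instead test the difference of the two weak formulations directly with $u_n-u$, observe that $\langle z_n-z,u_n-u\rangle\to 0$ because $z_n\to z$ strongly in $(\cV_0)^\star$ while $u_n-u$ stays bounded, and then run the same \eqref{F1}--\eqref{delta-2} machinery on $u_n-u$; this is more direct, avoids demi-continuity entirely, and in fact makes your weak-subsequence/Minty digression (and the closing ``subsequence argument'') superfluous, since the full sequence is handled at once. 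For compactness of $A_F^{-1}:L^r(\Omega)\to L^p(\Omega)$, the paper interpolates $L^p$ between $L^2$ and $L^{2^\star}$ to produce a Cauchy sequence, whereas you simply compose the compact map $A_F^{-1}:L^r(\Omega)\to\cV_0$ with the (even just continuous) embedding $\cV_0\hookrightarrow L^p(\Omega)$, which is shorter and equally valid. One small caveat in your Step 1: the passage from control of the modular $\int_\Omega F(x,u)\,dx$ to control of $\|u\|_{F,\Omega}$ against the \emph{linear} term $\|z\|_{(\cV_0)^\star}\|u\|_{\cV_0}$ needs the superlinear coercivity of the modular, which comes from the $(\Delta_2)$-condition on $\widetilde F$ (the $\nabla_2$-condition on $F$), not merely from $(\Delta_2)$ for $F$; this is exactly the coercivity of $\mathcal F_D$ that the paper cites from \cite[Proposition 2.8]{HAntil_JPfefferer_MWarma_2016a}, so it is available under Assumption \ref{assum3}, but it deserves to be named explicitly.
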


\begin{proof}
Recall that by Remark \ref{op-rem}, the operator $A_F$ is strictly monotone. More precisely, we have that
\begin{align*}
\langle A_F(u)-A_F(v),u-v\rangle_{(\cV_0)^\star,\cV_0}=\mathcal F_D(u,u-v)-\mathcal F_D(v,u-v)>0,
\end{align*}
for all $u,v\in\mathcal V_0$ with $u\ne v$. This shows that $A_F$ is injective and hence, $A_F^{-1}$ exists. The estimate
\begin{align*}
\mathcal F_D(u,u)=\langle A_F(u),u\rangle_{(\cV_0)^\star,\cV_0}\le \|A_F(u)\|_{(\mathcal V_0)^\star}\|u\|_{\mathcal V_0},
\end{align*}
together with the coercivity of $\mathcal F_D$, that is, $\lim_{\|u\|_{\mathcal V_0}\to\infty}\frac{\mathcal F_D(u,u)}{\|u\|_{\mathcal V_0}}=\infty$ (see \cite[Proposition 2.8]{HAntil_JPfefferer_MWarma_2016a}),  imply that
\begin{align*}
\lim_{\|u\|_{\mathcal V_0}\to\infty}\|A_F(u)\|_{(\mathcal V_0)^\star}=\infty.
\end{align*}
Thus $A_F^{-1}:(\mathcal V_0)^\star\to\mathcal V_0$ is bounded.

Next, assume that the nonlinearity $f$ satisfies \eqref{growth}. Notice that it follows from \eqref{growth} that
\begin{align}\label{F1}
(f(x,\xi)-f(x,\eta))(\xi-\eta)\ge cf(x,\xi-\eta)(\xi-\eta),
\end{align}
for a.e. $x\in\Omega$ and for all $\xi,\eta\in\RR$.
The estimate \eqref{F1} together with the $(\Delta_2)$-condition \eqref{delta-2} imply that for every $u,v\in\mathcal V_0$,
\begin{align}\label{F2}
\int_{\Omega}(f(x,u)-f(x,v))(u-v)\;dx\ge \int_{\Omega}cf(x,u-v)(u-v)\;dx\ge \int_{\Omega}F(x,u-v)\;dx.
\end{align}
We show that $A_F^{-1}:(\mathcal V_0)^\star\to\mathcal V_0$ is continuous. Assume that $A_F^{-1}$ is not continuous. Then there exist a sequence $\{z_n\}_{n\in\NN}\subset (\mathcal V_0)^\star$ with $z_n\to z$ in $(\mathcal V_0)^\star$ as $n\to\infty$, and a constant $K>0$ such that
\begin{align}\label{no-cont}
\|A_F^{-1}(z_n)-A_F^{-1}(z)\|_{\mathcal V_0}\ge K\;\mbox{ for all }\;n\in\NN.
\end{align}
Let $u_n:=A_F^{-1}(z_n)$ and $u:=A_F^{-1}(z)$. Since $\{z_n\}_{n\in\NN}$ is a bounded sequence and $A_F^{-1}$ is bounded, we have that $\{u_n\}_{n\in\NN}$ is a bounded sequence in $\mathcal V_0$. Since $\mathcal V_0$ is a reflexive Banach space, by possibly passing to a subsequence if necessary, we may assume that $u_n$ converges weakly to some $v\in\mathcal V_0$ as $n\to\infty$. Since $A_F(u_n)-A_F(v)\to z-A_F(v)$ in $(\mathcal V_0)^\star$ as $n\to\infty$, and $(v-u_n)\rightharpoonup 0$ in $\mathcal V_0$ as $n\to\infty$, it follows that
\begin{align}\label{ee}
\lim_{n\to\infty}\langle A_F(u_n)-A_F(v),u_n-v\rangle_{(\cV_0)^\star,\cV_0}=0.
\end{align}
Using \eqref{F2} we get that for every $n\in\NN$,
\begin{align*}
\int_{\Omega}|(L_D)^{\frac s2}(u_n-v)|^2\;dx+\int_{\Omega}F(x,u_n-v)\;dx\le c\langle A_F(u_n)-A_F(v),u_n-v\rangle_{(\cV_0)^\star,\cV_0}.
\end{align*}
This estimate together with \eqref{ee} imply that
\begin{align*}
\lim_{n\to\infty}\int_{\Omega}|(L_D)^{\frac s2}(u_n-v)|^2\;dx=0\;\mbox{ and } \lim_{n\to\infty}\int_{\Omega}F(x,u_n-v)\;dx=0.
\end{align*}
Thus, using the $(\Delta_2)$-condition \eqref{delta-2}, we get that $u_n\to v$ in $\mathcal V_0$ as $n\to\infty$. Since $A_F$ is demi-continuous (this follows from the fact that $A_F$ is hemi-continuous, monotone and bounded by Remark \ref{op-rem}), it follows that
\begin{align*}
z_n=A_F(u_n)\rightharpoonup A_F(v)\;\mbox{ in }\; (\mathcal V_0)^\star\;\mbox{ and }\; z_n\to z=A_F(u)\;\mbox{ in }\; (\mathcal V_0)^\star\;\mbox{ as } n\to\infty.
\end{align*}
The uniqueness of the weak limit implies that $A_F(u)=z=A_F(v)$ and hence, by the injectivity of $A_F$ we get that $u=v$. We have shown that
\begin{align*}
\lim_{n\to\infty}|\|A_F^{-1}(z_n)-A_F^{-1}(z)\|_{\mathcal V_0}=\lim_{n\to\infty}\|u_n-u\|_{\mathcal V_0}=0,
\end{align*}
and this contradicts \eqref{no-cont}. 
Thus, $A_F^{-1}:(\mathcal V_0)^\star\to\mathcal V_0$ is continuous. 

Next let $1<q<2^\star$. Since the embedding $\mathcal V_0\hookrightarrow L^q(\Omega)$ is compact, then by duality, the embedding $L^{r}(\Omega)\hookrightarrow (\mathcal V_0)^\star$ is compact for every $r>(2^\star)'=\frac{2N}{N+2s}$. This, together with the fact that $A_F^{-1}:(\mathcal V_0)^\star\to\mathcal V_0$  is continuous and bounded, imply that $A_F^{-1}:L^{r}(\Omega)\to\mathcal V_0$ is compact for every $r>(2^\star)'=\frac{2N}{N+2s}$.
It remains to show that $A_F^{-1}$ is also compact as a map into $L^p(\Omega)$ for every $p\in (1,2^{\star})$. Since $A_F^{-1}$ is bounded, we have to show that the image of every bounded set $\mathcal B\subset L^r(\Omega)$ is relatively compact in $L^p(\Omega)$ for every $1<p<2^\star$. Let $\{u_n\}_{n\in\NN}$ be a sequence in $A_F^{-1}(\mathcal B)$ and $z_n:=A_F(u_n)\in\mathcal B$. Since $\mathcal B$ is bounded, it follows that $\{z_n\}_{n\in\NN}$ is bounded. Since $A_F^{-1}$ is compact as a map into $\mathcal V_0$, we have that there is a subsequence denoted again $\{z_n\}_{n\in\NN}$ such that $A_F^{-1}(z_n)\to u$ in $\mathcal V_0$ as $n\to\infty$, and hence also in $L^2(\Omega)$. We have to show that $u_n\to u$ in $L^p(\Omega)$ as $n\to\infty$. Let $p\in [2,2^{\star})$. Since $\{u_n\}_{n\in\NN}$ is bounded in $L^{2^\star}(\Omega)$, a standard interpolation inequality shows that there is $\tau\in (0,1)$ such that
\begin{align}\label{in-interpo}
\|u_n-u_m\|_{L^p(\Omega)}\le \|u_n-u_m\|_{L^2(\Omega)}^\tau\|u_n-u_m\|_{L^{2^\star}(\Omega)}^{1-\tau}\le C\|u_n-u_m\|_{L^2(\Omega)}^\tau,
\end{align}
for some constant $C>0$ independent of $n$. More precisely $\frac 1p=\frac{\tau}{2}+\frac{1-\tau}{2^\star}$.
Since $2\le p<2^\star$, a simple calculation gives that $0<\tau=\frac{2(2^\star-p)}{p(2^\star-2)}<1$.
Now, as $u_n$ converges in $L^2(\Omega)$, it follows from \eqref{in-interpo} that $\{u_n\}_{n\in\NN}$ is a Cauchy sequence in $L^p(\Omega)$ and therefore converges in $L^p(\Omega)$. Hence, $A_F^{-1}:L^r(\Omega)\to L^p(\Omega)$ is compact for every $p\in [2,2^\star)$. The case $p\in (1,2)$ follows from the embedding $L^2(\Omega)\hookrightarrow L^p(\Omega)$.  The proof is finished.
\end{proof}

We conclude this subsection with the following comment.
\begin{remark}
{\em We mention that even if the results in Proposition \ref{prof:growth} are not explicitly used further in the present article, they are important in their own and can be used to obtain some qualitative properties of solutions to associated semilinear parabolic problems. In addition, using these results, one can define the functions $J_1$ and $J_2$ (see Section \ref{s:red_prob}) on $L^p(\Omega)$ for $p\in (1,2^\star)$ and obtain some of the results. Further properties of $A_F$ will be given in Section \ref{s:red_prob} since the operator $S$ defined in \eqref{eq:SLinf} and \eqref{eq:SL2} is nothing else than the restriction of $A_{F}^{-1}$ to $L^\infty(\Omega)$ and $L^2(\Omega)$, respectively.}
\end{remark}

\subsection{Regularity of weak solutions}\label{sec-weak-sol}
The following theorem is the first main result of this section.

\begin{theorem}\label{theo-bound}
Let Assumption \ref{assum3} hold and assume that $z\in L^p(\Omega)$ with 
\begin{equation}\label{cond-p}
\begin{cases}
p>\frac{N}{2s}\;\;&\mbox{ if }\; N>2s,\\
p>1 \;\;&\mbox{ if }\; N=2s,\\
p=1\;\;&\mbox{ if }\; N<2s.
\end{cases}
\end{equation}
Then every weak solution $u$ of  \eqref{ellip-pro} belongs to $L^\infty(\Omega)$ and there is a constant $C=C(N,s,p,\Omega)>0$ such that
\begin{align}\label{inf-norm}
\|u\|_{L^\infty(\Omega)}\le C\|z\|_{L^p(\Omega)}.
\end{align}
\end{theorem}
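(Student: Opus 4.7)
\textbf{Proof Plan for Theorem \ref{theo-bound}.}

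The plan is to run a Stampacchia-type truncation argument adapted to the nonlocal setting. For $k>0$ I would test the weak formulation \eqref{form-ws} with $v_k:=(u-k)_+$. A first step is to justify that $v_k\in\mathcal V_0$: membership in $\mathbb H^s(\Omega)$ is a Beurling--Deny / Stampacchia truncation property (preserved under $(\cdot)_+$ and constant subtraction thanks to the Dirichlet boundary condition that makes $u$, and hence $v_k$, extendable by zero), while the $L_F$-membership follows from $0\le v_k\le u_+$ and $F(x,\cdot)$ being increasing in $|t|$.

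The next step is the key sign analysis. Using the integral representation \eqref{int-rep} of $(L_D)^s$ together with the pointwise inequality $(a-b)(a_+-b_+)\ge (a_+-b_+)^2$ applied to $a=u(x)-k$, $b=u(y)-k$, and observing that on $\{v_k>0\}$ one has $u>k>0$ so $\kappa_s(x)\,u(x)\,v_k(x)\ge 0$, I get
\begin{equation*}
\int_\Omega (L_D)^{\frac s2}u\,(L_D)^{\frac s2}v_k\,dx \;\ge\; \|(L_D)^{\frac s2}v_k\|_{L^2(\Omega)}^2 \;=\; \|v_k\|_{\mathbb H^s(\Omega)}^2.
\end{equation*}
On the other hand, since $f(x,\cdot)$ is odd, monotone and $f(x,0)=0$ (Assumption \ref{assum2}), for $u(x)>k>0$ we have $f(x,u(x))\ge 0$, hence $\int_\Omega f(x,u)v_k\,dx\ge 0$. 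Plugging these two inequalities into \eqref{form-ws} yields
\begin{equation*}
\|v_k\|_{\mathbb H^s(\Omega)}^2 \;\le\; \int_{A(k)} z\,v_k\,dx, \qquad A(k):=\{x\in\Omega: u(x)>k\}.
\end{equation*}

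The third step is the Stampacchia iteration. In the principal case $N>2s$, I would combine the Sobolev embedding $\mathbb H^s(\Omega)\hookrightarrow L^{2^\star}(\Omega)$ from \eqref{sobo1} with H\"older's inequality on $A(k)$ to obtain
\begin{equation*}
\|v_k\|_{L^{2^\star}(\Omega)}^2 \;\le\; C\,\|z\|_{L^p(\Omega)}\,\|v_k\|_{L^{2^\star}(\Omega)}\,|A(k)|^{1-\frac1p-\frac1{2^\star}},
\end{equation*}
so that for $h>k$, using $(h-k)|A(h)|^{1/2^\star}\le \|v_k\|_{L^{2^\star}(\Omega)}$,
\begin{equation*}
|A(h)| \;\le\; \frac{C\,\|z\|_{L^p(\Omega)}^{2^\star}}{(h-k)^{2^\star}}\,|A(k)|^{\,2^\star\left(1-\frac1p-\frac1{2^\star}\right)}.
\end{equation*}
The exponent $\beta:=2^\star(1-\frac1p-\frac1{2^\star})$ exceeds $1$ precisely when $p>N/(2s)$, which is exactly the hypothesis \eqref{cond-p}. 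Stampacchia's lemma (applied to the nonincreasing function $k\mapsto|A(k)|$ with $\phi(0)\le|\Omega|$) then produces a level $k^\star\le C\|z\|_{L^p(\Omega)}$ above which $|A(k^\star)|=0$, i.e.\ $u\le k^\star$ a.e.\ in $\Omega$. Running the same argument with $-u$ in place of $u$ (legitimate because $f$ is odd, so the problem is invariant under $u\mapsto -u$, $z\mapsto -z$) yields the corresponding lower bound and gives \eqref{inf-norm}.

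For the remaining two cases I would argue analogously but with a different Sobolev inequality: if $N=2s$ (so $N=1$, $s=\tfrac12$) I use $\mathbb H^s(\Omega)\hookrightarrow L^q(\Omega)$ for any $q<\infty$, choose $q$ depending on $p>1$, and rerun the iteration; if $N<2s$ (so $N=1$, $s>\tfrac12$) I invoke the compact embedding \eqref{sobo2} into $C^{0,s-1/2}(\bar\Omega)$ together with the estimate $\|u\|_{\mathbb H^s(\Omega)}\le C\|z\|_{\mathbb H^{-s}(\Omega)}\le C\|z\|_{L^1(\Omega)}$ from Proposition \ref{prop-exis} and duality, which gives \eqref{inf-norm} directly. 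The main obstacle is the first step, namely controlling the nonlocal Dirichlet form by the energy of the truncation: this rests on the integral representation \eqref{int-rep} and the fact that both the kernel $K_s^L$ and the killing potential $\kappa_s$ are nonnegative, so that truncation reduces the energy exactly as in the local case.
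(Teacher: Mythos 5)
Your proposal is correct and takes essentially the same route as the paper, whose proof is exactly the truncation argument of Proposition~\ref{pro-dif-sol} specialized to $u_2=0$, $z_2=0$ (modeled on \cite[Theorem~2.9]{HAntil_JPfefferer_MWarma_2016a}): test with level-set truncations, use the representation \eqref{int-rep} with $K_s^L,\kappa_s\ge 0$ to bound the bilinear term from below by $\|v_k\|_{\mathbb H^s(\Omega)}^2$, discard the monotone term $\int_\Omega f(x,u)v_k\,dx\ge 0$, and close with H\"older, the embedding \eqref{sobo1} and the iteration Lemma~\ref{lem-01}, with the low-dimensional cases handled exactly as you indicate. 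One minor touch-up: to obtain the full norm $\|v_k\|_{\mathbb H^s(\Omega)}^2$ on the left you need $\kappa_s\,u\,v_k\ge \kappa_s\,v_k^2$ (which holds since $u\ge u-k=v_k\ge 0$ on $\{v_k>0\}$ for $k\ge 0$), not merely $\kappa_s\,u\,v_k\ge 0$ as stated.
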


\begin{remark}\rm
We mention that if $N=1$ and $\frac 12<s<1$, then it follows from \eqref{sobo2} that the weak solution of \eqref{ellip-pro} is globally H\"older continuous on $\bOm$ and in this case there is nothing to prove. Thus we need to prove the theorem only in the cases $N\ge 2$, or $N=1$ and $0<s\le \frac 12$.
\end{remark}

\begin{proof}[\bf Proof of Theorem \ref{theo-bound}]
The proof of this theorem also follows along the lines of the proof of the corresponding result for the case $L=-\Delta$ contained in \cite[Theorem 2.9]{HAntil_JPfefferer_MWarma_2016a}. It can also be obtained by taking $u_2=0$ and $z_2=0$ in the proof of Proposition \ref{pro-dif-sol} below and in that case the growth condition \eqref{growth} on $f$ is not needed. 
\end{proof}

We notice that the proof of \cite[Theorem 2.9]{HAntil_JPfefferer_MWarma_2016a} (hence, the proof of Theorem \ref{theo-bound}) uses the following result \cite[Lemma B.1]{kinderlehrer1980} which will be also used in the proof of Proposition \ref{pro-dif-sol} below.
\begin{lemma}\label{lem-01}
Let $\Xi = \Xi(t)$ be a nonnegative, non-increasing function on a half line $t\ge k_0\ge 0$ such that there are positive constants $c, \alpha$ and $\delta$ ($\delta >1$) with
\begin{equation*}
\Xi(h) \le c(h-k)^{-\alpha}\Xi(k)^{\delta}\mbox{ for }  h>k\ge k_0.
\end{equation*}
Then
\begin{equation*}
\Xi(k_0+d) = 0\quad \mbox{ with }\quad d^{\alpha}= c \Xi(k_0)^{\delta -1}2^{\alpha\delta/(\delta -1)}.
\end{equation*}
\end{lemma}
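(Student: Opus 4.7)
The plan is to use the classical Stampacchia iteration: construct an increasing sequence $k_n$ converging to $k_0+d$ along which $\Xi(k_n)$ decays geometrically, and then pass to the limit using the monotonicity of $\Xi$.

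First I would dispose of the trivial case $\Xi(k_0)=0$: since $\Xi$ is non-increasing, this forces $\Xi(t)=0$ for every $t\ge k_0$, so there is nothing to prove. From now on I assume $\Xi(k_0)>0$. Next, define
\begin{equation*}
k_n := k_0+d\bigl(1-2^{-n}\bigr),\qquad n\ge 0,
\end{equation*}
so that $k_n\nearrow k_0+d$ and $k_{n+1}-k_n=d\cdot 2^{-(n+1)}$. Applying the hypothesis with $h=k_{n+1}$ and $k=k_n$ yields the one-step recursion
\begin{equation*}
\Xi(k_{n+1})\le c\,2^{\alpha(n+1)}d^{-\alpha}\,\Xi(k_n)^{\delta}.
\end{equation*}

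The core step is to choose a geometric ratio $r\in(0,1)$ and verify by induction that $\Xi(k_n)\le \Xi(k_0)\,r^n$ for all $n\ge 0$. Matching powers in the recursion forces $r:=2^{-\alpha/(\delta-1)}$: plugging the induction hypothesis into the recursion reduces the induction step to checking
\begin{equation*}
c\,2^{\alpha(n+1)}d^{-\alpha}\,\Xi(k_0)^{\delta-1}\,r^{\delta n}\le r^{n+1},
\end{equation*}
which after simplification becomes exactly
\begin{equation*}
c\,\Xi(k_0)^{\delta-1}\,2^{\alpha\delta/(\delta-1)}\le d^{\alpha},
\end{equation*}
and this holds with equality by the definition of $d$ in the statement.

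Finally I would pass to the limit. Since $\Xi$ is non-increasing and $k_n<k_0+d$ for every finite $n$, one has $\Xi(k_0+d)\le \Xi(k_n)\le \Xi(k_0)\,r^n$, and letting $n\to\infty$ gives $\Xi(k_0+d)=0$. I do not expect any real obstacle: the only substantive choice is the decay ratio $r$, and its value $2^{-\alpha/(\delta-1)}$ is uniquely determined by the requirement that the exponents of $2^{n}$ cancel in the recursion after substituting the induction hypothesis.
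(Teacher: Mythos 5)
Your proof is correct: the choice $k_n=k_0+d(1-2^{-n})$, the induction $\Xi(k_n)\le \Xi(k_0)\,2^{-\alpha n/(\delta-1)}$ (which closes precisely because $d^{\alpha}=c\,\Xi(k_0)^{\delta-1}2^{\alpha\delta/(\delta-1)}$), and the passage to the limit using monotonicity are all sound, and the trivial case $\Xi(k_0)=0$ is handled properly. The paper does not prove this lemma itself but cites it as Lemma B.1 of Kinderlehrer--Stampacchia, whose proof is exactly this classical iteration, so your argument coincides with the standard one.
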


Next we give an $L^\infty$-estimate for the difference of two solutions which is the second main result of this section.

\begin{proposition}\label{pro-dif-sol}
Assume that Assumption \ref{assum3} holds and that $f$ satisfies the growth condition \eqref{growth}. Let $z_1,z_2\in L^p(\Omega)$ with $p$ as in \eqref{cond-p} and let $u_1,u_2\in\mathcal V_0\cap L^\infty(\Omega)$ be the corresponding weak solutions of \eqref{ellip-pro}. Then there is a constant $C=C(N,p,s,\Omega)>0$ such that
\begin{align}\label{EST2}
\|u_1-u_2\|_{L^\infty(\Omega)}\le C\|z_1-z_2\|_{L^p(\Omega)}.
\end{align}
\end{proposition}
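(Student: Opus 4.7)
The plan is to adapt the Stampacchia-type iteration used in the proof of Theorem \ref{theo-bound} to the difference $w := u_1-u_2$, which belongs to $\mathcal V_0 \cap L^\infty(\Omega)$. Subtracting the weak formulations of the two problems, $w$ satisfies
\begin{align*}
\int_{\Omega}(L_D)^{\frac s2}w\,(L_D)^{\frac s2}v\,dx+\int_{\Omega}\bigl[f(x,u_1)-f(x,u_2)\bigr]v\,dx=\int_{\Omega}(z_1-z_2)v\,dx
\end{align*}
for every $v\in\mathcal V_0$. The $L^\infty$-bound on $w$ will come from testing this identity with a one-sided truncation and iterating on the level sets.

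First I would test with $v:=(w-k)^+$ for $k\ge 0$, which lies in $\mathcal V_0\cap L^\infty(\Omega)$ since $w$ does. Using the integral representation \eqref{int-rep}, the pointwise truncation identity $(w(x)-w(y))\bigl((w-k)^+(x)-(w-k)^+(y)\bigr)\ge \bigl((w-k)^+(x)-(w-k)^+(y)\bigr)^2$, the nonnegativity $\kappa_s\ge 0$, and the fact that $w(w-k)^+\ge\bigl((w-k)^+\bigr)^2$, I would obtain the Stampacchia inequality
\begin{align*}
\int_{\Omega}(L_D)^{\frac s2}w\,(L_D)^{\frac s2}(w-k)^+\,dx\ge \|(w-k)^+\|_{\mathbb H^s(\Omega)}^2.
\end{align*}
On the active set $\{w>k\ge 0\}$ one has $u_1>u_2$, so by the monotonicity of $f(x,\cdot)$ the term $[f(x,u_1)-f(x,u_2)](w-k)^+\ge 0$ and can be discarded. (Invoking the growth condition \eqref{growth} via \eqref{F1} would further give the reinforced bound $c\int_\Omega f(x,w)(w-k)^+\,dx$ on the left-hand side, but only the sign is used for the iteration; this is exactly why Theorem \ref{theo-bound} follows by setting $u_2=0$, $z_2=0$ without needing \eqref{growth}.)

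Combining these observations, H\"older's inequality, and the Sobolev embedding $\mathcal V_0\hookrightarrow L^{2^\star}(\Omega)$ from \eqref{sobo1}, and writing $A(k):=\{x\in\Omega:w(x)>k\}$, I would arrive at
\begin{align*}
\|(w-k)^+\|_{L^{2^\star}(\Omega)}\le C\,\|z_1-z_2\|_{L^p(\Omega)}\,|A(k)|^{1-\frac 1p-\frac{1}{2^\star}}.
\end{align*}
The Chebyshev-type inequality $(h-k)|A(h)|^{\frac{1}{2^\star}}\le \|(w-k)^+\|_{L^{2^\star}(\Omega)}$, valid for $h>k\ge 0$, then produces
\begin{align*}
|A(h)|\le C\,(h-k)^{-2^\star}\,\|z_1-z_2\|_{L^p(\Omega)}^{2^\star}\,|A(k)|^{\delta}, \qquad \delta:=2^\star\Bigl(1-\tfrac 1p-\tfrac{1}{2^\star}\Bigr).
\end{align*}
Assumption \eqref{cond-p} is exactly what ensures $\delta>1$, so Lemma \ref{lem-01} applied with $\Xi(k):=|A(k)|$, $\alpha=2^\star$, and $k_0=0$ yields $|A(d)|=0$ for some $d\le C\|z_1-z_2\|_{L^p(\Omega)}$, i.e., $(u_1-u_2)^+\le C\|z_1-z_2\|_{L^p(\Omega)}$ a.e. Repeating the whole argument with $w$ replaced by $-w$ gives the matching bound on $(u_2-u_1)^+$ and completes \eqref{EST2}.

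The main obstacle will be verifying the Stampacchia truncation inequality cleanly for the spectral operator $(L_D)^{\frac s2}$. Unlike the case of $(-\Delta)^s$ on $\mathbb R^N$, where the kernel inequality is classical, here one must simultaneously handle the nonlocal kernel $K_s^L$ and the mass term $\kappa_s$ coming from \eqref{int-rep}, and confirm that both contributions retain the correct sign when paired with a positive truncation $(w-k)^+$ with $k\ge 0$. Once this is in place, the rest of the argument is routine Stampacchia iteration mirroring Theorem \ref{theo-bound}.
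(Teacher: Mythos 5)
Your argument is correct, and its skeleton is the same as the paper's: subtract the two weak formulations, test with a truncation of the difference, use the integral representation \eqref{int-rep} to get the Stampacchia-type coercivity of the quadratic part (both the kernel term and the $\kappa_s\ge 0$ term keep the right sign against the truncation, exactly as in the paper's inequality \eqref{AA1}), then H\"older, the Sobolev embedding, and the level-set Lemma \ref{lem-01}, with \eqref{cond-p} giving $\delta>1$. The difference is in the truncation and in how the nonlinearity is treated: the paper uses the symmetric truncation $v_k=(|v|-k)^+\sgn(v)$, which covers both signs in one pass, and invokes the growth condition \eqref{growth} in Step 1 to prove the stronger form comparison $c\,\mathcal F_D(v_k,v_k)\le \mathcal F_D(u_1,v_k)-\mathcal F_D(u_2,v_k)$ (claim \eqref{CLAIM}), after which only the nonnegativity of $\int_\Omega f(x,v_k)v_k\,dx$ is used in Step 2; you instead use the one-sided truncation $(w-k)^+$ applied to $\pm w$ and discard the nonlinear term purely by monotonicity of $f(x,\cdot)$ on the set $\{w>k\}$. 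Your route therefore never uses \eqref{growth}, which is a legitimate (and slightly more elementary) observation -- the sign argument you give is valid, and indeed the same pointwise sign holds for the paper's $v_k$ -- at the mild cost of running the iteration twice; the paper's version buys the quantitative comparison \eqref{CLAIM} (in the spirit of \eqref{F2}) in a single pass. Two small points to keep at the paper's level of rigor: membership of $(w-k)^+$ in $\mathcal V_0$ should be justified by the same truncation lemma the paper cites (\cite[Lemma 2.7]{War}), and in the borderline case $N=2s$ (i.e. $N=1$, $s=\tfrac12$) one must choose $2^\star\in[1,\infty)$ large enough, depending on $p>1$, so that $\delta>1$; the case $N<2s$ needs no proof by the H\"older embedding, as the paper remarks after Theorem \ref{theo-bound}.
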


\begin{proof}
 We prove the proposition in two steps.
 
{\bf Step 1}.
Let $k\ge 0$. Set $v:=u_1-u_2$ and $v_k:=(|v|-k)^+\sgn(v)$. We claim that $v_k\in\mathcal V_0$ and 
\begin{align}\label{CLAIM}
c\mathcal F_D(v_k,v_k)\le \mathcal F_D(u_1,v_k)-\mathcal F_D(u_2,v_k),
\end{align}
for every $k\ge 0$, where $c\in (0,1]$ is the constant appearing in \eqref{growth}. Indeed, using \cite[Lemma 2.7]{War} we get that $v_k\in \cV_0$.
Let $A_k:=\{x\in\Om:\;|v(x)|\ge k\}$, $A_k^+:=\{x\in\Om:\;v(x)\ge k\}$ and $A_k^-:=\{x\in\Om:\;v(x)\le -k\}$ so that $A_k=A_k^+\cup A_k^-$. It follows from the representation \eqref{int-rep}  that
\begin{align}\label{Aa1}
 &\mathcal F_D(u_1,v_k)-\mathcal F_D(u_2,v_k)\notag\\
 =&\int_{\Omega}(L_D)^{\frac s2}(u_1-u_2)(L_D)^{\frac s2}(v_k)\;dx+\int_{\Omega}\Big(f(x,u_1)-f(x,u_2)\Big)v_k(x)\;dx\notag\\
 =&\frac 12\int_{\Omega}\int_{\Omega}\Big((u_1-u_2)(x)-(u_1-u_2)(y)\Big)\Big(v_k(x)-v_k(y)\Big)K_s^L(x,y)\;dxdy\notag\\
 &+\int_{\Omega}\kappa_s(x)(u_1-u_2)(x)v_k(x)\;dx+\int_{\Omega}\Big(f(x,u_1)-f(x,u_2)\Big)v_k(x)\;dx.
\end{align}
Proceeding exactly as the proof of  \cite[Theorem 2.9]{HAntil_JPfefferer_MWarma_2016a} we get that
\begin{align}\label{AA1}
&\frac 12\int_{\Omega}\int_{\Omega}\Big((u_1-u_2)(x)-(u_1-u_2)(y)\Big)\Big(v_k(x)-v_k(y)\Big)K_s^L(x,y)\;dxdy\notag\\
& +\int_{\Omega}\kappa_s(x)(u_1-u_2)(x)v_k(x)\;dx\notag\\
 \ge &\frac 12\int_{\Omega}\int_{\Omega}|v_k(x)-v_k(y)|^2K_s^L(x,y)\;dxdy +\int_{\Omega}\kappa_s(x)|v_k(x)|^2\;dx
 =\int_{\Omega}|(L_D)^{\frac s2}(v_k)|^2\;dx.
\end{align}
We notice that
\begin{align}\label{AA2}
\int_{\Omega}\Big(f(x,u_1)-f(x,u_2)\Big)v_k\;dx
=\int_{\Omega}cf(x,v_k)v_k\;dx +\int_{\Omega}\Big[f(x,u_1)-f(x,u_2)-cf(x,v_k)\Big]v_k\;dx,
\end{align}
where $c\in (0,1]$ is the constant appearing in \eqref{growth}. For a.e. $x\in A_k^+$, we have that
\begin{align*}
cf(x,v_k(x))
 =cf(x,u_1(x)-u_2(x)-k)\le cf(x,u_1(x)-u_2(x)) 
 \le f(x,u_1(x))-f(x,u_2(x)).
\end{align*}
Multiplying this inequality with $v_k(x)\ge 0$ gives for a.e. $x\in A_k^+$,
\begin{align}\label{Bbb1}
\Big[f(x,u_1(x))-f(x,u_2(x))-cf(x,v_k(x))\Big]v_k(x)\ge 0.
\end{align}
Similarly, for a.e. $x\in A_k^-$, we have that
\begin{align*}
cf(x,v_k(x))
=cf(x,u_1(x)-u_2(x)+k)\ge cf(x,u_1(x)-u_2(x)) 
\ge f(x,u_1(x))-f(x,u_2(x)).
\end{align*}
Hence, multiplying this inequality with $v_k(x)\le 0$, we get that for a.e. $x\in A_k^-$,
\begin{align}\label{Bbb2}
\Big[f(x,u_1(x))-f(x,u_2(x))-cf(x,u_k(x))\Big]v_k(x)\ge 0.
\end{align}
Combining \eqref{AA2}, \eqref{Bbb1} and \eqref{Bbb2} we get that for every $k\ge 0$,
\begin{align}\label{B3}
\int_{\Omega}\Big(f(x,u_1)-f(x,u_2)\Big)v_k\;dx\ge c\int_{\Omega}f(x,v_k)v_k\;dx.
\end{align}
Now it follows from \eqref{Aa1}, \eqref{AA1} and \eqref{B3} that
\begin{align*}
\mathcal F_D(u_1,v_k)-\mathcal F_D(u_2,v_k)\ge& \int_{\Omega}|(L_D)^{\frac s2}(v_k)|^2\;dx +c\int_{\Omega}f(x,v_k)v_k\;dx\\
\ge& c\left(\int_{\Omega}|(L_D)^{\frac s2}(v_k)|^2\;dx +\int_{\Omega}f(x,v_k)v_k\;dx\right)
=c\mathcal F_D(v_k,v_k),
\end{align*}
and we have shown the claim \eqref{CLAIM}.

{\bf Step 2.} It follows from \eqref{CLAIM} that there is a constant $C>0$ such that for every $k\ge 0$, 
\begin{align}\label{claim1}
C\|v_k\|_{L^{2^\star}(\Omega)}^2
\le c\|v_k\|_{\mathbb H^s(\Omega)}^2\le c\mathcal F_D(v_k,v_k)\le \mathcal F_D(u_1,v_k)-\mathcal F_D(u_2,v_k)
=\int_{\Omega}(z_1-z_2)v_k\;dx.
\end{align}
Let $p_1\in [1,\infty]$ be such that $\frac{1}{p}+\frac{1}{2^\star}+\frac{1}{p_1}=1$ where  $2^\star=\frac{2N}{N-2s}>2$. Since $p>\frac{N}{2s}=\frac{2^\star}{2^\star-2}$, we have that
\begin{align}\label{eq-B}
\frac{1}{p_1}=1-\frac{1}{2^\star}-\frac{1}{p}>\frac{2^\star}{2^\star}-\frac{1}{2^\star}-\frac{2^\star-2}{2^\star}=\frac{1}{2^\star}\Longrightarrow p_1<2^\star.
\end{align}
Using the classical H\"older inequality and noticing that $v_k=0$ on $\Omega\setminus A_k$, we get from \eqref{claim1} that there is a constant $C=C(N,s,p)>0$ such that for every $k\ge 0$,
\begin{align}\label{est}
c\mathcal F_D(v_k,v_k)\le \int_{\Omega}(z_1-z_2)v_k\;dx=\int_{A_k}(z_1-z_2)v_k\;dx\le& \|z_1-z_2\|_{L^p(\Omega)}\|v_k\|_{L^{2^\star}(\Omega)}\|\chi_{A_k}\|_{L^{p_1}(\Omega)},
\end{align}
where $\chi_{A_k}$ denotes the characteristic function of the set $A_k$.
Using \eqref{claim1}, \eqref{est}, \eqref{sobo1} and the fact that $\int_{\Om}f(x,v_k)v_k\;dx\ge 0$, we get that there are two constants $C,C_1>0$ such that for every $k\ge 0$,
\begin{align*}
C\|v_k\|_{L^{2^\star}(\Omega)}^2 \le c\|v_k\|_{\mathbb H^{s}(\Omega)}^2\le c\mathcal F_D(v_k,v_k)
\le C_1 \|z_1-z_2\|_{L^p(\Omega)}\|v_k\|_{L^{2^\star}(\Omega)}\|\chi_{A_k}\|_{L^{p_1}(\Omega)}.
\end{align*}
This estimate implies that there is a constant $C>0$ such that for every $k\ge 0$,
\begin{align}\label{est2}
\|v_k\|_{L^{2^\star}(\Omega)}\le C \|z_1-z_2\|_{L^p(\Omega)}\|\chi_{A_k}\|_{L^{p_1}(\Omega)}.
\end{align}
Let $h>k$. Then $A_h\subset A_k$ and on $A_h$ we have $|v_k|\ge h-k$. It follows from \eqref{est2} that for every $h>k\ge 0$,
\begin{align}\label{B1}
\|\chi_{A_h}\|_{L^{2^\star}(\Omega)}\le C (h-k)^{-1}\|z_1-z_2\|_{L^p(\Omega)}\|\chi_{A_k}\|_{L^{p_1}(\Omega)}.
\end{align}
Let $\delta:=\frac{2^\star}{p_1}>1$ by \eqref{eq-B}. Then using the H\"older inequality again we get that there is a constant $C>0$ such that for every $k\ge 0$, 
\begin{align}\label{B2}
\|\chi_{A_k}\|_{L^{p_1}(\Omega)}\le C \|\chi_{A_k}\|_{L^{2^\star}(\Omega)}^\delta.
\end{align}
It follows from \eqref{B1} and \eqref{B2} that there is a constant $C>0$ such that for every $h>k\ge 0$,
\begin{align*}
\|\chi_{A_h}\|_{L^{2^\star}(\Omega)}\le C (h-k)^{-1}\|z_1-z_2\|_{L^p(\Omega)}\|\chi_{A_k}\|_{L^{2^\star}(\Omega)}^\delta.
\end{align*}
Using Lemma \ref{lem-01} with $\Xi(k)=\|\chi_{A_k}\|_{L^{2^\star}(\Omega)}$, we get that there is a constant $C_2>0$ such that
\begin{align*}
\|\chi_{A_K}\|_{L^{2^\star}(\Omega)}=0\;\mbox{ with }\;K=CC_2\|z_1-z_2\|_{L^p(\Omega)}.
\end{align*}
We have shown \eqref{EST2} and the proof is finished.
\end{proof}

\begin{remark}
{\rm We mention that all the results given in Proposition \ref{pro-dif-sol} remain true if one replaces the growth condition \eqref{growth} with the following local Lipschitz continuity condition: for all $M>0$ there exists a constant $L_{M}>0$ such that $f$ satisfies 
	\begin{align}\label{Lip-cond}
	|f(x,\xi)-f(x,\eta)|\leq L_{M}|\xi-\eta|,
	\end{align}
	for a.e. $x\in\Omega$ and $\xi,\eta\in\mathbb{R}$ with $|\eta|, |\xi|\leq M$. Of course in that case the constant $C$ that appears in \eqref{EST2} also depends on the Lipschitz constant $L_M$.
     A condition such as \eqref{Lip-cond} is needed to prove the $\mathbb{H}^{2s+\beta}(\Omega)$
     regularity for $u$ provided that $z \in \mathbb{H}^\beta(\Omega)$ where $0\le\beta < 1$
     (see \cite[Corollary~2.15]{HAntil_JPfefferer_MWarma_2016a}). This higher regularity is 
     important for finite element error estimates as shown in \cite[Section~4]{HAntil_JPfefferer_MWarma_2016a}.
}
\end{remark}

\subsection{The case of the fractional Laplace operator}\label{sec-int-frac}
In this section we consider the semilinear elliptic problem \eqref{ellip-int-Frac}. Firstly, 
the fractional Laplace operator $(-\Delta)^s$ is given formally for $x\in\RR^N$ by
\begin{align}\label{IFL}
(-\Delta)^su(x)=C_{N,s}\mbox{P.V.}\int_{\RR^N}\frac{u(x)-u(y)}{|x-y|^{N+2s}}\;dy
=C_{N,s}\lim_{\varepsilon\downarrow 0}\int_{\{y\in\RR^N:|x-y|\ge \varepsilon\}}\frac{u(x)-u(y)}{|x-y|^{N+2s}}\;dy,
\end{align}
whenever the limit exists, and $C_{N,s}$ is a normalization constant depending only on $N$ and $s$. We refer to \cite{Caf3,Caf1,NPV,W-Nodea} for the class of functions for which the limit exists and for further properties and applications of this operator.

Secondly, in order to give our notion of solution to \eqref{ellip-int-Frac} we need  the fractional order Sobolev space
\begin{align*}
H_0^{s}(\bOm):=\{u\in H^s(\RR^N):\; u=0\;\mbox{ in }\;\RR^N\setminus\Omega\}.
\end{align*}
For every $0<s<1$, we have that  $H_0^{s}(\bOm)$ endowed with the norm
\begin{align*}
\left(\int_{\RR^N}\int_{\RR^N}\frac{|u(x)-u(y)|^2}{|x-y|^{N+2s}}\;dxdy\right)^{\frac 12},
\end{align*}
is a Hilbert space, and we shall denote by $H^{-s}(\bOm)$ its dual. It is well known (see e.g. \cite{NPV}) that the embedding \eqref{inj1} holds with $H_0^s(\Omega)$ replaced by $H_0^s(\bOm)$. We notice that there is a priori no obvious inclusion between $H_0^s(\Omega)$ and $H_0^s(\bOm)$. In fact for an arbitrary bounded open set, the two spaces are different since by \cite{Val}, $\mathcal D(\Omega)$ is not always dense in $H_0^{s}(\bOm)$. But if $\Omega$ has a Lipschitz continuous boundary, then by  \cite{Val}, $\mathcal D(\Omega)$ is dense in $H_0^{s}(\bOm)$. In that case we have that $H_0^s(\Omega)=H_0^s(\bOm)$ if $\frac 12<s<1$. For $0<s\le\frac 12$, even if $\Omega$ is smooth, the two spaces do not coincide. In fact if $\Omega$ has a Lipschitz continuous boundary and $0<s\le \frac 12$, then by Remark \ref{rem-sob}, $H_0^s(\Omega)=H^s(\Omega)$. This shows that the constant function $1$ belongs to $H^s(\Omega)=H_0^s(\Omega)$, but clearly $1\not\in H_0^s(\bOm)$. 

Here is our notion of solution to the system \eqref{ellip-int-Frac}.

\begin{definition}
A $u\in H_0^{s}(\bOm)$ is said to be a weak solution of \eqref{ellip-int-Frac} if the identity 
\begin{align*}
\frac{C_{N,s}}{2}\int_{\RR^N}\int_{\RR^N}\frac{(u(x)-u(y))(v(x)-v(y))}{|x-y|^{N+2s}}\;dxdy+\int_{\Omega}f(x,u)v\;dx=\langle z,v\rangle_{H^{-s}(\bOm),H_0^{s}(\bOm)},
\end{align*}
holds for every $v\in  H_0^{s}(\bOm)$ and the right hand side makes sense.
\end{definition}

Thirdly, notice that the operator $(-\Delta)^s$ defined in \eqref{IFL} does not incorporate a boundary condition.  We let $(-\Delta)_D^s$ be the selfadjoint operator on $L^2(\Omega)$ associated with the bilinear form 
\begin{align*}
\mathbb E_D(u,v):=\frac{C_{N,s}}{2}\int_{\RR^N}\int_{\RR^N}\frac{(u(x)-u(y))(v(x)-v(y))}{|x-y|^{N+2s}}\;dxdy,\;u,v\in H_0^s(\bOm),
\end{align*}
in the sense that 
\begin{align*}
\begin{cases}
D((-\Delta)_D^s)=\left\{u\in H_0^s(\bOm),\;\exists\;w\in L^2(\Omega),\;\mathbb E_D(u,v)=(w,v)_{L^2(\Omega)}\;\forall v\in H_0^s(\bOm)\right\}\\
(-\Delta)_D^su=w.
\end{cases}
\end{align*}
Then $(-\Delta)_D^s$ is the realization in $L^2(\Omega)$ of $(-\Delta)^s$ with the Dirichlet exterior condition $u=0$ in $\RR^N\setminus\Omega$. More precisely, we have that
\begin{align*}
D((-\Delta)_D^s)=\{u\in H_0^s(\bOm):\; (-\Delta)_D^su\in L^2(\Omega)\},\;\; (-\Delta)_D^su=(-\Delta)^su.
\end{align*}
With this setting, the system \eqref{ellip-int-Frac} can be rewritten as
\begin{align*}
(-\Delta)_D^su+f(x,u)=z\quad\mbox{ in }\;\Omega.
\end{align*}
We also mention that even if we take $a_{ij}=\delta_{ij}$, that is, $L=-\Delta$, the operators $(L_D)^s$ and $(-\Delta)_D^s$ are different. More precisely their eigenvalues and eigenfunctions are different. We refer to \cite{BWZ,SeVa} for more details on this topic.

Finally, we notice the following.
\begin{remark}
{\rm 
Letting $\widetilde \cV_0:=H_0^s(\bOm)\cap L_F(\Omega)$, then all the results in Sections \ref{sec:weaksol} and \ref{sec-weak-sol} hold for the system \eqref{ellip-int-Frac} if one replaces in the statements and proofs $\cV_0$, $\mathbb H^s(\Omega)$ and $\mathbb H^{-s}(\Omega)$ by $\widetilde \cV_0$,  $H_0^s(\bOm)$ and $H^{-s}(\bOm)$, respectively and the form $\mathcal F_D$ by
\begin{align}\label{Form-IFL}
\mathbb F_D(u,v)=\frac{C_{N,s}}{2}\int_{\RR^N}\int_{\RR^N}\frac{(u(x)-u(y))(v(x)-v(y))}{|x-y|^{N+2s}}\;dxdy+\int_{\Omega}f(x,u)v\;dx,
\end{align}
for $u,v\in H_0^s(\bOm)$.
All the proofs follow similarly with very minor modifications.  In fact in the proofs of all the results in Sections \ref{sec:weaksol} and \ref{sec-weak-sol}, the main tool is the representation \eqref{int-rep} of the operator $(L_D)^s$. Here also, using the definition of $(-\Delta)_D^s$ and \eqref{Form-IFL} we get that for $u,v\in H_0^s(\bOm)$ (using that $u=v=0$ in $\RR^N\setminus\Omega$),
\begin{align*}
\langle (-\Delta)_D^su,v\rangle_{H^{-s}(\bOm),H_0^s(\bOm)}=&\frac{C_{N,s}}{2}\int_{\RR^N}\int_{\RR^N}\frac{(u(x)-u(y))(v(x)-v(y))}{|x-y|^{N+2s}}\;dxdy\\
=&\frac{C_{N,s}}{2}\int_{\Omega}\int_{\Omega}\frac{(u(x)-u(y))(v(x)-v(y))}{|x-y|^{N+2s}}\;dxdy\\
&+\frac{C_{N,s}}{2}\int_{\Omega}\int_{\RR^N\setminus\Omega}\frac{u(y)v(y)}{|x-y|^{N+2s}}\;dxdy
+\frac{C_{N,s}}{2}\int_{\RR^N\setminus\Omega}\int_{\Omega}\frac{u(x)v(x)}{|x-y|^{N+2s}}\;dxdy\\
=&\frac{C_{N,s}}{2}\int_{\Omega}\int_{\Omega}\frac{(u(x)-u(y))(v(x)-v(y))}{|x-y|^{N+2s}}\;dxdy
+\int_{\Omega}\tilde\kappa_s(x)u(x)v(x)\;dx,
\end{align*} 
where we have set
\begin{align*}
\tilde\kappa_s(x)=C_{N,s}\int_{\RR^N\setminus\Omega}\frac{dy}{|x-y|^{N+2s}}.
\end{align*}
This shows that we have a similar representation as in \eqref{int-rep} where in the present situation the kernel $K_s^L(x,y)$ and $\kappa_s(x)$ are replaced by $C_{N,s}|x-y|^{-N-2s}$ and $\tilde\kappa_s(x)$, respectively. 
}
\end{remark}

\subsection{An Example}\label{s:ex}
We conclude this section with the following example.

\begin{example}\label{exam} \rm 
Let $q\in [1,\infty)$ and let $b: \Omega\to (0,\infty)$ be a  function in $L^\infty(\Omega)$ such that $b(x)> 0$ for a.e. $x\in\Omega$. Define $f:\Omega\times\mathbb R\to\mathbb R$ by $f(x,t)=b(x)|t|^{q-1}t$. It is clear that $f$ satisfies Assumption \ref{assum2} and the associated function $F:\Omega\times\mathbb R\to [0,\infty)$ is given by $F(x,t)=\frac{1}{q+1}b(x)|t|^{q+1}$. For a.e. $x\in\Omega$, the inverse $\widetilde f(x,\cdot)$ of $f(x,\cdot)$ is given by $\widetilde f(x,t)=\left(b(x)\right)^{-\frac 1q}|t|^{\frac{1-q}{q}}t$. Therefore, the complementary function $\widetilde F$ of $F$ is given by $\widetilde F(x,t)=\frac{q}{q+1}\left(b(x)\right)^{-\frac 1q}|t|^{\frac{q+1}{q}}$. Hence,
\begin{align*}
tf(x,t)=(q+1) F(x,t)\;\mbox{ and }\; t\widetilde f(x,t)=\frac{q+1}{q}\widetilde F(x,t),
\end{align*}
and we have shown that Assumption \ref{assum3} is also satisfied. 

Next, let us show that $f$ satisfies the growth condition \eqref{growth}.  If
$\xi=0$ or $\eta=0$ then the assertion \eqref{growth} is obvious since
$f(x,0)=0$.
  Hence, we assume that $\xi\not=0$ and $\eta\not=0$. Moreover, since
  $|f(x,-\gamma)|=|f(x,\gamma)|$ we may assume
  without loss of generality that $|\eta|\geq|\xi|$.
  Hence, there exists $\alpha\in\RR$ with $|\alpha|\geq 1$ such that
  $\eta=\alpha\xi$. It follows that
  \begin{align*}
|f(x,\xi-\eta)| =|b(x)|\cdot|\xi-\alpha\xi|^q=|b(x)|\cdot|1-\alpha|^q|\xi|^q
\end{align*}
  and
\begin{align*}
    |f(x,\xi)-f(x,\eta)| =&
     |b(x)|\cdot||\xi|^q\sgn(\xi)-|\alpha|^q|\xi|^q\sgn(\alpha)\sgn(\xi)| \\
        =& |b(x)|\cdot|\xi|^q\cdot|1-|\alpha|^q\sgn(\alpha)|.
  \end{align*}
  The proof is done if $c|1-\alpha|^q\leq|1-|\alpha|^q\sgn(\alpha)|$
  for all $\alpha\in\RR\setminus (-1,1)$. For $\alpha=1$ this inequality is obvious.
  For $\alpha>1$ this inequality is equivalent to
  \begin{align*}
 c(\alpha-1)^q\leq\alpha^q-1 \Longleftrightarrow c\leq \frac{\alpha^{q}-1}{(\alpha-1)^q}=g(\alpha),
  \end{align*}
  where $g:[1,\infty)\to\R$ is given by $g(x):=(x^q-1)/(x-1)^q$. Differentiating shows that
  $g'(x)\leq 0$, hence,
  \[\inf_{x>1} g(x)=\lim_{x\to \infty} g(x)=1\geq c.\]
  To finish, we prove
  the case $\alpha\leq -1$. In this case, we have to show that
  \begin{align*}
 c(1+|\alpha|)^q\leq 1+|\alpha|^q \Leftrightarrow
     c\leq \frac{1+|\alpha|^q}{(1+|\alpha|)^q}=h(|\alpha|),
  \end{align*}
  where $h:[1,\infty)\to\R$ is given by $h(x)=(1+x^q)/(1+x)^q$. Differentiating shows that
  $h'(x)\geq 0$, hence,
  \begin{align*}
\inf_{x\geq 1} h(x)=h(1)=c,
  \end{align*}
  and this completes the proof of \eqref{growth}.
In particular, we have that $f$ also satisfies the condition \eqref{Lip-cond}.
\end{example}

\section{Optimal control problem: existence}\label{s:red_prob}

Now as the state equation \eqref{ellip-pro} has a unique solution, it follows that the 
{\it control-to-state map} (solution map) 

\begin{align}\label{eq:SLinf}
 S : L^\infty(\Omega) \rightarrow \mathbb{H}^s(\Om) \cap L^\infty(\Omega), \;\; z \mapsto S(z) = u, 
\end{align}
is well defined. 
We notice that $S$ is also well defined as a map from $L^p(\Omega)$ into $\mathbb{H}^s(\Om) \cap L^\infty(\Omega)$ where $p$ is as in \eqref{cond-p}. 
We further remark that we have defined $S$ on $L^\infty(\Om)$ but we shall see below 
that $J_2$ will be defined over $L^2(\Om)$. This mismatch leads to the well-established
concept of the so-called 2-norm discrepancy \cite[Section~4]{MR2583281} 
while studying the second order sufficient conditions in Section~\ref{s:oc}. Remarkably,  for enough certain $(N,s)$ pairs we can set $p=2$ and define 
 \begin{equation}\label{eq:SL2}
   S : L^2(\Om) \rightarrow \mathbb{H}^s(\Om) \cap L^\infty(\Omega),
 \end{equation}
and as a result we can avoid the 2-norm discrepancy. This is due to the following result.

\begin{lemma}\label{lem:nodisp}
Let $0<s<1$ and $N\in\NN$. If $N<4s$, then one can take $p=2$ in \eqref{cond-p} to obtain all the results in Theorem \ref{theo-bound} and Proposition \ref{pro-dif-sol}.
\end{lemma}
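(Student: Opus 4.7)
The plan is simply to inspect condition \eqref{cond-p} case by case and verify that $p=2$ qualifies precisely when $N<4s$, then invoke the cited results verbatim since they only depend on the validity of \eqref{cond-p} for the exponent being used.

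First, I would split according to the three branches of \eqref{cond-p}. In the regime $N>2s$, the requirement on $p$ is the strict inequality $p>N/(2s)$; choosing $p=2$ is therefore admissible if and only if $2>N/(2s)$, i.e., $N<4s$, which is exactly the hypothesis. In the borderline case $N=2s$, we need $p>1$, and $p=2$ clearly works; note moreover that $N=2s<4s$ so this case lies within the hypothesis. Finally, in the regime $N<2s$, the stated condition in \eqref{cond-p} is $p=1$, but since $\Omega$ is bounded we have the continuous embedding $L^2(\Omega)\hookrightarrow L^1(\Omega)$, so every $z\in L^2(\Omega)$ in particular lies in $L^1(\Omega)$ and Theorem~\ref{theo-bound} and Proposition~\ref{pro-dif-sol} apply with data viewed in $L^1(\Omega)$. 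Again $N<2s<4s$ holds automatically.

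Combining the three cases, under the single standing assumption $N<4s$ one may legitimately take $p=2$ in the hypotheses of Theorem~\ref{theo-bound} and Proposition~\ref{pro-dif-sol}, so that every $z\in L^2(\Omega)$ produces a weak solution in $L^\infty(\Omega)$ with the $L^\infty$--$L^2$ estimate
\begin{equation*}
\|u\|_{L^\infty(\Omega)}\le C\|z\|_{L^2(\Omega)},
\end{equation*}
and analogously for the Lipschitz estimate on differences of solutions.

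There is no real obstacle here; the statement is an arithmetic unpacking of \eqref{cond-p}. The only mildly subtle point is that one should not overlook the branch $N<2s$, where the naive reading would suggest $p=2$ is stronger than required, but the trivial embedding $L^2(\Omega)\hookrightarrow L^1(\Omega)$ disposes of it. I would keep the write-up to a few lines, essentially listing the three cases and the trivial verification in each.
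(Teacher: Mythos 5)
Your case-by-case verification is correct and is essentially the paper's own argument, which simply observes that the claim is a direct consequence of the definition of $p$ in \eqref{cond-p}. Your extra remark on the branch $N<2s$ (using $L^2(\Omega)\hookrightarrow L^1(\Omega)$ on the bounded set $\Omega$) is a fine way to make the statement literal in that case and does not change the substance.
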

 \begin{proof}
  The proof is a direct consequence of the definition of $p$ in \eqref{cond-p}.
 \end{proof} 
 If $(N,s)$ fulfills the conditions of Lemma~\ref{lem:nodisp}, then we define
 $S$ as in \eqref{eq:SL2}, otherwise we let $S$ be as in \eqref{eq:SLinf}. In order 
 to accommodate both cases \eqref{eq:SLinf} and \eqref{eq:SL2}, we define
 \begin{equation}\label{eq:SL2Linf}
   S : L^{\widetilde{p}}(\Om) \rightarrow \mathbb{H}^s(\Om) \cap L^\infty(\Omega), 
   \quad \mbox{ where } \quad 
   \widetilde{p} = \left\{ \begin{array}{ll}
                             2  & \mbox{if Lemma~\ref{lem:nodisp} holds}, \\
                             +\infty & \mbox{otherwise} . 
                           \end{array}
 \right.
 \end{equation} 
We shall see that when $\widetilde{p} = 2$ we do not have the 2-norm discrepancy. 

We begin by showing that under the growth assumption \eqref{growth}, the mapping $S$ is Lipschitz continuous.

\begin{lemma}[\bf $S$ is Lipschitz continuous]
\label{lem:S_Lip}
Let Assumption~\ref{assum3} hold. Let $z_1,z_2\in \mathbb H^{-s}(\Omega)$ and $u_1,u_2\in\mathbb{H}^s(\Om)$ be the corresponding weak solutions of \eqref{ellip-pro}. Then there is a constant $C=C(N,s,\Omega)>0$ such that
\begin{align}\label{EST1-1}
\|u_1-u_2\|_{L^2(\Omega)}\le C\|u_1-u_2\|_{\mathbb H^s(\Omega)}\le C\|z_1-z_2\|_{\mathbb H^{-s}(\Omega)}.
\end{align} 
In addition, let $p$ be as in \eqref{cond-p}.
Let Assumption~\ref{assum3} hold and assume that $f$ satisfies the growth condition \eqref{growth}. Let $u_1, u_2 \in \mathbb{H}^s(\Om) \cap L^\infty(\Omega)$
be the weak solutions to \eqref{ellip-pro} with right hand sides $z_1$ and $z_2$ in $L^p(\Omega)$, 
respectively. Then  there is a constant $C=C(N,p,s,\Omega)>0$ such that
\begin{equation}\label{Lip}
\|u_1-u_2\|_{L^\infty(\Omega)} + \|u_1-u_2\|_{\mathbb H^s(\Omega)} \le C \|z_1-z_2\|_{L^p(\Omega)}.
\end{equation}
\end{lemma}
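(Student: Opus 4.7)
The first chain of inequalities in \eqref{EST1-1} is essentially a restatement of Proposition~\ref{pro-34}, where the left bound follows from the continuous embedding $\mathbb{H}^s(\Omega) \hookrightarrow L^2(\Omega)$ and the right bound is proved there verbatim. So the real content is the second assertion \eqref{Lip}, and the plan is to obtain it by \emph{stitching together} Propositions~\ref{pro-34} and \ref{pro-dif-sol} via a continuous embedding $L^p(\Omega) \hookrightarrow \mathbb{H}^{-s}(\Omega)$ valid for every $p$ in the admissible range \eqref{cond-p}.

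First I would establish the embedding $L^p(\Omega) \hookrightarrow \mathbb{H}^{-s}(\Omega)$ by duality from the Sobolev embeddings in \eqref{inj1}, transferred to $\mathbb{H}^s(\Omega)$ via the identification \eqref{inf}. In the case $N>2s$, the embedding $\mathbb{H}^s(\Omega) \hookrightarrow L^{2^\star}(\Omega)$ dualizes to $L^{(2^\star)'}(\Omega) \hookrightarrow \mathbb{H}^{-s}(\Omega)$ with $(2^\star)' = 2N/(N+2s)$; since the condition $p > N/(2s)$ in \eqref{cond-p} implies $p > (2^\star)'$, and $\Omega$ is bounded, we obtain $L^p(\Omega) \hookrightarrow L^{(2^\star)'}(\Omega) \hookrightarrow \mathbb{H}^{-s}(\Omega)$. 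The borderline case $N=2s$ uses $\mathbb{H}^s(\Omega) \hookrightarrow L^q(\Omega)$ for every $q \in [1,\infty)$, so that $L^{p}(\Omega) \hookrightarrow \mathbb{H}^{-s}(\Omega)$ for any $p > 1$. Finally, if $N<2s$ (so $N=1$ and $s>\tfrac12$), we use \eqref{sobo2} to deduce $\mathbb{H}^s(\Omega) \hookrightarrow L^\infty(\Omega)$, whose dual gives $L^1(\Omega) \hookrightarrow \mathbb{H}^{-s}(\Omega)$.

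With this embedding in hand, the $\mathbb{H}^s$ half of \eqref{Lip} is immediate: applying \eqref{EST1} from Proposition~\ref{pro-34} together with the embedding yields
\begin{align*}
\|u_1-u_2\|_{\mathbb{H}^s(\Omega)} \le \|z_1-z_2\|_{\mathbb{H}^{-s}(\Omega)} \le C \|z_1-z_2\|_{L^p(\Omega)}.
\end{align*}
For the $L^\infty$ half, since $u_1, u_2 \in \mathcal{V}_0 \cap L^\infty(\Omega)$ and the growth condition \eqref{growth} on $f$ is assumed, Proposition~\ref{pro-dif-sol} applies directly to give $\|u_1-u_2\|_{L^\infty(\Omega)} \le C \|z_1-z_2\|_{L^p(\Omega)}$. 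Summing the two estimates yields \eqref{Lip}.

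There is no serious obstacle here: the hard analytical work has already been carried out in Propositions~\ref{pro-34} and \ref{pro-dif-sol}, and the present lemma merely repackages those estimates into a single Lipschitz statement convenient for the control-theoretic analysis. The only point demanding care is the case-by-case check of the embedding $L^p(\Omega) \hookrightarrow \mathbb{H}^{-s}(\Omega)$, but each case follows routinely from the known Sobolev embeddings plus the boundedness of $\Omega$.
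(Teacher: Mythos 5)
Your proposal is correct and follows essentially the same route as the paper: the $\mathbb H^s$-estimate comes from Proposition~\ref{pro-34} combined with the embedding $L^p(\Omega)\hookrightarrow\mathbb H^{-s}(\Omega)$, and the $L^\infty$-estimate is a direct application of Proposition~\ref{pro-dif-sol} under the growth condition \eqref{growth}. The only difference is that you spell out the case-by-case duality verification of $L^p(\Omega)\hookrightarrow\mathbb H^{-s}(\Omega)$, which the paper uses implicitly; that check is accurate and harmless.
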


\begin{proof}
 Firstly,  \eqref{EST1-1} is due to Proposition~\ref{pro-34}. 
 In this case one does not need the growth assumption \eqref{growth} on $f$. 
 It follows from \eqref{EST1} that there is a constant $C>0$ such that
\begin{align*}
 \|u_1-u_2\|_{\mathbb H^s(\Omega)} \le C \|u_1-u_2\|_{\mathbb H^{-s}(\Omega)}\le C \|z_1-z_2\|_{L^p(\Omega)}.
\end{align*}
Secondly, since by assumption $f$ satisfies \eqref{growth}, it follows from \eqref{EST2} in Proposition \ref{pro-dif-sol} that
\begin{align*}
\|u_1-u_2\|_{L^\infty(\Omega)} \le C \|z_1-z_2\|_{L^p(\Omega)}.
\end{align*}
Now \eqref{Lip} follows from the above two estimates. The estimate \eqref{Lip} shows that $S$ is Lipschitz continuous as a map from $L^p(\Omega)$ into $\mathbb H^s(\Omega)\cap L^\infty(\Omega)$ and hence,  as a map from $L^\infty(\Omega)$ into $\mathbb H^s(\Omega)\cap L^\infty(\Omega)$. 
\end{proof}

    \begin{remark}\label{rem:pvstp}
        {\rm In view of Lemma~\ref{lem:nodisp} we can replace $p$ by $\widetilde{p}$ 
        in Lemma~\ref{lem:S_Lip}.}
    \end{remark}

Now we recall the cost functional from \eqref{cost}, i.e., $J(u,z) := J_1(u) + J_2(z)$. We let $J_1 : L^2(\Omega) \rightarrow (-\infty,\infty]$ and $J_2 : L^2(\Omega) \rightarrow (-\infty,\infty]$, and as a result we can write the reduced minimization problem
\begin{equation}\label{RPS}
 \min_{z\in Z_{ad}} \mathcal{J}(z) := J_1(S(z)) + J_2(z).
\end{equation}
Next we state the existence of solutions to \eqref{RPS} and equivalently to the system \eqref{cost}--\eqref{ctrl}.

\begin{theorem}[\bf Existence of optimal control]
\label{thm:exist}
Let the assumptions of Theorem~\ref{theo-bound} hold. Assume in addition that $f$ satisfies the growth condition \eqref{growth} and that
\begin{align}\label{f-L2}
f(\cdot,w(\cdot))\in L^2(\Omega)\;\mbox{ for every } w\in L^\infty(\Omega).
\end{align}
Then under the following assumptions on $J_1$ and $J_2$,
\begin{enumerate}[$(i)$]
\item $J_1 :L^2(\Omega) \rightarrow [0,+\infty]$ is proper and lower semicontinuous;
 \item $J_2 : L^2(\Omega) \rightarrow (-\infty,+\infty]$ is proper, convex, 
 lower-semicontinuous and bounded from below;
\end{enumerate} 
the minimization problem \eqref{RPS} admits a solution. 
\end{theorem}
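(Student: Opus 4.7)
The plan is to carry out the direct method of the calculus of variations. Since $J_1\ge 0$ and $J_2$ is bounded below, the reduced functional $\mathcal J(z)=J_1(S(z))+J_2(z)$ is bounded below on $Z_{ad}$; I would pick a minimizing sequence $\{z_n\}\subset Z_{ad}$. Because $Z_{ad}$ is a bounded, closed, convex subset of $L^\infty(\Omega)$, it is bounded in $L^2(\Omega)$ and weakly closed there, so a subsequence (not relabeled) satisfies $z_n\rightharpoonup \bar z$ in $L^2(\Omega)$ with $\bar z\in Z_{ad}$.

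Next I would set $u_n:=S(z_n)$. By Theorem~\ref{theo-bound} and the a priori bound $\|z_n\|_{L^\infty(\Omega)}\le \max(\|z_a\|_{L^\infty},\|z_b\|_{L^\infty})$, the sequence $\{u_n\}$ is bounded in $\mathbb H^s(\Omega)\cap L^\infty(\Omega)$. Using the compact embedding $\mathbb H^s(\Omega)\hookrightarrow L^2(\Omega)$ and reflexivity, I extract a further subsequence with $u_n\rightharpoonup \bar u$ in $\mathbb H^s(\Omega)$, $u_n\to \bar u$ in $L^2(\Omega)$ and, upon another extraction, $u_n\to \bar u$ pointwise a.e. in $\Omega$. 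By $L^\infty$-weak-$*$ lower semicontinuity, $\bar u\in L^\infty(\Omega)$ with $\|\bar u\|_{L^\infty}\le\liminf_n\|u_n\|_{L^\infty}$.

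The main step is to pass to the limit in the state equation $\mathcal F_D(u_n,v)=\langle z_n,v\rangle$ for $v\in\mathcal V_0$. The linear part is immediate: $(L_D)^{s/2}u_n\rightharpoonup (L_D)^{s/2}\bar u$ in $L^2(\Omega)$ by the weak convergence in $\mathbb H^s(\Omega)$ together with \eqref{norm-2}, and $\langle z_n,v\rangle\to \langle\bar z,v\rangle$ for $v\in L^2(\Omega)\supset \mathcal V_0$. For the nonlinear term, the key obstacle, I would exploit that $|u_n(x)|\le M$ for some $M>0$ uniformly in $n$; since $f(x,\cdot)$ is odd and increasing, $|f(x,u_n(x))|\le f(x,M)$, and the constant function $M$ belongs to $L^\infty(\Omega)$, so the assumption \eqref{f-L2} yields $f(\cdot,M)\in L^2(\Omega)$. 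Combined with the a.e. convergence $f(x,u_n(x))\to f(x,\bar u(x))$ (continuity of $f(x,\cdot)$), dominated convergence gives $f(\cdot,u_n)\to f(\cdot,\bar u)$ in $L^2(\Omega)$, so the nonlinear term converges to $\int_\Omega f(x,\bar u)v\,dx$. Hence $\bar u=S(\bar z)$.

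Finally I would combine the lower semicontinuity properties: $J_1(\bar u)\le\liminf_n J_1(u_n)$ from the strong $L^2$-convergence of $u_n$ and hypothesis $(i)$, while $J_2$ is convex and strongly lower semicontinuous on $L^2(\Omega)$, hence weakly lower semicontinuous, giving $J_2(\bar z)\le\liminf_n J_2(z_n)$. Adding these,
\begin{equation*}
\mathcal J(\bar z)=J_1(S(\bar z))+J_2(\bar z)\le \liminf_{n\to\infty}\bigl(J_1(u_n)+J_2(z_n)\bigr)=\inf_{z\in Z_{ad}}\mathcal J(z),
\end{equation*}
so $\bar z$ is a minimizer, which concludes the proof.
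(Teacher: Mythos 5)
Your proposal is correct and follows essentially the same route as the paper's proof: bounded minimizing controls, uniform $L^\infty$ and $\mathbb H^s$ bounds on the states, compactness and a.e.\ convergence, dominated convergence via $|f(x,u_n(x))|\le f(x,M)$ with $f(\cdot,M)\in L^2(\Omega)$ from \eqref{f-L2}, passage to the limit in the weak formulation, and weak lower semicontinuity of $J_1$ and $J_2$. The only detail the paper records that you skip is the verification that $\bar u\in L_F(\Omega)$ (so that $\bar u\in\mathcal V_0$ and is a weak solution in the sense of the definition), which follows immediately from $\bar u\in L^\infty(\Omega)$, your bound $|f(x,\bar u)|\le f(x,M)$ and the $(\triangle_2)$-condition \eqref{delta-2}.
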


\begin{proof}
We begin by noticing that $\mathcal{J}$ is bounded from below. Therefore, the infimum 
\[
 j := \inf_{z\in\Zad} \mathcal{J}(z)
\]
exists. Let $\{(u_n,z_n)\}_{n\in\NN}$ be a minimizing sequence, that is, 
$z_n\in\Zad$ and $u_n = S(z_n)$, for $n\in\NN$, are such that $\mathcal{J}(z_n)\rightarrow j$ 
as $n\rightarrow\infty$. 

Notice that $\Zad\subset L^\infty(\Omega) \subset L^p(\Omega)$ for every $1< p<\infty$. Let $p>\frac{N}{2s}$.  Since $z_n\in\Zad$ we have that $\|z_n\|_{L^p(\Omega)}\le \|z_n\|_{L^\infty(\Omega)}\le \max\{\|z_a\|_{L^\infty(\Omega)},\|z_b\|_{L^\infty(\Omega)}\}$.
Since $L^p(\Omega)$ is a reflexive Banach space,  we have that by taking a subsequence if necessary, we may assume that
$\{z_n\}_{n\in\NN}$ converges weakly in $L^p(\Omega)$ to 
some $\bar{z} \in \Zad$, i.e., 
$ z_n \rightharpoonup \bar{z} \quad \mbox{as} \quad n \rightarrow \infty $. 
This $\bar{z}$ is the candidate for our optimal control. Notice that the aforementioned argument covers both cases of $\widetilde{p}$ defined in \eqref{eq:SL2Linf}.

Next we shall show that the state $\{u_n\}_{n\in\NN}$ converges, as $n\to\infty$,  to some $\bar{u}$ 
in a suitable sense and $(\bar{u},\bar{z})$ satisfies the state equation. Towards this 
end we introduce the sequence 
\[
 b_n(\cdot) := f(\cdot,u_n(\cdot)), \quad n \in \NN . 
\]
Since 

\begin{align}\label{u-boun}
\|u_n\|_{L^\infty(\Omega)} \le C\|z_n\|_{L^p(\Omega)}\le C \max\{\|z_a\|_{L^\infty(\Omega)},\|z_b\|_{L^\infty(\Omega)}\},
\end{align}
it follows from the fact that $f(x,\cdot)$ is increasing  and  \eqref{f-L2} that 
$\{b_n\}_{n\in\NN}$ is bounded in  $L^2(\Omega)$. As a consequence, taking a subsequence if necessary, we may assume that 
$b_n \rightharpoonup b$ in $L^2(\Omega)$ as $n\to\infty$. 
Notice that for every $n\in\NN$,  $u_n$ satisfies the identity
\begin{align}\label{eq-B-2}
 \int_{\Omega}(L_D)^{\frac s2}u_n (L_D)^{\frac s2}v\;dx 
     =\int_{\Omega} B_nv\;dx, \quad \forall \;v \in \cV_0, 
\end{align}
where $B_n:=-f(\cdot,u_n) + z_n$ converges weakly in $L^2(\Omega)$ to $-b+\bar{z}$ as $n\to\infty$. Taking $v=u_n$ as a test function in \eqref{eq-B-2}, we get that there is a constant $C>0$ (independent of $n$) such that

\begin{align*}
\|u_n\|_{\mathbb H^s(\Omega)}\le C\|B_n\|_{L^2(\Omega)}.
\end{align*}
Since $\{B_n\}_{n\in\NN}$ is a bounded sequence in $L^2(\Omega)$, it follows from the preceding estimate that $\{u_n\}_{n\in\NN}$ is bounded in $\mathbb H^s(\Omega)$, hence as before, we may assume that $u_n\rightharpoonup \bar u$ in $\mathbb H^s(\Omega)$ as $n\to\infty$ and thus, strongly in $L^2(\Omega)$ since the embedding $\mathbb H^s(\Omega)\hookrightarrow L^2(\Omega)$ is compact. 
Since by \eqref{u-boun}, $\{u_n\}_{n\in\NN}$ is uniformly bounded in $L^\infty(\Omega)$ and by the uniqueness of the limit,
we can deduce that $\bar u\in L^\infty(\Omega)$. We have already shown that $\bar u\in\mathbb H^s(\Omega)$.
Thus, using \eqref{f-L2} and  \eqref{delta-2}, we get that 
\begin{align*}
\int_{\Omega}F(x,\bar u)\;dx\le \int_{\Omega}\bar uf(x,\bar u)\;dx<\infty.
\end{align*}
This implies that $\bar u\in L_F(\Omega)$ and thus, $\bar u\in\mathcal V_0=\mathbb H^s(\Omega)\cap L_F(\Omega)$.
Next, we show that $\bar{u}$ is the weak solution associated with $\bar{z}$.  Firstly, since $u_n$ converges a.e. to $\bar u$ in $\Omega$ as $n\to\infty$, it follows from the continuity of $f(x,\cdot)$ that $f(\cdot,u_n(\cdot))\to f(\cdot,\bar u(\cdot))$ a.e. in $\Omega$ as $n\to\infty$. Secondly, as $u_n,\bar{u}\in L^\infty(\Omega)$ for every $n\in\NN$, it follows from \eqref{f-L2} that $f(\cdot,u_n(\cdot)), f(\cdot,\bar u(\cdot))\in L^2(\Omega)$ for every $n\in\NN$. Thirdly, it follows from \eqref{u-boun} that  there is a constant $M>0$ (independent of $n$) such that $|u_n(x)|\le M$ for a.e. $x\in\Omega$ . Since $f(x,\cdot)$ is strictly  increasing and $f(x,0)=0$, for a.e. $x\in\Omega$, we have that $|f(x,u_n(x))|\le f(x,M)$ and $f(\cdot,M)\in L^2(\Omega)$ by \eqref{f-L2}. Therefore, using the Lebesgue Dominated Convergence Theorem, we get that $f(\cdot,u_n(\cdot))\to f(\cdot,\bar u(\cdot))$ in $L^2(\Omega)$ as $n\to\infty$.
Thus, taking the limit as $n\to\infty$
in both sides of the following identity 
\[
 \int_{\Omega}(L_D)^{\frac s2}u_n (L_D)^{\frac s2}v\;dx +\int_{\Omega}f(x,u_n)v\;dx
     =\int_{\Omega}z_nv\;dx, \quad \forall \;v \in \cV_0 ,
\]
and using all the above convergences, we can conclude that $\bar{u}$ is the weak solution of \eqref{ellip-pro} corresponding to the right hand side
$\bar{z}$. It remains to show that $\bar{z}$ is the optimal control. 
This is due to the lower semicontinuity of $J_1$, i.e., since $S(z_n)\to S(\bar{z})$ in $L^2(\Omega)$ as $n\to\infty$, we have that 
$\liminf_{n\to\infty} J_1(S(z_n)) \ge J_1(S(\bar{z}))$. Since $J_2$ is  convex, proper and lower semicontinuous  (cf.~(ii)), it follows that it is weakly lower semicontinuous (see e.g. \cite[Theorem~3.3.3]{ABM}), i.e., $\liminf_{n\to\infty} J_2(z_n) \ge 
J_2(\bar{z})$. It then follows that 
\[
\inf_{z\in Z_{ad}} \mathcal{J}(z)
= \liminf_{n\rightarrow\infty} J(u_n,z_n) 
\ge J(S(\bar{z}),\bar{z}) \ge \inf_{z\in Z_{ad}} \mathcal{J}(z),
\]
and this completes the proof.
\end{proof}

\begin{remark}\label{new-rem}
{\rm We notice the following facts.
\begin{enumerate}
\item First, we mention that the nonlinearity $f$ given in 
Example \ref{exam} also satisfies  \eqref{f-L2}.   
\item Second, we  notice that all the results proved in this section also hold for  
$\widetilde S:L^{\widetilde{p}}\to H_0^{s}(\bOm)\cap L^\infty(\Omega)$, that is, the solution operator to  \eqref{ellip-int-Frac}.
\end{enumerate}
}
\end{remark}

\begin{remark}[\bf Nonsmooth cost functionals]
\label{rem:non_smooth}
{\em 
Notice that the condition $(i)$ in Theorem~\ref{thm:exist} already allows nonsmooth $J_1$ such as
$J_1(u):=\|u-u_d\|_{L^1(\Omega)}$ which is non-smooth but Lipschitz continuous, where $u_d \in L^1(\Omega)$ is given. 
The proof in Theorem~\ref{thm:exist} extends in a straightforward manner to other nonsmooth
control regularizations such as BV-regularization, i.e., when  
$J_2(z) := \int_\Omega |\nabla z|$ or when 
$J(u,z) := J_1(u) + J_2(z) + \int_\Omega |z| \;dx$ (with $J_1$ and $J_2$ as in Theorem~\ref{thm:exist}) 
by following \cite[Theorem~3.4]{antil2016optimal}, see also \cite{antil2018b}}.
\end{remark}


\section{Optimal control problem: first and second order optimality conditions}
\label{s:oc}

Before we investigate  more regularity of the control-to-state map, we make the following further regularity assumptions on the nonlinearity $f$.

\begin{assumption}\label{assum-diff}
We assume the following.
\begin{enumerate}[(i)]
\item The function $f(x,\cdot)$ is $k$-times, with $k=1,2$, continuously differentiable for a.e. $x\in\Omega$.

\item 
For all $M>0$ there 
exists a constant $L_{M}>0$ such that $f$ satisfies \eqref{Lip-cond} and
\[
  \left| \frac{\partial^k f}{\partial u^k} (x,\xi) - \frac{\partial^k f}{\partial u^k} (x,\eta)\right|
   \le L_{M}|\xi-\eta| , 
\]
for a.e. $x\in \Omega$ and all $\xi,\eta \in \RR$ with $|\xi|,|\eta| \le M$. 

\item $\displaystyle\frac{\partial f}{\partial u}(\cdot,0) \in L^\infty(\Omega)$. 

\item $\displaystyle\frac{\partial^2f}{\partial u^2}(\cdot,0) \in L^\infty(\Om)$.
\end{enumerate}
\end{assumption}

\begin{remark}\rm 
We mention the following facts.
\begin{enumerate}
\item For notational convenience, we will write $f_u$ and $f_{uu}$ instead of $\frac{\partial f}{\partial u}$ and $\frac{\partial^2 f}{\partial u^2}$, respectively. 
\item If $f$ satisfies Assumption \ref{assum-diff}(i)-(ii) with $k=1$ and $u\in L^\infty(\Om)$, then (iii) implies that there is a constant $C>0$ such that
\begin{align}\label{eq:fubd}
\| f_u(\cdot,u)\|_{L^\infty(\Omega)} &\le \| f_u(\cdot,u)-f_u(\cdot,0)\|_{L^\infty(\Omega)}
 + \| f_u(\cdot,0)\|_{L^\infty(\Omega)} \nonumber \\
 &\le C\left( \|u\|_{L^\infty(\Omega)} + \| f_u(\cdot,0)\|_{L^\infty(\Omega)} \right)< \infty.
\end{align}
\item Similarly, if $f$ satisfies Assumption~\ref{assum-diff}(i)-(ii) with $k=2$ and $u\in L^\infty(\Om)$, then (iv) implies that there is a constant $C>0$ such that
\begin{align}\label{eq:fuubd}
\| f_{uu}(\cdot,u)\|_{L^\infty(\Omega)} \le C \left(\|u\|_{L^\infty(\Omega)} + \| f_{uu}(\cdot,0)\|_{L^\infty(\Omega)}\right) < \infty.
\end{align}
\end{enumerate}
\end{remark}

Next we show that $S$ is twice continuously Fr\'echet differentiable. 

\begin{lemma}[\bf Twice Fr\'echet differentiability of $S$]
\label{lem:S2d}
Let Assumptions \ref{assum3} and \ref{assum-diff} hold. 
Then the mapping 
$S:L^{\widetilde{p}}(\Omega) \rightarrow \mathbb H^s(\Omega) \cap L^\infty(\Omega)$, where 
$\widetilde{p}$, is as in \eqref{eq:SL2Linf}, 
is twice continuously Fr\'echet differentiable. Moreover, for all $z,\zeta \in 
L^{\widetilde{p}}(\Omega)$, 
$S'(z)\zeta = u_{\zeta} \in \mathbb H^s(\Omega) \cap L^{\infty}(\Om)$
is defined as the unique weak solution of 
\begin{equation}\label{eq:uv}
 (L_D)^s u_{\zeta} + f_u(\cdot,u)u_{\zeta} 
  = \zeta \quad \mbox{in } \Omega ,
\end{equation}
where $u=S(z)$. Furthermore, for every $z,\zeta_1,\zeta_2 \in L^{\widetilde{p}}(\Omega)$,
\begin{align*}
 S''(z)[\zeta_1,\zeta_2]:=(S''(z)\zeta_1)\zeta_2 = u_{\zeta_1,\zeta_2}\in
 \mathbb H^s(\Omega) \cap L^{\infty}(\Om), 
 \end{align*}
 is the unique weak solution of 
\begin{equation}\label{eq:uvv}
 (L_D)^s u_{\zeta_1,\zeta_2} +f_u(\cdot,u)u_{\zeta_1,\zeta_2} 
  = - f_{uu}(\cdot,u)u_{\zeta_1}u_{\zeta_2}  \quad \mbox{in } \Omega ,
\end{equation}
where $u=S(z)$ and $u_{\zeta_i}=S'(z)\zeta_i$, $i=1,2$.
\end{lemma}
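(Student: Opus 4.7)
The plan is to set up the standard scheme for semilinear control problems: (a) establish well-posedness of the linearized equations \eqref{eq:uv} and \eqref{eq:uvv} in $\mathbb H^s(\Omega)\cap L^\infty(\Omega)$, (b) verify via Taylor expansion that the candidates defined by these linear problems are the Fréchet derivatives, and (c) show continuity by comparing linearizations at nearby points. The ambient $L^\infty$ bounds from Theorem~\ref{theo-bound} and Proposition~\ref{pro-dif-sol}, together with the local Lipschitz continuity of $f_u$ and $f_{uu}$ from Assumption~\ref{assum-diff}, will carry the whole argument.

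\textbf{Step 1 (linear theory).} Fix $z\in L^{\widetilde p}(\Omega)$ and set $u:=S(z)\in\mathbb H^s(\Omega)\cap L^\infty(\Omega)$. Writing $a(\cdot):=f_u(\cdot,u(\cdot))$, estimate \eqref{eq:fubd} gives $a\in L^\infty(\Omega)$, and the monotonicity of $f(x,\cdot)$ forces $a\ge 0$ a.e. Hence the bilinear form $(w,v)\mapsto\int_\Omega (L_D)^{s/2}w\,(L_D)^{s/2}v\,dx+\int_\Omega a\,w\,v\,dx$ is continuous and coercive on $\mathbb H^s(\Omega)$, and Lax--Milgram provides a unique weak solution $u_\zeta\in\mathbb H^s(\Omega)$ of \eqref{eq:uv} for every $\zeta\in \mathbb H^{-s}(\Omega)$. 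For $\zeta\in L^p(\Omega)$ with $p$ as in \eqref{cond-p}, repeating the Stampacchia truncation scheme that proves Theorem~\ref{theo-bound} (now with the \emph{linear} zeroth-order term $a\,u_\zeta\ge 0$ absorbed exactly as $f(x,u_k)v_k\ge 0$ was absorbed there) yields $\|u_\zeta\|_{L^\infty(\Omega)}+\|u_\zeta\|_{\mathbb H^s(\Omega)}\le C\|\zeta\|_{L^p(\Omega)}$. Since the right-hand side $-f_{uu}(\cdot,u)u_{\zeta_1}u_{\zeta_2}$ of \eqref{eq:uvv} is in $L^\infty(\Omega)$ by \eqref{eq:fuubd} and the $L^\infty$ bound on $u_{\zeta_i}$ just obtained, the same argument yields existence, uniqueness and the corresponding estimate for $u_{\zeta_1,\zeta_2}$.

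\textbf{Step 2 (first-order Fréchet differentiability).} For $\zeta\in L^{\widetilde p}(\Omega)$ set $\bar u:=S(z+\zeta)$ and $r:=\bar u-u-u_\zeta$. By Lemma~\ref{lem:S_Lip}, $\bar u-u\in L^\infty(\Omega)$ with $\|\bar u-u\|_{L^\infty(\Omega)}\le C\|\zeta\|_{L^{\widetilde p}(\Omega)}$, so $u$ and $\bar u$ both lie in a common $L^\infty$-ball of radius $M$. A second-order Taylor expansion, legal by Assumption~\ref{assum-diff}(i) with $k=2$, gives
\begin{equation*}
f(x,\bar u)-f(x,u)=f_u(x,u)(\bar u-u)+R(x), \quad R(x):=\int_0^1(1-t)\,f_{uu}\bigl(x,u+t(\bar u-u)\bigr)\,(\bar u-u)^2\,dt,
\end{equation*}
and Assumption~\ref{assum-diff}(ii) with \eqref{eq:fuubd} produces $\|R\|_{L^\infty(\Omega)}\le C\|\bar u-u\|_{L^\infty(\Omega)}^2\le C\|\zeta\|_{L^{\widetilde p}(\Omega)}^2$. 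Subtracting the three governing equations gives
\begin{equation*}
(L_D)^s r+f_u(\cdot,u)\,r=-R \quad\text{in }\Omega,
\end{equation*}
so Step~1 (applied to the linearized operator with data in $L^\infty\hookrightarrow L^p$) yields $\|r\|_{\mathbb H^s(\Omega)\cap L^\infty(\Omega)}\le C\|\zeta\|_{L^{\widetilde p}(\Omega)}^2$. This identifies the Fréchet derivative with the map $\zeta\mapsto u_\zeta$.

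\textbf{Step 3 (continuity of $S'$ and the iteration for $S''$).} If $z_n\to z$ in $L^{\widetilde p}(\Omega)$, Lemma~\ref{lem:S_Lip} gives $u_n:=S(z_n)\to u$ in $L^\infty(\Omega)$. For $\zeta\in L^{\widetilde p}(\Omega)$ with $\|\zeta\|_{L^{\widetilde p}(\Omega)}\le 1$, the difference $w_n:=S'(z_n)\zeta-S'(z)\zeta$ solves
\begin{equation*}
(L_D)^s w_n+f_u(\cdot,u_n)\,w_n=\bigl[f_u(\cdot,u)-f_u(\cdot,u_n)\bigr]S'(z)\zeta \quad\text{in }\Omega,
\end{equation*}
whose right-hand side is controlled, via Assumption~\ref{assum-diff}(ii) and the $L^\infty$ estimate for $S'(z)\zeta$ from Step~1, by $C\|u_n-u\|_{L^\infty(\Omega)}$, uniformly in $\zeta$. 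The uniform coercivity of $(L_D)^s+f_u(\cdot,u_n)$ (coercive already at the $(L_D)^s$ level, since $f_u\ge 0$) together with the $L^\infty$ estimate of Step~1 gives $\|w_n\|_{\mathbb H^s(\Omega)\cap L^\infty(\Omega)}\to 0$ uniformly in $\zeta$, proving $S'\in C(L^{\widetilde p}(\Omega),\mathcal L(L^{\widetilde p}(\Omega),\mathbb H^s(\Omega)\cap L^\infty(\Omega)))$. Repeating the argument of Step~2 with $f$ replaced by $f_u$ and using a Taylor expansion of $f_u$ to first order (legal by Assumption~\ref{assum-diff}(i) with $k=2$ and \eqref{eq:fuubd}) characterizes $S''(z)[\zeta_1,\zeta_2]$ as the solution $u_{\zeta_1,\zeta_2}$ of \eqref{eq:uvv}; Step~3 applied to this second linearization then delivers continuity of $S''$. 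The main obstacle is obtaining the quadratic bound on the remainder $R$ in the $L^\infty$ norm rather than merely in an $L^p$ norm: this hinges on the a priori $L^\infty$ bounds from Proposition~\ref{pro-dif-sol} together with Assumption~\ref{assum-diff}(ii), and is precisely what prevents the $2$-norm discrepancy from reappearing in the case $\widetilde p=2$ of Lemma~\ref{lem:nodisp}.
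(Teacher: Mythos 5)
Your proof is correct in substance, but it follows a genuinely different route from the paper. The paper proves Lemma~\ref{lem:S2d} by the implicit function theorem: it introduces the space $\cW=\{u\in\mathbb H^s(\Omega)\cap L^\infty(\Omega):(L_D)^su\in L^{\widetilde p}(\Omega)\}$, the $C^2$ map $\mathbb F(u,z)=(L_D)^su+f(\cdot,u)-z$ from $\cW\times L^{\widetilde p}(\Omega)$ to $L^{\widetilde p}(\Omega)$, and shows that $\mathbb F_u(u,z)=(L_D)^s+f_u(\cdot,u)$ is an isomorphism from $\cW$ onto $L^{\widetilde p}(\Omega)$ (essentially your Step~1, via Theorem~\ref{theo-bound} and $f_u\ge 0$); the implicit function theorem then delivers $S\in C^2$ at once, and \eqref{eq:uv}, \eqref{eq:uvv} follow by differentiating $\mathbb F(S(z),z)=0$. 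You instead build the derivatives by hand: Lax--Milgram plus the Stampacchia truncation for the linearized equations, a second-order Taylor expansion with an $L^\infty$ remainder bound of order $\|\zeta\|_{L^{\widetilde p}(\Omega)}^2$ to identify $S'$, and a comparison of linearizations at nearby points for continuity of $S'$ and the identification of $S''$. What the paper's route buys is brevity -- all remainder estimates are delegated to the abstract theorem, and no a~priori Lipschitz continuity of $S$ is needed as input; what your route buys is a self-contained, quantitative argument that makes explicit where the $L^\infty$ stability of $S$ and the local Lipschitz bounds on $f_u,f_{uu}$ enter, and why the case $\widetilde p=2$ of Lemma~\ref{lem:nodisp} avoids the $2$-norm discrepancy. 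Two small points: your appeal to Lemma~\ref{lem:S_Lip} formally presupposes the growth condition \eqref{growth}, which is not among the hypotheses of Lemma~\ref{lem:S2d}; you should instead invoke Proposition~\ref{pro-dif-sol} through Remark~3.10, since Assumption~\ref{assum-diff}(ii) supplies the local Lipschitz condition \eqref{Lip-cond} that the remark requires. Also, the final identification and continuity of $S''$ in your Step~3 is stated rather tersely (one still needs the expansion $f_u(\cdot,S(z+\zeta_2))-f_u(\cdot,S(z))=f_{uu}(\cdot,u)u_{\zeta_2}+o(\|\zeta_2\|_{L^{\widetilde p}(\Omega)})$ in $L^\infty(\Omega)$, uniformly over $\|\zeta_1\|_{L^{\widetilde p}(\Omega)}\le 1$), but this is a routine repetition of your Step~2 and not a gap.
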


\begin{proof}
The proof is based on the implicit function theorem. 
Let $\widetilde p$ be as in \eqref{eq:SL2Linf}.
We introduce the space 
\[
\cW = \Big\{u \in \mathbb H^s(\Omega)\cap L^\infty(\Omega), \;\; (L_D)^s u\in L^{\widetilde p}(\Omega) \Big\} 
\]
with norm 
\[
\|u\|_{\cW} = \|u\|_{\mathbb H^s(\Omega)\cap L^\infty(\Omega)} + \|(L_D)^su\|_{L^{\widetilde p}(\Omega)} .
\]
We next introduce the function
\[
\mathbb F : \cW \times L^{\widetilde{p}}(\Omega) \rightarrow L^{\widetilde p}(\Omega) , \quad \mathbb F(u,z) = (L_D)^s u + f(\cdot,u) - z ,
\]
with $(u,z) \in \cW \times L^{\widetilde{p}}(\Om)$.
Under Assumption~\ref{assum-diff}(i), $\mathbb F$ is of class $C^2$. Moreover, Assumptions \ref{assum3} and \ref{assum-diff}(i) imply that $f_u \ge 0$. Notice that since $u\in L^\infty(\Om)$, it follows from  \eqref{eq:fubd} that $f_u(\cdot,u)\in L^\infty(\Omega)$.
Then applying Theorem~\ref{theo-bound} 
to the operator 
 \[ 
  \mathbb F_u(u,z) = (L_D)^s + f_u(\cdot,u),
 \]
we deduce that 
$
 \mathbb F_u(u,z) 
$
is an isomorphism from $\cW$ to $L^{\widetilde p}(\Omega)$.  
Since $\mathbb F(u,z) = 0$ if and only if $u = S(z)$,
we can apply the implicit function theorem \cite[Theorem~2.7.2]{nirenberg1974topics} to deduce that $S$ is of class $C^2$ 
and fulfills 
$\mathbb F(S(z),z) = 0$. Therefore \eqref{eq:uv} follows easily. Moreover, \eqref{eq:uvv} follows after (in addition) using Assumption~\ref{assum-diff}(iv). The proof is finished.
\end{proof}

Throughout the remainder of the paper we restrict ourselves to the case where 
\begin{equation}\label{eq:J1J2}
 J_1(u) = \frac12\|u-u_d\|_{L^2(\Omega)}^2 \quad  \mbox{ and }\quad
 J_2(z) = \frac{\mu}{2} \|z\|^2_{L^2(\Omega)} . 
\end{equation}
The given function $u_d \in L^2(\Omega)$ and $\mu>0$ is the cost of the control. 
We further remark that these results can be directly extended to a more general
setting as described in the monograph \cite{MR2583281}.

Next, we introduce the adjoint state $\phi\in \mathbb H^s(\Omega)$ as the unique weak solution of
the adjoint equation
\begin{equation}\label{eq:adj}
(L_D)^s \phi+f_u(\cdot,u) \phi = u-u_d \quad \mbox{in } \Omega ,\\
\end{equation}
where $u \in L^\infty(\Omega)$ is given. Using Assumptions \ref{assum3} and \ref{assum-diff}(i) we have that $f_u(x,u(x)) \ge 0$ for a.e. $x\in\Omega$. 

\begin{lemma}[\bf Existence of solutions to the adjoint equation]
\label{adjoint:exist}
Let Assumptions \ref{assum3} and \ref{assum-diff}(i)--(iii) hold for $k=1$. 
Let $u\in L^\infty(\Omega)$ and $u_d \in L^2(\Omega)$ be given. Then there exists a unique $\phi \in\mathbb H^s(\Omega)$ 
weak solution to \eqref{eq:adj}. In addition, $\phi \in \mathbb{H}^{2s}(\Omega)$.
Finally, if $u_d$ in $L^p(\Om)$ where $p$ is as in \eqref{cond-p}, then $\phi \in L^\infty(\Om)$.
\end{lemma}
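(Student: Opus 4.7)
The equation is linear in $\phi$, so the plan is to apply Lax--Milgram on $\mathbb H^s(\Omega)$, then bootstrap using the equation itself for the higher regularity and $L^\infty$ claim. The key inputs are that the potential $V(x):=f_u(x,u(x))$ satisfies $V\ge 0$ a.e.\ in $\Omega$ (because Assumption~\ref{assum3} forces $f(x,\cdot)$ to be monotone and hence $f_u(x,\cdot)\ge 0$ wherever it exists) and $V\in L^\infty(\Omega)$ by \eqref{eq:fubd}, since $u\in L^\infty(\Omega)$.

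\emph{Step 1 (existence and uniqueness in $\mathbb H^s(\Omega)$).} Consider the bilinear form
\[
a(\phi,v):=\int_\Omega (L_D)^{\frac s2}\phi\,(L_D)^{\frac s2}v\,dx+\int_\Omega V(x)\phi v\,dx,\qquad \phi,v\in\mathbb H^s(\Omega).
\]
By \eqref{norm-2} and $V\in L^\infty(\Omega)$, $a$ is bounded on $\mathbb H^s(\Omega)\times\mathbb H^s(\Omega)$. Since $V\ge 0$, the first term alone gives coercivity: $a(\phi,\phi)\ge \|\phi\|_{\mathbb H^s(\Omega)}^2$. Because $u\in L^\infty(\Omega)\subset L^2(\Omega)$ and $u_d\in L^2(\Omega)$, the map $v\mapsto\int_\Omega(u-u_d)v\,dx$ is a continuous linear functional on $\mathbb H^s(\Omega)$ via the embedding $\mathbb H^s(\Omega)\hookrightarrow L^2(\Omega)$. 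The Lax--Milgram theorem yields a unique $\phi\in\mathbb H^s(\Omega)$ with $a(\phi,v)=\int_\Omega(u-u_d)v\,dx$ for all $v\in\mathbb H^s(\Omega)$, which is exactly the weak formulation of \eqref{eq:adj}.

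\emph{Step 2 ($\mathbb H^{2s}$-regularity).} Rewrite \eqref{eq:adj} as
\[
(L_D)^s\phi=(u-u_d)-V\phi.
\]
Since $u,u_d\in L^2(\Omega)$, $V\in L^\infty(\Omega)$, and $\phi\in\mathbb H^s(\Omega)\hookrightarrow L^2(\Omega)$, the right-hand side lies in $L^2(\Omega)$. By the spectral definition of $(L_D)^s$, the condition $(L_D)^s\phi\in L^2(\Omega)$ is precisely membership in $\mathbb H^{2s}(\Omega)$, so $\phi\in\mathbb H^{2s}(\Omega)$.

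\emph{Step 3 ($L^\infty$-bound when $u_d\in L^p(\Omega)$).} Assume now that $u_d\in L^p(\Omega)$ with $p$ as in \eqref{cond-p}. The plan is to reproduce the Moser-type truncation argument used in Theorem~\ref{theo-bound}. Testing the equation with $\phi_k:=(|\phi|-k)^+\mathrm{sgn}(\phi)\in\mathbb H^s(\Omega)$, the nonlocal principal term yields (as in Proposition~\ref{pro-dif-sol}) the coercive bound
\[
\int_\Omega |(L_D)^{\frac s2}\phi_k|^2\,dx\le \int_\Omega \bigl((L_D)^{\frac s2}\phi\bigr)\bigl((L_D)^{\frac s2}\phi_k\bigr)\,dx,
\]
while the zero-order term contributes $\int_\Omega V\phi\,\phi_k\,dx\ge 0$ (since $V\ge 0$ and $\phi\,\phi_k\ge 0$ pointwise) and so may be discarded on the left-hand side. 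This reduces the estimate to
\[
\|\phi_k\|_{\mathbb H^s(\Omega)}^2\le \int_\Omega (u-u_d)\phi_k\,dx\le \|u-u_d\|_{L^p(\Omega)}\|\phi_k\|_{L^{2^\star}(\Omega)}\|\chi_{A_k}\|_{L^{p_1}(\Omega)},
\]
where $A_k:=\{|\phi|\ge k\}$ and $p_1$ is chosen as in \eqref{eq-B}. The Sobolev embedding \eqref{sobo1} and Lemma~\ref{lem-01} then yield $\phi\in L^\infty(\Omega)$ with a bound depending linearly on $\|u-u_d\|_{L^p(\Omega)}$.

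\emph{Main obstacle.} The only nontrivial point is Step~3: one must verify that the truncation $\phi_k$ is admissible in $\mathbb H^s(\Omega)$ (via \cite[Lemma~2.7]{War} as in the proof of Proposition~\ref{pro-dif-sol}) and that the zero-order contribution $\int_\Omega V\phi\phi_k\,dx$ has the right sign. Both are straightforward because $V\ge 0$ and $\phi\phi_k\ge 0$ a.e. Otherwise the argument is a direct transcription of Theorem~\ref{theo-bound} to the linear equation with bounded nonnegative potential, so no new idea is needed.
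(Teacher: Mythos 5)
Your proof is correct and follows essentially the same route as the paper: coercivity of the linear bilinear form with the bounded nonnegative potential $f_u(\cdot,u)$ for existence and uniqueness, the spectral identity $(L_D)^s\phi=u-u_d-f_u(\cdot,u)\phi\in L^2(\Omega)$ for the $\mathbb H^{2s}(\Omega)$-regularity, and the Stampacchia-type truncation for the $L^\infty$-bound. The only cosmetic differences are that you invoke Lax--Milgram directly where the paper cites Proposition~\ref{prop-exis}, and you inline the truncation iteration where the paper simply cites Theorem~\ref{theo-bound}.
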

\begin{proof}
Since $f_u(\cdot,u) \in L^\infty(\Omega)$ and $f_u(\cdot,u) \ge 0$, the existence and uniqueness follow by using Assumption~\ref{assum-diff}(iii) and Proposition~\ref{prop-exis}. Since (notice that \eqref{eq:adj} is a linear equation in $\phi$)
\[
 \phi_k = \lambda_k^{-s} \int_\Omega (u-u_d - f_u(\cdot,u)\phi)\varphi_k\;dx,
\] 
then using the definition of the $\mathbb{H}^{2s}$-norm (see \eqref{norm-2}) we can deduce that 
\[
\|\phi\|_{\mathbb{H}^{2s}(\Omega)}  = \|u-u_d - f_u(\cdot,u)\phi\|_{L^2(\Omega)}.
\]
The $L^\infty(\Om)$-regularity of $\phi$ follows from 
Theorem~\ref{theo-bound}.
The proof is finished.
\end{proof}

\begin{lemma}[\bf $\mathcal{J}$ is twice Fr\'echet differentiable]
\label{lem:JFdiff}
Let the assumptions of Lemma~\ref{lem:S2d} and Lemma~\ref{adjoint:exist} hold. 
Then the functional $\mathcal{J}:L^{\widetilde{p}}(\Omega)\rightarrow\mathbb{R}$ is twice continuously 
Fr\'echet differentiable. Moreover for every $z,\zeta,\zeta_1,\zeta_2 \in L^{\widetilde{p}}(\Omega)$ 
there holds 
\[
 \mathcal{J}'(z)\zeta = \int_\Omega (\phi + \mu z)\zeta\;dx ,
\]
and 
\begin{align*}
 \mathcal{J}''(z)[\zeta_1,\zeta_2] 
 = \int_\Omega \Big(S'(z)\zeta_1 S'(z)\zeta_2  
  -\phi f_{uu}(x,S(z))S'(z)\zeta_1 S'(z)\zeta_2\Big)dx
  + \mu \int_\Omega \zeta_1 \zeta_2\;dx . 
\end{align*}
\end{lemma}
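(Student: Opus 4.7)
The plan is to combine the chain rule for Fréchet derivatives with the adjoint equation \eqref{eq:adj} to rewrite the derivatives in the claimed compact form. Write $\mathcal{J} = J_1 \circ S + J_2$ where the quadratic functionals $J_1,J_2:L^2(\Omega)\to\RR$ defined in \eqref{eq:J1J2} are of class $C^\infty$ with $J_1'(u)h = \int_\Omega (u-u_d)h\;dx$, $J_1''(u)[h_1,h_2] = \int_\Omega h_1 h_2\;dx$, $J_2'(z)\zeta = \mu\int_\Omega z\zeta\;dx$, and $J_2''(z)[\zeta_1,\zeta_2] = \mu\int_\Omega \zeta_1\zeta_2\;dx$. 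Since by Lemma~\ref{lem:S2d} the control-to-state map $S:L^{\widetilde p}(\Omega)\to \mathbb H^s(\Omega)\cap L^\infty(\Omega)\hookrightarrow L^2(\Omega)$ is of class $C^2$, the chain rule immediately yields that $\mathcal{J}:L^{\widetilde p}(\Omega)\to\RR$ is of class $C^2$ with
\begin{align*}
\mathcal{J}'(z)\zeta &= \int_\Omega (S(z)-u_d)\,S'(z)\zeta\;dx + \mu\int_\Omega z\zeta\;dx,\\
\mathcal{J}''(z)[\zeta_1,\zeta_2] &= \int_\Omega S'(z)\zeta_1\,S'(z)\zeta_2\;dx + \int_\Omega (S(z)-u_d)\,S''(z)[\zeta_1,\zeta_2]\;dx + \mu\int_\Omega \zeta_1\zeta_2\;dx.
\end{align*}

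Next I would eliminate the factors involving $S'(z)\zeta$ and $S''(z)[\zeta_1,\zeta_2]$ using the adjoint state $\phi\in \mathbb H^s(\Omega)$ from \eqref{eq:adj}. Setting $u:=S(z)$ and $u_\zeta:=S'(z)\zeta$, I would test the weak formulation of \eqref{eq:adj} with $u_\zeta\in\mathcal V_0$ and exploit the self-adjointness of $(L_D)^s$ (equivalently, the symmetry of the bilinear form in \eqref{int-rep}) to get
\begin{equation*}
\int_\Omega (u-u_d)u_\zeta\;dx = \int_\Omega (L_D)^{s/2}\phi\,(L_D)^{s/2}u_\zeta\;dx + \int_\Omega f_u(\cdot,u)\phi u_\zeta\;dx = \int_\Omega \phi\,\zeta\;dx,
\end{equation*}
where the last equality uses \eqref{eq:uv} with $u_\zeta$ as solution tested against $\phi\in \mathbb H^s(\Omega)$. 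Substituting gives the desired expression for $\mathcal{J}'(z)\zeta$. Repeating the same argument with $u_{\zeta_1,\zeta_2}:=S''(z)[\zeta_1,\zeta_2]\in \mathbb H^s(\Omega)\cap L^\infty(\Omega)$ in place of $u_\zeta$ and invoking \eqref{eq:uvv} yields
\begin{equation*}
\int_\Omega (u-u_d)u_{\zeta_1,\zeta_2}\;dx = -\int_\Omega \phi\,f_{uu}(\cdot,u)u_{\zeta_1}u_{\zeta_2}\;dx,
\end{equation*}
which produces the stated formula for $\mathcal{J}''(z)[\zeta_1,\zeta_2]$.

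The technical steps to verify are the integrability of the various products and the legitimacy of using $\phi$ and $u_\zeta$ (resp. $u_{\zeta_1,\zeta_2}$) as test functions in each other's weak formulations. The latter is immediate because both functions lie in $\mathbb H^s(\Omega)$ and the bilinear form associated with $(L_D)^s$ is continuous and symmetric on $\mathbb H^s(\Omega)\times \mathbb H^s(\Omega)$. For integrability, since $u,u_{\zeta_i}\in L^\infty(\Omega)$ by Lemma~\ref{lem:S2d}, the bounds \eqref{eq:fubd} and \eqref{eq:fuubd} give $f_u(\cdot,u), f_{uu}(\cdot,u)\in L^\infty(\Omega)$, while $\phi\in L^2(\Omega)$ (actually $\mathbb H^s(\Omega)$) from Lemma~\ref{adjoint:exist}; all integrals involved are therefore absolutely convergent. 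The main (but mild) obstacle is simply to keep track of the regularity needed to insert $u_{\zeta_1,\zeta_2}$ as a test function; this is handled by Assumption~\ref{assum-diff}(iv), which ensures that the right-hand side $-f_{uu}(\cdot,u)u_{\zeta_1}u_{\zeta_2}$ of \eqref{eq:uvv} lies in $L^\infty(\Omega)\subset L^{\widetilde p}(\Omega)$, so that Lemma~\ref{lem:S2d} applies.
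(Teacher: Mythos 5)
Your proposal is correct and follows essentially the same route as the paper: the chain rule applied to $\mathcal{J}=J_1\circ S+J_2$ together with the two identities obtained by testing the weak formulations of \eqref{eq:uv}, \eqref{eq:uvv} and \eqref{eq:adj} against each other (using the symmetry of the bilinear form). You merely spell out the duality/integrability details that the paper's proof leaves implicit.
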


\begin{proof}
The proof is based on the chain rule,  the results from Lemma~\ref{lem:S2d} together with the fact that
\[
 \int_\Omega (S(z)-u_d) S'(z) v \;dx= \int_\Omega \phi v \;dx
\]
and 
\[
 \int_\Omega (S(z)-u_d) S''(z)[\zeta_1,\zeta_2] \;dx
 = - \int_\Omega \phi f_{uu}(x,S(z))S'(z)\zeta_1 S'(z)\zeta_2\;dx,
\]
which can be deduced from the weak formulations of \eqref{eq:uv}, \eqref{eq:uvv} and 
\eqref{eq:adj}.
\end{proof}

Since $\mathcal{J}$ is non-convex, in general due to the semilinear state equation,
we cannot expect a unique solution to the optimal control problem. We introduce locally 
optimal solutions: 
$\bar{z} \in \Zad$ is locally optimal or local solution to \eqref{RPS} if there exists an 
$\varepsilon>0$ such that 

\[
 \mathcal{J}(\bar{z}) \le \mathcal{J}(z) \quad \forall z \in \Zad \cap B_\varepsilon(\bar z),
\]
where the $L^{\widetilde{p}}$-ball $B_\varepsilon(\bar z)$ centered at $\bar{z}$ with radius $\varepsilon$
is defined by 
\[
 B_{\varepsilon}(\bar z) 
  := \Big\{z \in L^{\widetilde{p}}(\Omega), \;\; \|z-\bar{z}\|_{L^{\widetilde{p}}(\Omega)} \le \varepsilon \Big\} . 
\]

\begin{theorem}[\bf First order necessary conditions]
\label{thm:fooc}
For every local solution $\bar{z}$ of \eqref{RPS}, there exist a unique
optimal state $\bar{u}=S(\bar z)$ and an optimal adjoint state $\bar{\phi}$ such that
\begin{equation}\label{FoC}
\int_{\Omega}(\bar{\phi}+\mu\bar{z})(z-\bar{z})\;dx\ge 0 \quad \forall z \in \Zad,
\end{equation}
which is equivalent to 
\begin{equation}\label{proj}
\bar{z}(x) = \Pi_{[z_a(x),z_b(x)]}\left(-\frac{1}{\mu}\bar{\phi}(x)\right) 
\quad \mbox{ for a.e. } x \in \Omega . 
\end{equation}
Here, 
$\Pi_{[z_a(x),z_b(x)]}(w(x)) = \min\Big\{z_b(x),\max\{z_a(x),w(x)\}\Big\}$.
\end{theorem}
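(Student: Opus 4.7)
The plan is to derive the variational inequality \eqref{FoC} from the standard first order necessary condition for a constrained minimum, and then deduce the pointwise projection formula \eqref{proj} by a standard convex analysis argument.

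First, I would fix a local solution $\bar z \in \Zad$ and set $\bar u = S(\bar z)$, and let $\bar\phi$ be the unique weak solution of the adjoint equation \eqref{eq:adj} associated with $\bar u$ (this solution exists by Lemma~\ref{adjoint:exist}, since $\bar u \in L^\infty(\Omega)$ and $u_d \in L^2(\Omega)$). Next, for any $z \in \Zad$ and any $t \in (0,1]$, convexity of $\Zad$ implies that $\bar z + t(z-\bar z) \in \Zad$. For $t$ sufficiently small one has $\bar z + t(z-\bar z) \in B_\varepsilon(\bar z)$ (in the $L^{\widetilde p}$-norm), so the local optimality of $\bar z$ yields
\begin{align*}
 0 \le \frac{\mathcal{J}(\bar z + t(z-\bar z)) - \mathcal{J}(\bar z)}{t}.
\end{align*}
Passing to the limit $t \downarrow 0$ and invoking the Fréchet differentiability of $\mathcal{J}$ established in Lemma~\ref{lem:JFdiff}, I obtain $\mathcal{J}'(\bar z)(z-\bar z) \ge 0$ for every $z \in \Zad$. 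Substituting the formula $\mathcal{J}'(\bar z)\zeta = \int_\Omega (\bar\phi + \mu \bar z)\zeta\,dx$ from Lemma~\ref{lem:JFdiff} gives exactly \eqref{FoC}.

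For the equivalence with the projection formula \eqref{proj}, I would use the fact that $\Zad$ consists of pointwise box constraints: \eqref{FoC} can be localized to
\begin{align*}
 (\bar\phi(x) + \mu \bar z(x))(w - \bar z(x)) \ge 0 \quad \text{for a.e. } x\in\Omega \text{ and all } w \in [z_a(x),z_b(x)],
\end{align*}
which is the classical variational characterization of the metric projection of $-\bar\phi(x)/\mu$ onto the interval $[z_a(x),z_b(x)]$ in the Euclidean metric (after dividing by $\mu > 0$ and completing the square). The localization itself is a standard measure-theoretic argument: if the pointwise inequality fails on a set of positive measure, one can construct an admissible test function $z \in \Zad$ that violates \eqref{FoC}.

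I do not expect any serious technical obstacle here; the only care needed is in the first step, namely verifying that $\bar z + t(z-\bar z) \in B_\varepsilon(\bar z)$ for small $t$. When $\widetilde p = 2$ this is immediate from $z, \bar z \in L^2(\Omega)$, while in the case $\widetilde p = +\infty$ it follows from $\Zad \subset L^\infty(\Omega)$ being bounded in the $L^\infty$-norm by $\max\{\|z_a\|_{L^\infty(\Omega)},\|z_b\|_{L^\infty(\Omega)}\}$, so that $\|t(z-\bar z)\|_{L^\infty(\Omega)} \le 2t\max\{\|z_a\|_{L^\infty(\Omega)},\|z_b\|_{L^\infty(\Omega)}\} \le \varepsilon$ for $t$ small enough. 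Uniqueness of $\bar u$ follows from Proposition~\ref{prop-exis}, and uniqueness of $\bar\phi$ from Lemma~\ref{adjoint:exist}.
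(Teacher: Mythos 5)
Your proposal is correct and is essentially the same argument the paper uses: the paper simply delegates both steps to the standard reference (\cite[Lemma~4.18 and p.~217]{MR2583281}), namely the directional-derivative argument $\mathcal{J}'(\bar z)(z-\bar z)\ge 0$ based on convexity of $\Zad$ and the differentiability of $\mathcal{J}$ from Lemma~\ref{lem:JFdiff}, followed by the pointwise localization and the variational characterization of the projection onto $[z_a(x),z_b(x)]$. You have merely written out in full what the paper cites, including the correct handling of the $L^{\widetilde p}$-ball for both $\widetilde p=2$ and $\widetilde p=+\infty$.
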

\begin{proof}
The proof of \eqref{FoC} is standard, see \cite[Lemma~4.18]{MR2583281}.  
For the equivalence between \eqref{FoC} and \eqref{proj} we refer to \cite[page~217]{MR2583281}. 
\end{proof}

 \begin{remark}\label{rem:strong}
  A necessary and sufficient condition for \eqref{FoC} to hold is that for 
  a.e. $x \in \Om$,
  \begin{equation}\label{eq:a}
  \begin{cases}
   \bar{z}(x) =
              z_a(x)\;\;& \mbox{ if }\; \bar{\phi}(x)+\mu\bar{z}(x) > 0, \\
            \bar{z}(x)   \in [z_a(x),z_b(x)] \; & \mbox{ if } \;\bar{\phi}(x)+\mu\bar{z}(x) = 0, \\
           \bar{z}(x) =   z_b(x)  \;& \mbox{ if } \;\bar{\phi}(x)+\mu\bar{z}(x) < 0 ,
             \end{cases}
  \end{equation}
  or equivalently the following pointwise inequality in $\R$:
  \[
   (\bar\phi(x) + \mu \bar{z}(x))(z - \bar{z}(x)) \ge 0 \quad \forall z \in [z_a(x),z_b(x)] 
    \quad \mbox{ and for a.e. } x \in \Om . 
  \]
  We refer to \cite[Lemma~{2.26}]{MR2583281} for a proof. We notice from \eqref{eq:a} 
  that $|\bar\phi(x) + \mu \bar{z}(x)| > 0$ implies $\bar{z} = z_a$ or $\bar{z} = z_b$. 
  The set of all $x \in \Om$ where $|\bar\phi(x) + \mu \bar{z}(x)| > 0$ will be called later,
  a strongly active set. 
 \end{remark}

\begin{remark}[\bf Nonsmooth cost functionals]
{\em  We let $J_1$ be as in \eqref{eq:J1J2}. 
The first order optimality conditions when $J_2(z) := \int_\Omega |\nabla z|$ are technical and 
are part of our future project (cf.~\cite{casas2017analysis} for the case of the classical Laplacian). 
On the other hand in case $J(u,z) = J_1(u) + J_2(z) + \nu\|z\|_{L^1(\Omega)}$ with $J_1, J_2$ as in \eqref{eq:J1J2} and constants $z_a, z_b$ fulfilling $z_a < 0 < z_b$, the first order optimality 
conditions are a modification of \eqref{proj} by using the characterization of the subdifferential of 
the $L^1(\Omega)$-norm (cf. \cite[Corollary~3.2]{ECasas_RHerzog_GWachsmuth_2012a} and 
\cite{FHClarke_1990a} for details). In particular, for a.e. $x\in\Omega$, we obtain the following:
\begin{enumerate}
\item $\bar{z}(x) = \Pi_{[z_a,z_b]}\left(-\frac{1}{\mu}(\bar{\phi}(x)+\nu\bar{\zeta}(x))\right)$;
\item $\bar{z}(x) = 0$ if and only if $|\bar{\phi}(x)|\le \nu$;
\item $\bar{\zeta}(x) = \Pi_{[-1,1]}\left(-\frac{1}{\nu}\bar{\phi}(x)\right)$. 
\end{enumerate}
}
\end{remark}

Before we state the second order necessary and sufficient conditions we 
first introduce the set of strongly active constraints (or strongly active set) $A_\tau(\bar{z})$ (see also
Remark~\ref{rem:strong}).
For $\tau \ge 0$ an arbitrary but fixed parameter, we let
 \[
  A_\tau(\bar{z}) := \{ x \in \Om \ : \ |\bar{\phi}(x)+\mu\bar{z}(x)| > \tau \}  .
 \]
We also define the $\tau$-critical cone (cf. \cite{MR1316261}) associated to a control $\bar{z}$ as
\begin{equation}\label{cone1}
 C_\tau(\bar{z}) := \Big\{v \in L^{\widetilde{p}}(\Omega), \;\: v \mbox{ fulfills } \eqref{cone2} \Big\} ,
\end{equation}
where for a.e. $x\in\Omega$,
 \begin{equation}\label{cone2}
 v(x) \left\{\begin{array}{ll}
                \ge 0, & \mbox{if }  \bar{z}(x) = z_a(x) , \\                   
                \le 0, & \mbox{if }  \bar{z}(x) = z_b(x) , \\                  
                0 ,    & \mbox{if } x \in A_\tau(\bar{z}) .
               \end{array}
        \right. 
\end{equation}


 \begin{proposition}[\bf Second order necessary conditions]
  Let $\bar{z}\in\Zad$ be a locally optimal control. Then $\mathcal{J}''(\bar{z})[z,z] \ge 0$
  for all $z \in C_0(\bar{z})$.
 \end{proposition}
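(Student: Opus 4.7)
The plan is to follow the classical Taylor-expansion argument along admissible directions, with a careful construction to ensure feasibility. Let $v \in C_0(\bar z)$ be fixed and let $\varepsilon>0$ be such that $\mathcal J(\bar z) \le \mathcal J(z)$ for all $z \in \Zad \cap B_\varepsilon(\bar z)$.

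First I would construct a sequence of admissible perturbations as follows. Pick $t_k \downarrow 0$ and define
\begin{equation*}
  v_k(x) := v(x)\, \chi_{E_k}(x), \qquad E_k := \Klg{x \in \Omega : \bar z(x)+t_k v(x) \in [z_a(x),z_b(x)]}.
\end{equation*}
Then $\bar z + t_k v_k \in \Zad$ by construction. I claim $v_k \to v$ pointwise a.e.\ on $\Omega$: on the set $\{z_a<\bar z<z_b\}$ this is clear for $k$ large enough; on $\{\bar z = z_a\}$ one has $v \ge 0$ (by the definition of $C_0(\bar z)$) and, since $v=0$ a.e.\ on $\{z_a=z_b\}\cap\{\bar z=z_a\}$, one checks $\bar z + t_k v \in [z_a,z_b]$ for large $k$; the case $\{\bar z=z_b\}$ is symmetric. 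Since $|v_k| \le |v| \in L^{\widetilde p}(\Omega)$, dominated convergence yields $v_k \to v$ in $L^{\widetilde p}(\Omega)$.

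Next, I would exploit local optimality: for $k$ large, $\|t_k v_k\|_{L^{\widetilde p}} \le t_k\|v\|_{L^{\widetilde p}} \le \varepsilon$, hence $\mathcal J(\bar z + t_k v_k) \ge \mathcal J(\bar z)$. Since $\mathcal J$ is $C^2$ on $L^{\widetilde p}(\Omega)$ by Lemma \ref{lem:JFdiff}, a second-order Taylor expansion with Lagrange remainder gives
\begin{equation*}
  0 \le t_k\,\mathcal J'(\bar z)v_k + \tfrac{t_k^2}{2}\,\mathcal J''(\bar z + \theta_k t_k v_k)[v_k,v_k]
\end{equation*}
for some $\theta_k \in (0,1)$. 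The key observation is that the first-order term vanishes identically: by definition of $C_0(\bar z)$, $v$ is supported in $\{\bar\phi+\mu\bar z = 0\}$, hence so is $v_k$, so
\begin{equation*}
  \mathcal J'(\bar z)v_k = \int_\Omega (\bar\phi + \mu\bar z)\, v_k\, dx = 0 .
\end{equation*}
Dividing by $t_k^2/2$ and using continuity of the bilinear form $\mathcal J''(\cdot)$ in its base point (Lemma \ref{lem:JFdiff}), together with $\bar z + \theta_k t_k v_k \to \bar z$ and $v_k \to v$ in $L^{\widetilde p}(\Omega)$, I would pass to the limit and obtain $\mathcal J''(\bar z)[v,v] \ge 0$.

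The main technical point is the construction of $v_k$: one needs simultaneously $\bar z + t_k v_k \in \Zad$, $v_k \to v$ in $L^{\widetilde p}(\Omega)$, and that $v_k$ still lies in the cone generated by the sign constraints so that the first-order term is exactly zero. The truncation $v_k = v\,\chi_{E_k}$ handles all three requirements because it preserves both the support of $v$ (hence the vanishing on the strongly active set) and its sign.
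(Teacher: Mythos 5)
Your overall strategy is the classical truncation argument (the paper itself gives no details and simply refers to the case $s=1$ in \cite[Theorem~4.27]{MR2583281}), and most of it is sound: the choice $v_k=v\chi_{E_k}$ does give $\bar z+t_kv_k\in\Zad$, the sign conditions of $C_0(\bar z)$ on the set $\{z_a=z_b\}$ do force $v=0$ there so the pointwise convergence $v_k\to v$ a.e.\ holds, and the first-order term $\mathcal J'(\bar z)v_k=\int_\Omega(\bar\phi+\mu\bar z)v_k\,dx$ vanishes exactly because $v$ (hence $v_k$) is zero on $A_0(\bar z)$. The step that fails is ``dominated convergence yields $v_k\to v$ in $L^{\widetilde p}(\Omega)$''. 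This is fine when $\widetilde p=2$, but in the generic case $\widetilde p=+\infty$ of \eqref{eq:SL2Linf} (i.e.\ when $N\ge 4s$) pointwise a.e.\ convergence with an $L^\infty$ dominating function gives no $L^\infty$ convergence: $\|v-v_k\|_{L^\infty(\Omega)}=\|v\|_{L^\infty(\Omega\setminus E_k)}$, which in general does not tend to zero. Since your passage to the limit in the quadratic term is justified precisely by joint continuity of $(z,\zeta)\mapsto\mathcal J''(z)[\zeta,\zeta]$ with $v_k\to v$ in $L^{\widetilde p}(\Omega)$, the proof as written is incomplete exactly in the situation where the 2-norm discrepancy is present.

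The gap is repairable with tools already in the paper, and this is the intended two-norm mechanism. One only needs $v_k\to v$ in $L^2(\Omega)$, which dominated convergence does give (as $\Omega$ is bounded and $|v_k|\le|v|\in L^{\widetilde p}(\Omega)\subset L^2(\Omega)$), together with two facts: (a) by Lemma~\ref{Lem:ass_ssc}, estimate \eqref{E-123}, one has $|\mathcal J''(\bar z+\theta_kt_kv_k)[v_k,v_k]-\mathcal J''(\bar z)[v_k,v_k]|\le L(M)\,t_k\|v\|_{L^{\widetilde p}(\Omega)}\|v_k\|^2_{L^2(\Omega)}\to 0$, because the base point does converge in $L^{\widetilde p}(\Omega)$ thanks to $\|\theta_kt_kv_k\|_{L^{\widetilde p}(\Omega)}\le t_k\|v\|_{L^{\widetilde p}(\Omega)}$; and (b) at the fixed base point $\bar z$, the expression of $\mathcal J''(\bar z)$ in Lemma~\ref{lem:JFdiff} extends to a bounded bilinear form on $L^2(\Omega)$, since $\bar\phi$ and $f_{uu}(\cdot,S(\bar z))$ belong to $L^\infty(\Omega)$ and $S'(\bar z)$ is bounded on $L^2(\Omega)$, so that $\mathcal J''(\bar z)[v_k,v_k]\to\mathcal J''(\bar z)[v,v]$. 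Substituting this two-step limit passage for your single ``continuity in $L^{\widetilde p}$'' argument closes the gap, and the resulting proof coincides with the classical one the paper cites.
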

 \begin{proof}
  The proof is identical to the classical case $s=1$ and is contained in 
  \cite[Theorem~4.27]{MR2583281}.
 \end{proof}

\begin{theorem}[\bf Quadratic growth condition]
\label{ssc}
Let $\bar{z}\in\Zad$ be a control satisfying the first order optimality conditions 
\eqref{FoC}. Assume that there exist two constants $\tau>0$ and $\delta>0$ such that
\begin{equation}\label{eq:ass_ssc}
 \mathcal{J}''(\bar{z})[z,z] \ge \delta \|z\|_{L^2(\Omega)}^2 
 \quad \forall z \in C_\tau(\bar z) .
\end{equation}
Then there are two constants $\varrho>0$ and $\beta>0$ such that 
\begin{equation}\label{eq:quad_grow}
 \mathcal{J}(z) \ge \mathcal{J}(\bar z) + \beta \|z-\bar z\|_{L^2(\Omega)}^2 
 \quad \forall z \in \Zad \cap B_{\varrho}(\bar z) . 
\end{equation}
\end{theorem}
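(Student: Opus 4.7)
The plan is to argue by contradiction, in the standard fashion for quadratic growth under strong second order conditions. Suppose the conclusion fails. Then there exists a sequence $\{z_k\}_{k\in\NN}\subset \Zad$ with $z_k\to\bar z$ in $L^{\widetilde p}(\Omega)$ and
\[
  \mathcal{J}(z_k) < \mathcal{J}(\bar z) + \tfrac{1}{k}\|z_k-\bar z\|_{L^2(\Omega)}^2 .
\]
Set $\rho_k:=\|z_k-\bar z\|_{L^2(\Omega)}>0$ and $v_k:=(z_k-\bar z)/\rho_k$, so that $\|v_k\|_{L^2(\Omega)}=1$. Since $L^2(\Omega)$ is reflexive, we may assume $v_k\rightharpoonup v$ in $L^2(\Omega)$ for some $v$. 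By Lemma~\ref{lem:JFdiff} a second order Taylor expansion of $\mathcal J$ at $\bar z$ along $z_k-\bar z$ yields an intermediate point $\hat z_k=\bar z+\theta_k(z_k-\bar z)\in\Zad$ with $\theta_k\in(0,1)$ and
\[
 \mathcal{J}'(\bar z)(z_k-\bar z) + \tfrac{1}{2}\mathcal{J}''(\hat z_k)[z_k-\bar z,z_k-\bar z] \le \tfrac{1}{k}\rho_k^2 .
\]

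Next, I would establish $v\in C_\tau(\bar z)$. The sign constraints are immediate: on $\{\bar z=z_a\}$ we have $v_k\ge 0$ and on $\{\bar z=z_b\}$ we have $v_k\le 0$, which pass to the weak $L^2$-limit. For the strongly active condition, note that $\mathcal{J}'(\bar z)(z_k-\bar z)\ge 0$ by the variational inequality \eqref{FoC}. Using that $\Zad$ bounds $\hat z_k$ uniformly in $L^\infty$, Lemma~\ref{lem:S_Lip} together with the $L^\infty$-bound on $\bar{\phi}_{\hat z_k}$ and on $f_{uu}(\cdot,S(\hat z_k))$ from \eqref{eq:fuubd} show $|\mathcal{J}''(\hat z_k)[v_k,v_k]|\le C$ uniformly. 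Dividing the Taylor inequality above by $\rho_k$ gives $\mathcal{J}'(\bar z)v_k\le C\rho_k+\rho_k/k\to 0$ while $\mathcal{J}'(\bar z)v_k\ge 0$, hence $\int_\Omega(\bar\phi+\mu\bar z)v_k\,dx\to 0$. Since $\bar\phi+\mu\bar z\in L^\infty(\Omega)$, the weak convergence transfers this limit to $v$: $\int_\Omega(\bar\phi+\mu\bar z)v\,dx=0$. Combined with the already established sign constraints on $v$ and the characterization \eqref{eq:a} of $\bar z$, the integrand $(\bar\phi+\mu\bar z)v$ is pointwise nonnegative, hence equals zero a.e.; on $A_\tau(\bar z)$ where $|\bar\phi+\mu\bar z|>\tau$ this forces $v=0$. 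Therefore $v\in C_\tau(\bar z)$.

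Now I would exploit the second order hypothesis. Divide the Taylor inequality by $\rho_k^2$ and drop the nonnegative first order term to obtain $\mathcal{J}''(\hat z_k)[v_k,v_k]\le 2/k$. Writing
\[
 \mathcal{J}''(\hat z_k)[v_k,v_k]=\|S'(\hat z_k)v_k\|_{L^2(\Omega)}^2-\int_\Omega \bar\phi_{\hat z_k} f_{uu}(\cdot,S(\hat z_k))(S'(\hat z_k)v_k)^2\,dx+\mu\|v_k\|_{L^2(\Omega)}^2 ,
\]
I would use that $S'(\bar z):L^2(\Omega)\to L^2(\Omega)$ is compact (its range lies in $\mathbb{H}^s(\Omega)\hookrightarrow\hookrightarrow L^2(\Omega)$ by Lemma~\ref{lem:S2d} applied with $\widetilde p=2$; for $\widetilde p=\infty$ the argument is identical after first establishing, via the linearized equation, that $S'(\bar z)$ extends to a compact operator $L^2(\Omega)\to L^2(\Omega)$). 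Together with $\hat z_k\to\bar z$, a perturbation argument for the linearized equation gives $S'(\hat z_k)v_k\to S'(\bar z)v$ in $L^2(\Omega)$, while $\bar\phi_{\hat z_k}\to\bar\phi$ in $L^\infty(\Omega)$ and $f_{uu}(\cdot,S(\hat z_k))\to f_{uu}(\cdot,\bar u)$ in $L^\infty(\Omega)$ by Lemma~\ref{adjoint:exist} and Assumption~\ref{assum-diff}. Passing to the limit in the three terms (the last one being constantly $\mu$) yields
\[
 \mathcal{J}''(\bar z)[v,v]+\mu(1-\|v\|_{L^2(\Omega)}^2)\le 0 .
\]
Since $v\in C_\tau(\bar z)$, the SSC \eqref{eq:ass_ssc} gives $\mathcal{J}''(\bar z)[v,v]\ge\delta\|v\|_{L^2(\Omega)}^2$, whence $\delta\|v\|_{L^2(\Omega)}^2+\mu(1-\|v\|_{L^2(\Omega)}^2)\le 0$. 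As $\delta,\mu>0$, this forces $\|v\|_{L^2(\Omega)}^2\ge 1$ as well as $\delta\|v\|_{L^2(\Omega)}^2\le 0$, i.e. $v=0$, a contradiction.

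The main obstacle is the 2-norm discrepancy: the contradiction argument is driven by the $L^2$-norm while $S$ and $\mathcal J$ are naturally controlled on $L^{\widetilde p}$; pushing the weak $L^2$-limit of $v_k$ through the quadratic form $\mathcal{J}''(\hat z_k)$ requires both the compactness of $S'$ from $L^2$ into $L^2$ and uniform $L^\infty$-bounds on $\bar\phi$ and $f_{uu}(\cdot,\bar u)$, neither of which is entirely automatic from the earlier theory and both of which rely crucially on the regularity results of Theorem~\ref{theo-bound}, Lemma~\ref{adjoint:exist} and the bound \eqref{eq:fuubd}. When Lemma~\ref{lem:nodisp} applies, i.e.\ $\widetilde p=2$, these ingredients can be assembled transparently; otherwise one must separately verify that $S'(\bar z)$, initially defined on $L^\infty(\Omega)$, extends by density to a compact operator on $L^2(\Omega)$ via its weak formulation.
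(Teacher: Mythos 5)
Your proposal is correct in substance, but it takes a genuinely different route from the paper. You argue by contradiction with a normalized sequence $v_k=(z_k-\bar z)/\|z_k-\bar z\|_{L^2(\Omega)}$, identify the weak limit $v$ as an element of the critical cone via the first-order conditions and the sign structure of Remark~\ref{rem:strong}, and pass to the limit in $\mathcal J''(\hat z_k)[v_k,v_k]$ using the compact embedding $\mathbb H^s(\Omega)\hookrightarrow L^2(\Omega)$, the Lipschitz stability of $S$, of the adjoint states and of $f_{uu}$; this is essentially the mechanism inside the abstract results of \cite{MR2902693}, and it treats $\widetilde{p}=2$ and $\widetilde{p}=+\infty$ in a unified way. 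The paper instead argues directly: Taylor expansion at $\bar z$, the perturbation estimate of Lemma~\ref{Lem:ass_ssc} to control the variation of $\mathcal J''$ (this is where the 2-norm discrepancy is absorbed), a splitting $h=h_0+h_1$ relative to the strongly active set $A_\tau$ combined with the first-order conditions and Young's inequality, and, for $\widetilde{p}=2$, a direct appeal to \cite[Theorems~2.3 and 2.7]{MR2902693}. The paper's route yields explicit constants ($\beta=\tfrac12\min\{\tau/2,\delta/4\}$ and an explicit $\varrho$), while yours avoids the splitting and Lemma~\ref{Lem:ass_ssc} at the price of being non-constructive and compactness-based. Two points to tighten: at the very end, the contradiction from $\delta\|v\|_{L^2(\Omega)}^2+\mu(1-\|v\|_{L^2(\Omega)}^2)\le 0$ needs the weak lower semicontinuity of the $L^2$-norm, giving $\|v\|_{L^2(\Omega)}\le 1$, hence $\delta\|v\|_{L^2(\Omega)}^2\le 0$, so $v=0$ and the inequality reads $\mu\le 0$, which is absurd (as written, your two simultaneous conclusions do not both follow without this); and the uniform $L^\infty$-bounds on the adjoint states $\phi_{\hat z_k}$ require $u_d\in L^p(\Omega)$ as in Lemma~\ref{adjoint:exist}, the same implicit assumption the paper uses in Lemma~\ref{Lem:ass_ssc}.
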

Notice that \eqref{eq:ass_ssc} can be easily checked in several cases, for instance, 
when the state equation is linear and, as a result the underlying optimal control problem is strictly convex. Nevertheless, in certain cases of non-convex optimal control problems it is also possible to prove \eqref{eq:ass_ssc}, see 
for instance \cite{MR3256797}. Before proving Theorem~\ref{ssc} we need the following auxiliary result which shall help us to deal with the 2-norm discrepancy. Notice that the 2-norm discrepancy 
only occurs when $\widetilde{p} = +\infty$.

\begin{lemma}
\label{Lem:ass_ssc}
Assume that Assumption~\ref{assum-diff} holds and let $\cJ:L^{\widetilde{p}}(\Omega)\rightarrow\RR$. Then for each $M>0$ there is a constant $L(M)>0$ such that
\begin{equation}\label{E-123}
 |\cJ''(z+h)[z_1,z_2]-\cJ''(z)[z_1,z_2]| 
\le L(M) \|h\|_{L^{\widetilde{p}}(\Omega)} \|z_1\|_{L^2(\Omega)} \|z_2\|_{L^2(\Omega)} ,
\end{equation}
for all $z,h,z_1,z_2 \in L^{\widetilde{p}}(\Omega)$ satisfying $\max\{\|z\|_{L^{\widetilde{p}}(\Omega)},\|h\|_{L^{\widetilde{p}}(\Omega)}\} \le M$. 
\end{lemma}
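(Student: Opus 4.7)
The plan is to use the explicit representation of $\cJ''$ from Lemma~\ref{lem:JFdiff} and telescope the difference. Writing $u := S(z)$, $\tilde u := S(z+h)$, $u_i := S'(z)\zeta_i$, $\tilde u_i := S'(z+h)\zeta_i$, and $\phi,\tilde\phi$ for the associated adjoint states solving \eqref{eq:adj}, the contribution $\mu\int_\Omega \zeta_1\zeta_2\,dx$ is independent of $z$ and cancels, so one is left with
\begin{align*}
\cJ''(z+h)[\zeta_1,\zeta_2]-\cJ''(z)[\zeta_1,\zeta_2]
&= \int_\Omega(\tilde u_1\tilde u_2-u_1u_2)\,dx \\
&\quad - \int_\Omega\bigl[\tilde\phi f_{uu}(\cdot,\tilde u)\tilde u_1\tilde u_2-\phi f_{uu}(\cdot,u)u_1u_2\bigr]\,dx.
\end{align*}
The first integral telescopes as $(\tilde u_1-u_1)\tilde u_2 + u_1(\tilde u_2-u_2)$, and the second as
\begin{align*}
(\tilde\phi-\phi)f_{uu}(\tilde u)\tilde u_1\tilde u_2 + \phi\bigl[f_{uu}(\tilde u)-f_{uu}(u)\bigr]\tilde u_1\tilde u_2 + \phi f_{uu}(u)(\tilde u_1-u_1)\tilde u_2 + \phi f_{uu}(u)u_1(\tilde u_2-u_2),
\end{align*}
so the task reduces to bounding six products each of which contains exactly one ``difference'' factor.

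The auxiliary Lipschitz estimates I would then establish are the following. First, Assumption~\ref{assum-diff}(ii) supplies the local Lipschitz condition \eqref{Lip-cond}, so Lemma~\ref{lem:S_Lip} together with Remark~\ref{rem:pvstp} yields $\|\tilde u-u\|_{L^\infty(\Omega)}+\|\tilde u-u\|_{\mathbb H^s(\Omega)}\le C(M)\|h\|_{L^{\widetilde p}(\Omega)}$. Second, testing the linearized equation \eqref{eq:uv} against $u_i$ and using $f_u(\cdot,u)\ge 0$ together with the bound $\|f_u(\cdot,u)\|_{L^\infty}\le C(M)$ from \eqref{eq:fubd} gives $\|u_i\|_{\mathbb H^s(\Omega)}+\|u_i\|_{L^2(\Omega)}\le C(M)\|\zeta_i\|_{L^2(\Omega)}$, and the same bound holds for $\tilde u_i$. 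Third, subtracting the equations satisfied by $\tilde u_i$ and $u_i$ produces
\begin{align*}
(L_D)^s(\tilde u_i-u_i)+f_u(\cdot,\tilde u)(\tilde u_i-u_i) = \bigl[f_u(\cdot,u)-f_u(\cdot,\tilde u)\bigr]u_i,
\end{align*}
whose right-hand side is controlled in $L^2$ by $L_M\|\tilde u-u\|_{L^\infty}\|u_i\|_{L^2}\le C(M)\|h\|_{L^{\widetilde p}}\|\zeta_i\|_{L^2}$, so the energy estimate yields $\|\tilde u_i-u_i\|_{L^2(\Omega)}\le C(M)\|h\|_{L^{\widetilde p}}\|\zeta_i\|_{L^2}$. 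The same arguments applied to the linear adjoint equation \eqref{eq:adj} and its difference, combined with the $L^\infty$-theory of Theorem~\ref{theo-bound} (using that $u\in L^\infty$ and assuming $u_d\in L^p(\Omega)$ with $p$ as in \eqref{cond-p}), deliver $\|\phi\|_{L^\infty(\Omega)}\le C(M)$ and $\|\tilde\phi-\phi\|_{L^\infty(\Omega)}\le C(M)\|h\|_{L^{\widetilde p}}$.

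With these ingredients in hand, each of the six telescoping pieces is controlled by H\"older's inequality, placing the adjoint states and the $f_{uu}$ factors in $L^\infty$ and the states $u_i,\tilde u_i$ and their differences in $L^2$; for instance
\begin{align*}
\left|\int_\Omega(\tilde\phi-\phi)f_{uu}(\tilde u)\tilde u_1\tilde u_2\,dx\right| \le \|\tilde\phi-\phi\|_{L^\infty}\|f_{uu}(\tilde u)\|_{L^\infty}\|\tilde u_1\|_{L^2}\|\tilde u_2\|_{L^2}\le C(M)\|h\|_{L^{\widetilde p}}\|\zeta_1\|_{L^2}\|\zeta_2\|_{L^2},
\end{align*}
where $\|f_{uu}(\tilde u)\|_{L^\infty}\le C(M)$ follows from \eqref{eq:fuubd} and the uniform $L^\infty$-bound on $\tilde u$. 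Summing the six contributions yields \eqref{E-123}. I expect the main technical hurdle to be the $L^\infty$-regularity of the adjoint states $\phi$ and $\tilde\phi-\phi$: Theorem~\ref{theo-bound} applies to the linear adjoint equation only when the right-hand side satisfies the integrability condition \eqref{cond-p}, which forces an implicit regularity assumption on $u_d$ beyond the $L^2$-regularity granted by \eqref{eq:J1J2}. If such a bound were unavailable one would have to replace these $L^\infty$-bounds by Sobolev-embedding estimates on $\phi\in\mathbb H^s\hookrightarrow L^{2^\star}$ and impose a matching higher integrability on the $\zeta_i$, which in general is incompatible with having only $L^2$-norms on the right-hand side of \eqref{E-123}.
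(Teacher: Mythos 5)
Your proof is correct and follows essentially the same route as the paper: the representation of $\cJ''$ from Lemma~\ref{lem:JFdiff}, a telescoping decomposition of the difference, $L^2$-energy estimates for the linearized state equations and their difference, $L^\infty$-estimates for the adjoint states and their difference via Theorem~\ref{theo-bound}, and H\"older's inequality on each resulting term. Your closing observation about the adjoint $L^\infty$-bounds is also consistent with the paper, whose proof invokes the hypotheses of Lemma~\ref{adjoint:exist} (hence $u_d\in L^p(\Omega)$ with $p$ as in \eqref{cond-p} when $\widetilde{p}=+\infty$, while $u_d\in L^2(\Omega)$ suffices when $\widetilde{p}=2$).
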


\begin{proof}
We begin by setting $u = S(z)$, $u_h = S(z+h)$ with the corresponding adjoint state $\phi$ and $\phi_h$, respectively. Moreover, let $u_i = S'(z)z_i$ and $u_{i,h} = S'(z+h)z_i$ for $i = 1,2$. Using Lemma~\ref{lem:JFdiff} we have 
\begin{align*}
&\cJ''(z+h)[z_1,z_2] - \cJ''(z)[z_1,z_2] \\
& \ = \int_\Omega \left(u_{1,h} u_{2,h} - u_1 u_2 \right)\;dx 
-\int_\Omega\phi_h f_{uu}(x,u_h) u_{1,h} u_{2,h} \;dx + \int_\Omega \phi f_{uu}(x,u) u_1 u_2 \;dx\\
& \ =\int_\Omega \left(u_{1,h} u_{2,h} - u_1 u_2 \right)\;dx 
-\int_\Omega\phi_h \left(f_{uu}(x,u_h) u_{1,h} u_{2,h} - f_{uu}(x,u) u_1 u_2\right)\;dx \\
& \quad + \int_\Omega (\phi-\phi_h) f_{uu}(x,u) u_1 u_2 \;dx.
\end{align*}
Therefore 

\begin{align*}
&\left| \cJ''(z+h)[z_1,z_2] - \cJ''(z)[z_1,z_2] \right| \\
&\ \le \|u_{1,h} u_{2,h} - u_1 u_2 \|_{L^1(\Omega)} 
 + \|\phi-\phi_h\|_{L^\infty(\Omega)} \|f_{uu}(\cdot,u) u_1 u_2\|_{L^1(\Omega)}  \\
&\quad+ \|\phi_h\|_{L^\infty(\Omega)} \|f_{uu}(\cdot,u_h) u_{1,h} u_{2,h} - f_{uu}(\cdot,u) u_1 u_2 \|_{L^1(\Omega)} 
 = (I) +( II) + (III) .
\end{align*}
We shall estimate $(I)$, $(II)$ and $(III)$ separately. We have
 \begin{align}\label{E-I}
 ( I) \le \|u_{1,h}\|_{L^2(\Om)}\|u_{2,h}-u_2\|_{L^2(\Om)} + \|u_{1,h}-u_1\|_{L^2(\Om)}\|u_2\|_{L^2(\Om)} .
 \end{align}
From \eqref{eq:uv} we recall that $u_i$ and $u_{i,h}$ solve the linear equations
 \begin{equation}\label{lem:59_1}
  (L_D)^s u_{i} + f_u(\cdot,u)u_{i} 
   = z_i \quad \mbox{in } \Omega ,\quad \quad 
  (L_D)^s u_{i,h} + f_u(\cdot,u_h)u_{i,h} 
   = z_i \quad \mbox{in } \Omega.
 \end{equation}
Using the properties of these linear equations and the fact that $f_u \ge 0$, we get that 
$\|u_{i}\|_{L^2(\Om)} \le C \|z_i\|_{L^2(\Om)}$ and 
$\|u_{i,h}\|_{L^2(\Om)} \le C \|z_i\|_{L^2(\Om)}$ for some constant $C>0$. It then remains to estimate $\|u_{i,h}-u_i\|_{L^2(\Om)}$ for $i=1,2$. Subtracting the two equations in \eqref{lem:59_1} and rearranging the terms we 
obtain that 
 \begin{equation}\label{lem:59_2}
  (L_D)^s (u_{i,h}-u_i) + f_u(\cdot,u)(u_{i,h}-u_i)
   = -\left(f_u(\cdot,u_h) - f_u(\cdot,u) \right)u_{i,h} \quad \mbox{in } \Omega.
 \end{equation}
We can estimate the $L^2$-norm of the right hand side as 
 \[
  \|\left(f_u(\cdot,u_h) - f_u(\cdot,u) \right)u_{i,h}\|_{L^2(\Om)} 
  \le \|f_u(\cdot,u_h) - f_u(\cdot,u)\|_{L^\infty(\Om)} \|u_{i,h}\|_{L^2(\Om)}
  \le \|h\|_{L^{\widetilde{p}}(\Om)} \|z_i\|_{L^2(\Om)},
 \] 
where in the last step we have used the Lipschitz continuity of $f_u$ (Assumption~\ref{assum-diff}) and $S$ (cf. Lemma~\ref{lem:S_Lip} and Remark~\ref{rem:pvstp}),  and the aforementioned estimate for 
$\|u_{i,h}\|_{L^2(\Om)}$. Combining
this estimate with the well-posedness of \eqref{lem:59_2} (cf.~\eqref{eq:uv}) we obtain
(for $i=1,2$) that there is a constant $C>0$ such that
 \[
  \|u_{i,h}-u_i\|_{L^2(\Om)} \le C \|h\|_{L^{\widetilde{p}}(\Om)} \|z_i\|_{L^2(\Om)} . 
 \]
Substituting this in the above estimate \eqref{E-I} of $(I)$, we get that 
 \begin{equation}\label{E-I-1}
 ( I) \le C\|z_1\|_{L^2(\Om)} \|h\|_{L^{\widetilde{p}}(\Om)} \|z_2\|_{L^2(\Om)} . 
 \end{equation} 
Next we estimate $(II)$. Subtracting the adjoint equations (cf.~\eqref{eq:adj}) for $\phi$ and $\phi_h$ we
obtain that 
 \[
  L^s_D (\phi_h-\phi) + f_u(x,u)(\phi_h-\phi) 
   = u_h - u + \left( f_u(x,u) - f_u(x,u_h) \right)\phi_h .
 \]
From the well-posedness of this equation and the $L^\infty$-estimate (under the assumptions of Lemma~\ref{adjoint:exist}),
we get that there is a constant $C>0$ such that 
 \[
  \|\phi_h-\phi\|_{L^\infty(\Om)} 
   \le \|u_h-u\|_{L^\infty(\Om)} + \| f_u(\cdot,u) - f_u(\cdot,u_h) \|_{L^\infty(\Om)} 
           \|\phi_h\|_{L^\infty(\Om)} 
   \le C \|h\|_{L^{\widetilde{p}}(\Om)}   , 
 \]
where in the last step we have used the following: (a) the Lipschitz continuity of the control to state map
\eqref{Lip} (cf.~Remark~\ref{rem:pvstp}), i.e., $\|u_h-u\|_{L^\infty(\Om)} \le C\|h\|_{L^{\widetilde{p}}(\Om)}$ which further implies
that $\|u_h\|_{L^\infty(\Omega)}$ is uniformly bounded (cf. Lemma~\ref{lem:nodisp}); (b) 
the uniform boundedness of  $\phi_h$ which is due to the adjoint equation for $\phi_h$ (since $u_h$ is uniformly bounded); (c) the Lipschitz continuity of $f_u(x,\cdot)$. To complete the estimate for $(II)$, we need to estimate 
$\|f_{uu}(\cdot,u)u_1u_2\|_{L^1(\Om)}$. We notice that 
 \[
  \|f_{uu}(\cdot,u)u_1u_2\|_{L^1(\Om)} \le C \|f_{uu}(\cdot,u)\|_{L^\infty(\Om)} 
     \|u_1\|_{L^2(\Om)} \|u_2\|_{L^2(\Om)}
   \le C \|z_1\|_{L^2(\Om)} \|z_2\|_{L^2(\Om)} , 
 \]
where we have used the assumption that $f_{uu}$ is bounded when $u$ is bounded (cf.~\eqref{eq:fuubd} and Assumption~\ref{assum-diff}(i),(ii), and (iv)).  
As a result, we get that
 \begin{equation}\label{E-II}
 ( II) \le C \|h\|_{L^{\widetilde{p}}(\Om)}\|z_1\|_{L^2(\Om)} \|z_2\|_{L^2(\Om)} . 
 \end{equation}
Next we estimate $(III)$. We first recall that $\|\phi_h\|_{L^\infty(\Om)}$ is uniformly 
bounded with respect to $h$. Furthermore
 \begin{align*}
  &\|f_{uu}(\cdot,u_h) u_{1,h} u_{2,h} - f_{uu}(\cdot,u) u_1 u_2 \|_{L^1(\Omega)} \\
  &\quad\le \| \left( f_{uu}(\cdot,u_h) - f_{uu}(\cdot,u) \right) u_{1,h} u_{2,h} \|_{L^1(\Om)}
        + \|f_{uu}(\cdot,u) \left( u_{1,h} u_{2,h} - u_1 u_2 \right) \|_{L^1(\Om)} \\
  &\quad\le \|f_{uu}(\cdot,u_h) - f_{uu}(\cdot,u)\|_{L^\infty(\Om)} \|u_{1,h}\|_{L^2(\Om)} \|u_{2,h}\|_{L^2(\Om)}
   + \|f_{uu}(\cdot,u)\|_{L^\infty(\Om)}\|u_{1,h} u_{2,h} - u_1 u_2 \|_{L^1(\Om)} \\
  &\quad \le C \|h\|_{L^{\widetilde{p}}(\Om)} \|z_1\|_{L^2(\Om)} \|z_2\|_{L^2(\Om)}  ,
 \end{align*}
where in the last step we have used the Lipschitz continuity of $f_{uu}$ (Assumption~\ref{assum-diff}) (ii)) and the aforementioned estimates for
$\|u_{i,h}\|_{L^2(\Om)}$, $i=1,2$, 
the aforementioned boundedness of $f_{uu}(\cdot,u)$ and the estimate for 
$\|u_{1,h} u_{2,h} - u_1 u_2 \|_{L^1(\Om)}$ as in the case of $(I)$. We have shown that
\begin{equation}\label{E-III}
(III)\le C \|h\|_{L^{\widetilde{p}}(\Om)}\|z_1\|_{L^2(\Om)} \|z_2\|_{L^2(\Om)} .
\end{equation}
Finally \eqref{E-123} follows from \eqref{E-I-1}, \eqref{E-II} and \eqref{E-III}. The proof is finished.
\end{proof}

Now we are ready to give the proof of Theorem~\ref{ssc} which is inspired from \cite[Chapter~5]{MR2583281}. We emphasize that the proof can also be deduced from the general theory given in \cite{MR2902693}. In fact we shall apply the abstract result from \cite{MR2902693} for the case $\widetilde{p} = 2$.

\begin{proof}[\bf Proof of Theorem~\ref{ssc}]
Let $z \in Z_{ad}$ with $\|z-\bar{z}\|_{L^{\widetilde{p}}(\Om)} \le \varrho$. We set 
$h:= z - \bar{z}$. Applying Taylor's theorem we have that 
 \begin{align*}
  \cJ(z) &= \cJ(\bar{z}) + \cJ'(\bar{z})h  
      + \frac12 \cJ''(\bar{z}+\theta h)h^2  \\
         &= \cJ(\bar{z}) + \cJ'(\bar{z})h + \frac12 \cJ''(\bar{z})h^2
      + \frac12 \left( \cJ''(\bar{z}+\theta h) - \cJ''(\bar{z}) \right) h^2 
 \end{align*}
with $\theta = \theta(x) \in (0,1)$. Using Lemma~\ref{Lem:ass_ssc} we have that there is a constant $L>0$ such that
 \begin{align*}
  \cJ(z) &\ge \cJ(\bar{z}) + \cJ'(\bar{z})h + \frac12 \cJ''(\bar{z})h^2
          - L \|h\|_{L^{\widetilde{p}}(\Om)} \|h\|^2_{L^2(\Om)} . 
 \end{align*}
Since $\cJ'(\bar{z})h = \int_\Om (\mu \bar{z} + \bar{\phi}) h \;dx$ and also
$\|h\|_{L^{\widetilde{p}}(\Om)} \le \varrho$, we get that
 \begin{align*}
  \cJ(z) &\ge \cJ(\bar{z}) + \int_{A_\tau} (\mu \bar{z} + \bar{\phi}) h \;dx
  + \int_{\Om\setminus A_\tau} (\mu \bar{z} + \bar{\phi}) h \;dx
  + \frac12 \cJ''(\bar{z})h^2
          - \varrho L \|h\|^2_{L^2(\Om)} \\
         &\geq \cJ(\bar{z}) + \int_{A_\tau} (\mu \bar{z} + \bar{\phi}) h \;dx
  + \frac12 \cJ''(\bar{z})h^2
          - \varrho L \|h\|^2_{L^2(\Om)} ,
 \end{align*}
where we have used Remark~\ref{rem:strong}.
From the first-order necessary conditions we have that $\cJ'(\bar{z})h \ge 0$,
and as a result 
 \[
  \cJ(z) \ge \cJ(\bar{z}) + \tau \int_{A_\tau} |h(x)|\;dx + 
   \frac12 \cJ''(\bar{z})h^2  
   - \varrho L  \|h\|^2_{L^2(\Om)} .
 \]
We next split $h$ into two parts, $h = h_0+h_1$ as 
 \[
  h_0(x) := \left\{
             \begin{array}{ll} 
              h(x) & \mbox{if } x \not\in A_\tau \\
              0    & \mbox{if } x \in A_\tau .
             \end{array}
            \right.
 \]
Notice that $h_0$ fulfills the sign conditions in $C_\tau(\bar{z})$. Since
$h_0 = 0$ on $A_\tau$, then $h_0 \in C_\tau(\bar{z})$. Thus
 \begin{equation}\label{eq:sop}
  \cJ(z) \ge \cJ(\bar{z}) + \tau \int_{A_\tau} |h(x)|\;dx 
   + \frac12 \cJ''(\bar{z})(h_0+h_1)^2  
      - \varrho L  \|h\|^2_{L^2(\Om)} . 
 \end{equation}
Next we estimate $\frac12 \cJ''(\bar{z})(h_0+h_1)^2$. Since $h_0 \in C_\tau(\bar{z})$, then
from \eqref{eq:ass_ssc} we have that 
 \[
  \frac12 \cJ''(\bar{z}) h_0^2 \ge \frac{\delta}{2} \|h_0\|^2_{L^2(\Om)} . 
 \]
Let $\widetilde{p}'$ be such that $\frac{1}{\widetilde{p}}+\frac{1}{\widetilde{p}'}=1$ with the convention that $\widetilde{p}'=1$ if $\widetilde{p}=+\infty$. By applying Young's inequality we see that (for a generic constant $C$)
 \begin{align*}
  |\cJ''(\bar{z})[h_0,h_1]| 
   &\le C\|h_0\|_{L^2(\Om)} \|h_1\|_{L^2(\Om)}
   \le \frac{\delta}{4} \|h_0\|^2_{L^2(\Om)}  + C \|h_1\|_{L^2(\Om)}^2 \\
   &\le \frac{\delta}{4} \|h_0\|^2_{L^2(\Om)} + 
           C \|h_1\|_{L^{\widetilde{p}'}(\Om)} \|h_1\|_{L^{\widetilde{p}}(\Om)} \\
   &\le \frac{\delta}{4} \|h_0\|^2_{L^2(\Om)} + 
           C_1 \varrho \|h_1\|_{L^{\widetilde{p}'}(\Om)} ,        
 \end{align*} 
where in the last step we have used that 
$\|h_1\|_{L^{\widetilde{p}}(\Om)} \le \varrho$. Similarly, we have that 
 \[
  \Big|\frac12 \cJ''(\bar{z}) h_1^2\Big| \le C \|h_1\|^2_{L^2(\Om)}
   \le C_2 \varrho \|h_1\|_{L^{\widetilde{p}'}(\Om)}.
 \]
After summing the above inequalities we obtain that 
 \begin{align*}
  \frac12 \cJ''(\bar{z}) (h_0+h_1)^2 &\ge \frac{\delta}{2} \|h_0\|^2_{L^2(\Om)}
   - \Big( \frac{\delta}{4}\|h_0\|^2_{L^2(\Om)} + (C_1+C_2)\varrho \|h_1\|_{L^{\widetilde{p}'}(\Om)}\Big) \\
   &\ge \frac{\delta}{4} \|h_0\|^2_{L^2(\Om)} - (C_1+C_2)\varrho \|h_1\|_{L^{\widetilde{p}'}(A_\tau)}   ,  
 \end{align*} 
where in the last step we have also used that $h_1 = 0$ on $\Om\setminus A_\tau$. 
Substituting this in \eqref{eq:sop} and rearranging the  terms we obtain that 
 \begin{equation}\label{eq:bbbbb}
  \cJ(z) \ge \cJ(\bar{z}) 
   + \tau \|h\|_{L^1(A_\tau)} - (C_1+C_2)\varrho \|h\|_{L^{\widetilde{p}'}(A_\tau)}
             + \frac{\delta}{4} \|h\|^2_{L^2(\Om\setminus A_\tau)}  
             - \varrho L \|h\|^2_{L^2(\Om)} ,
 \end{equation}
where we have used that $\|h_1\|_{L^{\widetilde{p}'}(A_\tau)} = \|h\|_{L^{\widetilde{p}'}(A_\tau)}$ and $\|h_0\|_{L^2(\Om)} = \|h\|_{L^2(\Om\setminus A_\tau)}$. 
Now we consider two cases.

{\bf Case 1. $\widetilde{p} = +\infty$.} Then
$\widetilde{p}' = 1$. Choosing $\varrho$ small enough in \eqref{eq:bbbbb} so that 
$(C_1+C_2)\varrho \le \frac{\tau}2$ we obtain that 
 \begin{align*}
  \cJ(z) \ge \cJ(\bar{z}) 
   + \frac{\tau}{2} \|h\|_{L^1(A_\tau)}
             + \frac{\delta}{4} \|h\|^2_{L^2(\Om\setminus A_\tau)}     
             - \varrho L \|h\|^2_{L^2(\Om)} \\
         \ge \cJ(\bar{z}) 
   + \frac{\tau}{2} \|h\|_{L^2(A_\tau)}^2
             + \frac{\delta}{4} \|h\|^2_{L^2(\Om\setminus A_\tau)}     
             - \varrho L \|h\|^2_{L^2(\Om)}  ,  
 \end{align*}
where we have assumed that $\varrho \le 1$ (without loss of generality) which implies that $h(x)^2 \le h(x)$ for a.e. $x\in\Omega$ and thus $\|h\|_{L^1(A_\tau)}\ge \|h\|_{L^2(A_\tau)}^2$. Thus, we obtain
 \begin{align*}
  \cJ(z)
         &\ge \cJ(\bar{z}) 
   + \min\Big\{ \frac{\tau}{2},\frac{\delta}{4}\Big\}  \|h\|_{L^2(\Om)}^2
             - \varrho L \|h\|^2_{L^2(\Om)}    \\
         &\ge \cJ(\bar{z})  + \frac12 \min\Big\{ \frac{\tau}{2},\frac{\delta}{4}\Big\} 
          \|h\|_{L^2(\Om)}^2    ,
 \end{align*}
where in the last step we have set $\varrho = \frac{1}{2L} \min\Big\{ \frac{\tau}{2},\frac{\delta}{4}\Big\}$ which can be made small enough since $L$ can be made large enough. We have shown \eqref{eq:quad_grow}.

{\bf Case 2. $\widetilde{p} = 2$.} 
Then $\widetilde{p}' = 2$. 
We notice that there is no $2$-norm discrepancy for us in this case, thus the situation is similar to \cite[Theorem 4.29]{pfefferer2014numerical}. 
We shall use the results in \cite[Theorem~2.3 and Theorem~2.7]{MR2902693}. 

    \begin{enumerate}[(i)]
        \item
              Firstly, we notice that 
              $\mathcal{J}: L^2(\Omega) \rightarrow  \mathbb{R}$ is of class $C^2$. 
              Moreover, $\mathcal{J}'(z)$ and $\mathcal{J}''(z)$ can be extended to linear 
              and bilinear forms, respectively, on $L^2(\Omega)$.

        \item 
              Let $\{(z_k,\zeta_k)\}_{k} \in 
              Z_{ad} \times L^2(\Omega)$ be a sequence such that $\|z_k-z\|_{L^2(\Omega)} \rightarrow 0$
              and $\zeta_k \rightharpoonup \zeta$ weakly in $L^2(\Omega)$ as $k\to\infty$. We claim that 
              \begin{align}
                  &\mathcal{J}'(z) \zeta  = \lim_{k\rightarrow\infty} \mathcal{J}'(z_k)\zeta_k, 
                      \label{eq:5101} \\
                  &\mathcal{J}''(z) [\zeta,\zeta] 
                      \le \liminf_{k\rightarrow\infty} \mathcal{J}''(z_k)[\zeta_k,\zeta_k] ,
                              \label{eq:5102}\\
                  &\mbox{if } \zeta = 0, \mbox{ then} 
                          \quad C \liminf_{k\rightarrow\infty} \|\zeta_k\|^2_{L^2(\Omega)} 
                          \le \liminf_{k\rightarrow\infty} \mathcal{J}''(z_k) [\zeta_k,\zeta_k] , \label{eq:5103} 
              \end{align}      
              holds for some constant $C > 0$.

              Indeed, since $\{z_k\}_k$ converges to $z$
              in $L^2(\Omega)$ as $k\to\infty$, therefore using Lemma~\ref{lem:S2d} and the continuity of $S$, we get that $S(z_k) \rightarrow S(z)$
              in $\mathbb{H}^s(\Omega)\cap L^\infty(\Omega)$ as $k\to\infty$. Using this property in the adjoint equation \eqref{eq:adj},
              we can deduce that $\phi(z_k) \rightarrow \phi(z)$ in $\mathbb{H}^s(\Omega)\cap L^\infty(\Omega)$ as $k\to\infty$.  Next, since $(\phi(z_k)+\mu z_k)\to (\phi(z)+\mu z)$ in $L^2(\Omega)$ and $\zeta_k \rightharpoonup \zeta$ weakly in $L^2(\Omega)$, as $k\to\infty$, we have that $(\phi(z_k)+\mu z_k)\zeta_k\rightharpoonup (\phi(z)+\mu z)\zeta$ in $L^1(\Omega)$ as $k\to\infty$.
              Then from the expression of $\mathcal{J}'(z)\zeta$ in 
              Lemma~\ref{lem:JFdiff}, we can deduce that
              \[\lim_{k\to\infty}\mathcal{J}'(z_k)\zeta_k = \lim_{k\to\infty}\int_\Omega \left( \phi(z_k) + \mu z_k \right) \zeta_k\, dx
             = \int_\Omega \left( \phi(z) + \mu z \right) \zeta\, dx = \mathcal{J}'(z)\zeta,\]
               and we have shown
              \eqref{eq:5101}.

              From the expression of 
              $\mathcal{J}''$ in Lemma~\ref{lem:JFdiff}, we have that
              \begin{align}\label{EX-J}
              \mathcal{J}''(z_k)[\zeta_k,\zeta_k] 
                 = \int_\Omega\left( |S'(z_k)\zeta_k|^2  
                  -\phi(z_k) f_{uu}(x,S(z_k))|S'(z_k)\zeta_k|^2\right) \;dx
                  + \mu \int_\Omega \zeta_k^2\;dx.
              \end{align}
              Since $S'(z_k)\zeta_k$ is the unique weak solution of \eqref{eq:uv} with right-hand side $\zeta_k$, and  $\zeta_k \rightharpoonup \zeta$ weakly in $L^2(\Omega)$ as $k\to\infty$, hence the sequence $\{\zeta_k\}_{k\in\NN}$ is bounded, we have that $S'(z_k)\zeta_k$ is bounded in $\mathbb H^s(\Omega)$. Thus, after a subsequence if necessary, we have that $S'(z_k)\zeta_k$ converges weakly to $S'(z)\zeta$ in $\mathbb H^s(\Om)$ and strongly in $L^2(\Omega)$ (since the embedding $\mathbb H^s(\Omega)\hookrightarrow L^2(\Omega)$ is compact) as $k\to\infty$. Using similar arguments together with the properties of $f_{uu}$ and  $\phi$ we can deduce that    
              \[\lim_{k\to\infty}\int_{\Omega}\left(\phi(z_k) f_{uu}(x,S(z_k))|S'(z_k)\zeta_k|^2\right)\;dx=\int_{\Omega}\left( \phi(z) f_{uu}(x,S(z))|S'(z)\zeta|^2\right)\;dx.\]
             Now taking the limit as $k\to\infty$ of \eqref{EX-J} and using the above convergences, we get
              \begin{align*}
                 \liminf_{k\rightarrow\infty}\mathcal{J}''(z_k)[\zeta_k,\zeta_k] 
                 &= \lim_{k\rightarrow\infty}\int_\Omega\left( |S'(z_k)\zeta_k|^2  
                  -\phi(z_k) f_{uu}(x,S(z_k))S'(z_k)\zeta_k S'(z_k)\zeta_k\right) \;dx
                  + \mu \liminf_{k\rightarrow\infty} \int_\Omega \zeta_k^2\;dx \\
                 &\ge \int_\Omega\left( |S'(z)\zeta|^2  
                  -\phi(z) f_{uu}(x,S(z))S'(z)\zeta S'(z)\zeta \right)\;dx 
                  + \mu \int_\Omega \zeta^2\;dx = \mathcal{J}''(z)[\zeta,\zeta],
              \end{align*}            
              where in the last integral, we have used the weak lower semicontinuity
              of the $L^2(\Omega)$-norm.  We have shown \eqref{eq:5102}.

              It remains to show \eqref{eq:5103}. Since $\zeta = 0$, then all the integral 
              terms except the last one in the expression of $\mathcal{J}''(z_k)[\zeta_k,\zeta_k]$ vanish and 
              \eqref{eq:5103} follows with $C = \mu$.

        \item
              The relation between $C_\tau(\bar{z})$ in \eqref{cone1} and $C_{\bar{z}}$ in 
              \cite[ Theorem~2.3]{MR2902693}: In case $z_a,z_b$ are constants this 
              is discussed in \cite[Pages 273--274]{MR2902693} 
              which can be extended to the case when $z_a$ and $z_b$ are bounded functions.
              
    \end{enumerate}    
 Now the estimate \eqref{eq:quad_grow} follows from the above properties by applying \cite[Theorem~2.3 and Theorem~2.7]{MR2902693}. The proof is finished.
\end{proof}

\begin{remark}\label{rem:int_oc1}
{\rm  We mention that all the results in this section, except the $H^{2s}$-elliptic regularity result in Lemma \ref{adjoint:exist}, hold for the map $\widetilde S$ by replacing in all the statements and proofs, $(L_D)^s$ by $(-\Delta)_D^s$. For $(-\Delta)_D^s$ only local maximal elliptic regularity can be achieved. More precisely, for the integral fractional Laplacian, we have the following situation concerning the solution $\phi$ of the corresponding adjoint equation.
\begin{itemize}
\item By \cite{Grub} if $0<s<\frac 12$ and $\Omega$ is smooth, then $\phi\in H^{2s}(\Omega)$.
\item If $\frac 12\le s<1$, an example has been given in \cite{Ro-Sj} where $\Omega$ is smooth but $\phi\not\in H^{2s}(\Omega)$.
\end{itemize}
In general, that is, for all $0<s<1$ and an aribitrary bounded open set, one can only achieve a local elliptic maximal regularity, that is, $\phi$ always belongs to $H_{\rm loc}^{2s}(\Omega)$ (see e.g. \cite{BWZ} for more details).}
\end{remark}

\noindent
{\bf Acknowledgement}:
The authors thank both reviewers for their careful reading  of the 
manuscript and for their useful comments that helped to improve the quality of the final version of the paper.

\bibliographystyle{plain}

\bibliography{lit}

\end{document}